\newtheorem{theo}{Theorem}[section]
\newtheorem{cor}[theo]{Corollary}
\newtheorem{prop}[theo]{Proposition}
\newtheorem{lemma}[theo]{Lemma}
\newtheorem{hypothesis}[theo]{Hypothesis}
\theoremstyle{remark}
\newtheorem{remark}[theo]{Remark}
\theoremstyle{definition}
\newtheorem{defi}[theo]{Definition}
\numberwithin{equation}{section}
\def \Prim {\operatorname{Prim}}
\def \tr {\operatorname{tr}}
\def \Frob {\operatorname{Frob}}
\def \sign {\operatorname{sgn}}
\def \univ {\operatorname{univ}}
\def \Sym {\operatorname{Sym}}
\def \Witt {\operatorname{Witt}}
\def \det {\operatorname{det}}
\def \Hom {\operatorname{Hom}}
\def \rp {r}
\def \rm {{\tilde{r}}}
\begin{document}

\title[A representation theory approach to integral moments]{A representation theory approach to integral moments of $L$-functions over function fields}
\author{Will Sawin}

\maketitle

\begin{abstract}We propose a new heuristic approach to integral moments of $L$-functions over function fields, which we demonstrate in the case of Dirichlet characters ramified at one place (the function field analogue of the moments of the Riemann zeta function, where we think of the character $n^{it}$ as ramified at the infinite place). We represent the moment as a sum of traces of Frobenius on cohomology groups associated to irreducible representations. Conditional on a hypothesis on the vanishing of some of these cohomology groups, we calculate the moments of the $L$-function and they match the predictions of the CFKRS recipe \cite{CFKRS}. 

In this case, the decomposition into irreducible representations seems to separate the main term and error term, which are mixed together in the long sums obtained from the approximate functional equation, even when it is dyadically decomposed. This makes our heuristic statement relatively simple, once the geometric background is set up.  We hope that this will clarify the situation in more difficult cases like the $L$-functions of quadratic Dirichlet characters to squarefree modulus. There is also some hope for a geometric proof of this cohomological hypothesis, which would resolve the moment problem for these $L$-functions in the large degree limit over function fields.
\end{abstract}

\section{Introduction}

The Conrey-Farmer-Keating-Rubinstein-Snaith heuristics give precise conjectures for the distribution of special values of $L$-functions in certain families \cite{CFKRS}. They were extended to function fields in~\cite{AK}. Certain constants appearing in these predictions can be related to statistics of random matrices.

While these are conjectures in general, they are known for many families up to an error term of $O(1/\sqrt{q})$ in the function field setting (e.g.~\cite{KatzSarnak},~\cite{WVQKR},~\cite{SQRT}). This error term hides everything but the random matrix term. However, the random matrix term appears in a particularly natural way. In the function field setting, the $L$-functions are equal to characteristic polynomials of the matrices giving the action of Frobenius elements on a certain Galois representation, and these matrices are random in a precise technical sense \cite[Theorem 3.5.3]{weil-ii}.

We are not able today to prove the full conjecture of \cite{CFKRS} over function fields for any family of $L$-functions using the geometric approach initiated by~\cite{KatzSarnak}. However, we propose a middle ground. Using the machinery of \'{e}tale cohomology, and in particular the interpretation of $L$-functions via representations of monodromy groups, we will describe a new heuristic which matches the predictions of \cite{CFKRS}. However, while the heuristics of \cite{CFKRS} require multiple manipulations, that do not make sense on their own, we will make a single assumption on vanishing of cohomology groups, which could well be true. This assumption also makes predictions for other problems, such as the variance of the divisor function in short intervals.

In this paper, we describe this heuristic, and verify its relationship to \cite{CFKRS}, only for the ``short interval" family of characters:

\begin{defi} For $n$ a natural number and $\mathbb F_q$ a finite field, consider a primitive even Dirichlet character $\psi: \left( \mathbb F_q[x]/x^{n+1} \right)^\times \to \mathbb C^{\times}$. Here ``primitive" means that the character is nontrivial on elements congruent to $1$ mod $x^n$, and ``even" means that it is trivial on $\mathbb F_q^\times$. Define a function $\chi$ on monic polynomials in $\mathbb F_q[T]$ by, for $f$ monic of degree $d$, \[ \chi(f) = \psi ( f(x^{-1}) x^{d} ) .\] It is easy to see that $\chi$ depends only on the $n+1$ leading terms of $f$. Let $S_{n,q}$ be the set of characters $\chi$ arising from primitive even Dirichlet characters $\psi$ in this way. Because there are $q^{n} $ even Dirichlet characters of which $q^{n-1}$ are imprimitive, this set has cardinality $q^{n}- q^{n-1}$. 

For $\chi\in S_{n,q}$, form  the associated $L$-functions \[L(s,\chi) = \sum_{ \substack{ f\in \mathbb F_q[T] \\ \textrm{monic}}} \chi(f)  |f|^{-s}\] where $|f| = q^{\deg f}$, with functional equation \[L(s,\chi) = \epsilon_\chi  q^{ (n-1) (1/2-s) } L(1-s, \overline{\chi}) \] for a unique $\epsilon_\chi \in \mathbb C$. \end{defi}

Let $\Prim_n = \mathbb A^n - \mathbb A^{n-1}$.  Katz \cite[\S2 \& \S3]{WVQKR} defined an explicit bijection between $\Prim_n(\mathbb F_q)$ and $S_{n,q}$. We will reproduce the precise formula in Definition~\ref{katz-bijection} below, but as the details are not relevant to the big picture, we will leave it as a black box here. 

\begin{defi}\label{L-univ} Let $L_{\univ}$ be the unique lisse sheaf of rank $n-1$ on $\Prim_n$ such that for a point $y \in \Prim_n(\mathbb F_q)$ corresponding to a character $\chi$ under the correspondence of Definition \ref{katz-bijection}, we have the identity \begin{equation}\label{L-univ-function} \det( 1- q^{-s} \Frob_{q} , L_{\univ,y}) = L(s,\chi)\end{equation}  between the characteristic polynomial of Frobenius acting on the stalk of $L_{\univ}$ at $y$ and the $L$-function of $\chi$. Katz proves the existence of this lisse sheaf by an explicit construction \cite[Lemma 4.1]{WVQKR}. \end{defi}

\begin{hypothesis}\label{hypo} Let $n,\rp,\rm,w$ be natural numbers with $0 \leq w \leq n$.

Let $\mathcal F$ be an irreducible lisse $\mathbb Q_\ell$-sheaf on $\Prim_{n, \overline{\mathbb F}_q}$ that appears as a summand of \[\det(L_{\univ})^{-\rm} \otimes \bigotimes_{i=1}^{\rp+\rm} \wedge^{d_i} (L_{\univ} )\] for some $0 \leq d_1,\dots,d_{\rp+\rm} \leq n-1$, but which does not appear as a geometric summand of $L_{\univ}^{\otimes a}  \otimes L_{\univ}^{\vee \otimes b} $ for $0 \leq a,b \leq n-1$.

We say that Hypothesis $\operatorname{H}(n,\rp,\rm,w)$ is satisfied if, for all such $\mathcal F$, \[H^j_c( \Prim_{n,\overline{\mathbb F}_q}, \mathcal F)=0\] for all $j> n+w$.  \end{hypothesis}

\begin{theo}\label{main} Let $n,\rp,\rm,w$ be natural numbers with $0 \leq w \leq n$ and $\mathbb F_q$ a finite field. Assume that Hypothesis $\operatorname{H}(n,\rp,\rm,w)$ is satisfied. Assume also that $n> 2 \max(\rp,\rm)+1$ and if $n=4$ or $5$ that the characteristic of $\mathbb F_q$ is not $2$. Let $C_{\rp,\rm} = (2 + \max(\rp,\rm))^{\max(\rp,\rm)+1}$.  Let $\alpha_1,\dots,\alpha_{\rp+\rm}$ be imaginary numbers. Let $\epsilon_\chi$ be the $\epsilon$-factor of $L(s,\chi)$.   Then

\begin{equation}\ \label{eq-main-left} \frac{1}{ q^{n}- q^{n-1} } \sum_{\chi \in S_{n,q}}  \epsilon_\chi^{-\rm} \prod_{i=1}^{\rp+\rm} L(1/2- \alpha_i, \chi) \end{equation} \begin{equation}  \label{eq-main-right} =    \sum_{\substack{  S \subseteq \{1,\dots,\rp+\rm\} \\ |S|=\rm}} \prod_{i \notin S} q^{ \alpha_i (n-1)} \sum_{ \substack{ f_1,\dots, f_{\rp+\rm} \in \mathbb F_q[T] \\ \textrm{monic} \\ \prod_{i \in S} f_i / \prod_{i\notin S} f_i \in T^{\mathbb Z} }}  \prod_{i\in S} |f_i|^{ -\frac{1}{2} +\alpha_i} \prod_{i \notin S} |f_i|^{ - \frac{1}{2} - \alpha_i} \end{equation} \begin{equation}\label{eq-main-error} + O \left(  q^{\frac{w-n}{2}}  C_{\rp,\rm}^n  n^{\rp+\rm} \right) . \end{equation}

\end{theo}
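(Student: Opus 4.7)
The plan is to translate the moment into a sum of Frobenius traces on étale cohomology, decompose the resulting sheaf into irreducible monodromy-isotypic components, peel off the two cases (main vs.\ error) distinguished in Hypothesis~H, and finally recognize the remaining terms as the CFKRS diagonal.

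\textbf{Step 1 (from moments to sheaf cohomology).} Use Katz's bijection to rewrite the sum over $\chi \in S_{n,q}$ as a sum over $y \in \Prim_n(\mathbb F_q)$. Expand each $L$-factor via the characteristic-polynomial identity
\[
L(1/2-\alpha_i,\chi) \;=\; \det(1-q^{-1/2+\alpha_i}\Frob_y, L_{\univ,y}) \;=\; \sum_{d_i=0}^{n-1} (-q^{-1/2+\alpha_i})^{d_i}\,\tr\!\bigl(\Frob_y,\wedge^{d_i} L_{\univ,y}\bigr).
\]
The functional equation forces $\epsilon_\chi = \pm q^{-(n-1)/2}\det(\Frob_y,L_{\univ,y})$ (read off from the top-degree coefficient of $L(s,\chi)$), so $\epsilon_\chi^{-\rm}$ becomes a known scalar times $\tr(\Frob_y, \det(L_{\univ,y})^{-\rm})$. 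Multiplying everything together, the inner sum over $\chi$ becomes a linear combination, indexed by $(d_1,\dots,d_{\rp+\rm})\in\{0,\dots,n-1\}^{\rp+\rm}$, of $\sum_{y\in\Prim_n(\mathbb F_q)} \tr(\Frob_y, \mathcal G_{\underline d})$ where $\mathcal G_{\underline d} = \det(L_{\univ})^{-\rm} \otimes \bigotimes_i \wedge^{d_i} L_{\univ}$. Apply Grothendieck--Lefschetz to convert this to $\sum_j (-1)^j \tr(\Frob_q, H^j_c(\Prim_{n,\overline{\mathbb F}_q}, \mathcal G_{\underline d}))$.

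\textbf{Step 2 (splitting via the hypothesis).} Decompose each $\mathcal G_{\underline d}$ geometrically into irreducibles $\mathcal F$. Partition these $\mathcal F$ into two classes: those appearing as a geometric summand of some $L_{\univ}^{\otimes a}\otimes L_{\univ}^{\vee\otimes b}$ with $0\le a,b\le n-1$ (``main''), and those that do not (``error''). For the error class, Hypothesis~$\operatorname{H}(n,\rp,\rm,w)$ gives $H^j_c(\Prim_{n,\overline{\mathbb F}_q},\mathcal F)=0$ for $j>n+w$. Since $L_{\univ}$ is pure of weight $0$ (Deligne), each $\mathcal F$ is pointwise pure of weight $0$, so by Weil~II the Frobenius eigenvalues on $H^j_c(\mathcal F)$ have absolute value $\le q^{j/2}\le q^{(n+w)/2}$. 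Combining this with a uniform bound on $\dim H^j_c$ (via Euler characteristic estimates and the rank of $\mathcal G_{\underline d}$, of order $n^{\rp+\rm}$), on the multiplicities of the $\mathcal F$ (bounded in terms of $C_{\rp,\rm}$ by plethysm on $GL_{n-1}$-representations, using the hypothesis $n>2\max(\rp,\rm)+1$ to linearize Pieri-type expansions), and on the number of tuples $\underline d$, yields the error term $O(q^{(w-n)/2} C_{\rp,\rm}^n n^{\rp+\rm})$ after dividing by $|S_{n,q}|\asymp q^n$.

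\textbf{Step 3 (main term from the allowed summands).} For the main class, Katz's determination of the geometric monodromy group of $L_{\univ}$ (the relevant classical group, full under the stated characteristic hypothesis at $n=4,5$) identifies the irreducible constituents of $L_{\univ}^{\otimes a}\otimes L_{\univ}^{\vee\otimes b}$ with Schur--Weyl components. Their compactly supported cohomology on $\Prim_n$ can be computed directly; the only surviving pieces land in top degree $H^{2n}_c=\mathbb Q_\ell(-n)$ attached to the trivial constituent, contributing $q^n$ (matching the normalization by $|S_{n,q}|$). Grouping constituents by which $\rm$ of the $\rp+\rm$ indices are turned into $L_{\univ}^\vee$-factors (via $\det(L_{\univ})^{-1}\otimes\wedge^{d} L_{\univ}\cong \wedge^{n-1-d}L_{\univ}^\vee$) produces the sum over subsets $S\subseteq\{1,\dots,\rp+\rm\}$ of size $\rm$. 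The constraint that a tensor-power constituent is trivial converts, under the Dirichlet expansion of $L$ and $\overline L$, into the matching condition $\prod_{i\in S} f_i / \prod_{i\notin S} f_i \in T^{\mathbb Z}$ (a power of $T$ being the only nontrivial ramification), and the scalar $\prod_{i\notin S} q^{\alpha_i(n-1)}$ is the accumulated $\epsilon$-factor contribution. This matches the RHS exactly.

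\textbf{Main obstacle.} Step~2's control of the error is fairly mechanical once Hypothesis~H is granted, and Step~1 is a bookkeeping exercise. The real work is Step~3: isolating precisely which geometric summands of $\mathcal G_{\underline d}$ are ``allowed'', computing their $H^j_c$ (not just bounding it), and showing that the combinatorial repackaging yields exactly the CFKRS subset sum rather than merely something of the right shape. The risk is off-by-a-sign or off-by-a-twist errors propagating through the $\epsilon$-factor and the $\det(L_{\univ})^{-1}\otimes\wedge^d\cong\wedge^{n-1-d}(-)^\vee$ identifications; these need to be tracked with care to confirm that the arithmetic Frobenius weights and the Tate twists line up so that the main term has the stated algebraic form.
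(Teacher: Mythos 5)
Your proposal captures the broad outline (Grothendieck--Lefschetz, split by Hypothesis H, bound error via Weil II + Betti bounds) but misses or misstates several ingredients that are load-bearing in the paper's actual argument, and your Step~3 would not work as written.

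\textbf{The Vandermonde determinant is essential, not optional.} The paper does not estimate $F(\mathcal G_{\underline d})$ for $\mathcal G_{\underline d}=\det(L_{\univ})^{-\rm}\otimes\bigotimes_i\wedge^{d_i}L_{\univ}$ tuple-by-tuple. Instead it multiplies both sides by $\prod_{i_1<i_2}(q^{\alpha_{i_1}}-q^{\alpha_{i_2}})$ and observes (via the Jacobi--Trudi identity, Lemmas~\ref{representation-generating-identity} and~\ref{L-representation-identity}) that the coefficient of each monomial $q^{\sum(d_i+i-1)\alpha_{\sigma(i)}}$ in the product is $\pm F\bigl(V_{d_1,\dots,d_\rp\,|\,d_{\rp+1},\dots,d_{\rp+\rm}}\bigr)$ for a single \emph{irreducible} $GL_{n-1}$-representation. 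Without this step, the coefficients involve highly reducible $\mathcal G_{\underline d}$, and there is no clean split into ``main'' and ``error'' pieces at the level of individual coefficients. Your Step~1 stops at the reducible $\mathcal G_{\underline d}$ and leaves the decomposition into irreducibles, with multiplicities, to be sorted out in Step~2; in the paper that bookkeeping is exactly what the Vandermonde avoids.

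\textbf{Step~3 is wrong as stated.} You claim the allowed constituents' cohomology can be ``computed directly'' with only $H^{2n}_c=\mathbb Q_\ell(-n)$ surviving, contributing $q^n$. That is false: the allowed sheaves have contributions in a range of degrees, and those lower-order contributions are precisely what make up the CFKRS secondary terms. The paper never computes the cohomology of the allowed sheaves geometrically. Instead, for the corresponding range of tuples $\underline d$ (those satisfying the constraint in Lemma~\ref{cancellation-agreement}) it matches coefficients \emph{arithmetically}: orthogonality of characters modulo $x^{n+1}$ and $x^n$ forces the character sum to vanish unless $\prod_{i\in S}f_i/\prod_{i\notin S}f_i\in T^{\mathbb Z}$, giving exactly the coefficient of $M_S$. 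This uses no cohomology at all. Furthermore, you omit an entire component of the argument: the ``main term'' side also has unmatched coefficients (tuples $\underline d$ outside the admissible range), and these must be bounded separately (Lemmas~\ref{Euler-bounds},~\ref{shifted-Euler-bounds} via Euler product and contour-integral estimates; then Lemmas~\ref{second-type-bound},~\ref{third-type-bound}). Without this, the comparison is incomplete.

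\textbf{Smaller issues.} The role of $n>2\max(\rp,\rm)+1$ is not to ``linearize Pieri-type expansions''; via Lemma~\ref{m-estimate} it forces the order $m$ of the monodromy of $\det L_{\univ}$ to exceed $\max(\rp,\rm)$, which eliminates all $S$ with $|S|\neq\rp$ from the subset sum. The constant $C_{\rp,\rm}$ comes from Katz's Betti number bound on $Z_{n,m_1,m_2}$ as a complete intersection (Lemma~\ref{Betti-number-bound}), combined with Lemma~\ref{representation-multiplicity-identity} to convert multiplicities into Weyl dimensions, not from a plethysm bound. Finally, the error contribution from Hypothesis~H is estimated on the coefficients of the Vandermonde-multiplied moment, and the Weyl dimension factor $\prod|d_{i_1}-d_{i_2}|/\prod(i-1)!$ that arises must be absorbed via a multiplicity identity (Lemma~\ref{representation-multiplicity-identity}); your sketch does not account for this.
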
 



\begin{remark}The parameters $n,\rp,\rm$ of Hypothesis~\ref{hypo} and Theorem \ref{main} bear a clear relationship to the moment  \eqref{eq-main-left} - indeed, $n$ determines the conductor of the characters while $\rp$ and $\rm$ determine the powers we raise the $L$-function and its $\epsilon$-factor to. The meaning of the parameter $w$ is less clear, and so we explain it here.
 
 In Hypothesis $\operatorname{H}(n,\rp,\rm,w)$, $w$ determines the width of the region where we do not assume that the cohomology groups vanish.  By Poincar\'{e} duality and Artin's affine theorem, $H^j_c( \Prim_n, \mathcal F) =0$ for $j<n$, so under Hypothesis $\operatorname{H}(n,\rp,\rm,w)$, the only possible nonvanishing cohomology groups occur when $j$ ranges from $n$ to $n+w$. In particular, the larger $w$ is, the weaker an assumption Hypothesis~\ref{hypo} is.
 
 In Theorem~\ref{main}, $w$ does not affect the moment \eqref{eq-main-left} nor the main term \eqref{eq-main-right}, but only the bound for the error term \eqref{eq-main-error}. In particular, the larger $w$ is, the larger, and thus weaker, the bound \eqref{eq-main-error}.
 
 Thus we can get by with a weaker geometric hypothesis, at the cost of a weaker analytic result. Depending on our purposes we can use the parameter $w$ in two ways - either proving a cohomology vanishing statement, finding the least value of $w$ for which it implies Hypothesis $\operatorname{H}(n,\rp,\rm,w)$, and deducing the corresponding bound, or determining a desired bound and finding the greatest value of $w$ for which Hypothesis $\operatorname{H}(n,\rp,\rm,w)$ is sufficient to prove it. \end{remark}
 
\begin{remark}\label{vandermonde-remark} It follows from Theorem~\ref{main} that the main term \eqref{eq-main-right} is finite. In other words the sum of meromorphic functions is in fact a holomorphic function on some neighborhood of the locus where $\alpha_1,\dots,\alpha_{\rp+\rm}$ are imaginary.  We can also show this more directly:

The sum \eqref{eq-main-right} is manifestly symmetric in the variables $\alpha_1,\dots,\alpha_{\rp+\rm}$, because each term is invariant under permuting the variables in $S$ and the variables outside $S$, and summing over the possibilities for $S$ makes it invariant under permuting all the variables. So if we multiply \eqref{eq-main-right} by the Vandermonde determinant $\prod_{ 1\leq i_1 < i_2 \leq \rp+\rm} (q^{\alpha_{i_1}} - q^{\alpha_{i_2}})$, it becomes antisymmetric in the variables. When we multiply each individual term by the Vandermonde determinant, or even its factor $\prod_{i_1 \in S, i_2 \notin S} (q^{\alpha_{i_1}} - q^{\alpha_{i_2}})$, they become holomorphic (Lemma \ref{Euler-bounds}) - in fact we can express them by a convergent Euler product). So the sum times the Vandermonde is holomorphic. But any antisymmetric holomorphic function of $q^{\alpha_i}$ vanishes whenever $q^{\alpha_{i_1}}= q^{\alpha_{i_2}}$ for any $i_1, i_2$ and thus is divisible by the Vandermonde determinant, so \eqref{eq-main-right} itself is holomorphic.\end{remark}

 Lemma~\ref{leading-term}, below, clarifies some of the properties of the main term \eqref{eq-main-right}. It shows that \eqref{eq-main-right} is a nonzero polynomial in $n$ if $\alpha_1 =\dots = \alpha_{\rp+\rm}= 0$ and is a (nonzero) quasiperiodic function of $n$ if $\alpha_1,\dots, \alpha_{\rp+\rm}$ are all distinct. In the polynomial case, the main term dominates the error term as long as $w < (1 - \log_{q} C_{\rp,\rm}-\epsilon ) n$ for any $\epsilon>0$, as then the error term decays exponentially with $n$. Similarly, in the quasiperiodic case, the main term will dominate the error term for most $n$ as long as $w < (1 - \log_{q} C_{\rp,\rm}-\epsilon ) n$ for any $\epsilon>0$. More information about the main term, including the calculation of the leading term of this polynomial, is contained in Lemma~\ref{leading-term}.

\begin{remark} 
\begin{enumerate}

\item We explain why \eqref{eq-main-right} is indeed the prediction of the CFKRS recipe~\cite[\S4]{CFKRS} (or~\cite[\S4.2]{AK}) for this family. This is obtained by the 5-step process (1) start with a product of shifted $L$-functions, (2) apply the ``approximate" functional equation to each term (in the function field case, an exact formula, following from polynomiality and the usual functional equation), (3) average the sign of the functional equations, (4) replace each summand by its expected value when averaged over the family, (5) extend the sums by removing limits of summation.

In step (1) we start with \eqref{eq-main-left}.
In step (2) we apply the ``approximate" functional equation \[ L(1/2-\alpha_i,\chi) = \sum_{\substack {f \in \mathbb F_q[T] \\ \textrm{monic} \\ \deg f \leq \frac{n-1}{2}  }} \chi(f) |f|^{-1/2 - \alpha_i}  + \epsilon_\chi q^{-\alpha_i (n-1) } \sum_{\substack {f \in \mathbb F_q[T] \\ \textrm{monic} \\ \deg f < \frac{n-1}{2}  }} \overline{\chi}(f) |f|^{-1/2 + \alpha_i} \]  to obtain that \eqref{eq-main-left} is

\[ \sum_{S \subseteq \{1,\dots, \rp+\rm\} } \epsilon_\chi^{\rp - |S| } \prod_{i \not \in S} q^{ -\alpha_i (n-1)} \sum_{ \substack{ f_1,\dots, f_{\rp+\rm} \in \mathbb F_q[T] \\ \textrm{monic} \\ \deg f_i \leq \frac{n-1 -1_{i\not \in S}}{2} }} \chi(\prod_{i \in S} f_i) \overline{\chi}( \prod_{i \not \in S} f_i)  \prod_{i \in S} |f_i|^{-1/2-\alpha_i} \prod_{i \not \in S} |f_i|^{-1/2+\alpha_i}.\]

In step (3) we remove the terms where $\rp\neq |S|$, as the average of the root number $\epsilon_\chi^{ \rp- |S|}$ cancels there.  (We show it cancels as part of Lemma \ref{m-estimate}.) 


In step (4) we observe that for $n$ sufficiently large, the average over $\chi \in S_{n,q}$ of $\chi(\prod_{i \in S} f_i) \overline{\chi}( \prod_{i \not \in S} f_i) $ vanishes unless $\prod_{i \in S} f_i / \prod_{i \not \in S} f_ i\in T^{\mathbb Z}$. 


In step (5) we extend the sums by removing the degree condition, getting 

\[  \sum_{\substack { S \subseteq \{1,\dots, \rp+\rm\} \\ |S| =\rp }} \prod_{i \not \in S} q^{- \alpha_i (n-1)} \sum_{ \substack{ f_1,\dots, f_{\rp+\rm} \in \mathbb F_q[T] \\ \textrm{monic} \\ \prod_{i \in S} f_i/ \prod_{i \not \in S} f_i \in T^{\mathbb Z}}}   \prod_{i \in S} |f_i|^{-1/2-\alpha_i} \prod_{i \not \in S} |f_i|^{-1/2+\alpha_i}.\]

\item We can write the error term in Theorem~\ref{main} as \[ O \left( \left(q^n\right)^{ -\frac{1}{2} + \frac{w}{n} + \frac{ (\max(\rp,\rm)+1) \log ( \max(\rp,\rm)+ 2)}{\log q}+\epsilon} \right).\]  The error term predicted by~\cite{CFKRS} is always the size of the family raised to the power $-\frac{1}{2} + \epsilon$. Our exponent approaches the predicted square-root cancellation as long as $\frac{w}{n} \to 0$ and $\frac{ (\max(\rp,\rm)+1) \log ( \max(\rp,\rm)+2)}{\log q} \to 0$. 

\end{enumerate}
\end{remark}

In fact, we are able to verify some nontrivial cases of Hypothesis~\ref{hypo}. using results from \cite{me}. More precisely, we see in Lemma~\ref{power-savings} that when $\mathbb F_q$ is a field of characteristic $p$, then Hypothesis $\operatorname{H}(n,\rp,1, n+1 - \frac{p-2\rp}{p\rp} n )$ is satisfied for any $n,\rp$. This gives the following unconditional estimate:

\begin{cor}[Corollary~\ref{combined-result}] Let $n,\rp$ be natural numbers and $\mathbb F_q$ a finite field of characteristic $p$. Assume also that $n> 2 \rp+1$ and if $n=4$ or $5$ that the characteristic of $\mathbb F_q$ is not $2$. Let $C_{\rp,1} = (2 +\rp)^{\rp+1}$.  Let $\alpha_1,\dots,\alpha_{\rp+1}$ be imaginary numbers. Let $\epsilon_\chi$ be the $\epsilon$-factor of $L(\chi)$.   Then

\[ \frac{1}{ (q^{n}- q^{n-1}) } \sum_{\chi \in S_{n,q}}  \epsilon_\chi^{-1} \prod_{i=1}^{\rp+1} L(1/2- \alpha_i, \chi)\] \[ =    \sum_{j=1}^{\rp+1} q^{-\alpha_j (n-1)} \left( \frac{1}{1- q^{-\frac{1}{2} - \alpha_j}  }\prod_{i \neq j}  \frac{ 1- q^{-1 + \alpha_i - \alpha_j}}{ (1-q^{-\frac{1}{2}+ \alpha_i}) (1-q^{\alpha_i-\alpha_j})}\right) + O \left(  \sqrt{q}  \left(  q^{ - \frac{p-2\rp}{2p\rp} } C_{\rp,1} \right) ^n  n^{r+1} \right) . \]

\end{cor}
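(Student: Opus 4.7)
The plan is to derive the Corollary as a direct specialization of Theorem~\ref{main} to $\rp=r$ and $\rm=1$, with the cohomological Hypothesis $\operatorname{H}(n,r,1,w)$ supplied unconditionally by Lemma~\ref{power-savings}. Because $\max(\rp,\rm)=r$, the conditions $n>2r+1$ and the characteristic assumption for $n\in\{4,5\}$ transfer verbatim from Theorem~\ref{main}, and Lemma~\ref{power-savings} licenses the choice $w=n+1-\frac{p-2r}{pr}n$ (for the finitely many small $n$ where this would exceed $n$, the stated error bound already dominates the trivial $O(1)$ bound on the original quantity, so one may take $w=n$ there).

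Substituting this value of $w$ into the error bound $O(q^{(w-n)/2}C_{r,1}^n n^{r+1})$ of Theorem~\ref{main} gives exponent $(w-n)/2=\tfrac{1}{2}-\tfrac{p-2r}{2pr}n$, hence
\[ q^{(w-n)/2}C_{r,1}^n n^{r+1}=\sqrt{q}\,\bigl(q^{-\frac{p-2r}{2pr}}C_{r,1}\bigr)^n n^{r+1}, \]
reproducing the stated error on the nose.

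The substance of the proof is then the explicit evaluation of the main term~\eqref{eq-main-right} in the case $\rm=1$. With $|S|=1$ the outer sum has $r+1$ terms, indexed by the unique element of $S$, which plays the role of the distinguished variable $j$. The inner sum over monic tuples $(f_1,\dots,f_{r+1})$ with $\prod_{i\in S} f_i/\prod_{i\notin S} f_i\in T^{\mathbb Z}$ factors as an Euler product over primes of $\mathbb F_q[T]$: at $P=T$ the constraint is vacuous, so the Euler factor is simply a product of $r+1$ independent geometric series of the form $(1-q^{-1/2\pm\alpha_i})^{-1}$; at each prime $P\neq T$ the constraint forces the $S$-side valuation to equal the sum of the $S^c$-side valuations, and after eliminating the determined variable the sum collapses to a single geometric series per index $i\neq j$ with ratio $|P|^{-1+\alpha_j-\alpha_i}$. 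Taking the product over all $P\neq T$ via the identity $\prod_{P\neq T}(1-|P|^{-s})^{-1}=(1-q^{-s})/(1-q^{1-s})$ (obtained from $\zeta_{\mathbb F_q[T]}(s)=(1-q^{1-s})^{-1}$ by removing the $T$-Euler factor) turns each infinite product into the closed-form expression $(1-q^{-1+\alpha_j-\alpha_i})/(1-q^{\alpha_j-\alpha_i})$. Combining these with the $T$-Euler factors and the prefactor $\prod_{i\notin S}q^{\alpha_i(n-1)}$, and then reindexing so that $j$ ranges over $\{1,\dots,r+1\}$, recovers the explicit sum displayed in the Corollary.

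The only real obstacle is the bookkeeping in this last step --- keeping straight which variable is distinguished, and matching the normalization of the prefactor $\prod_{i\notin S}q^{\alpha_i(n-1)}$ against the form $q^{-\alpha_j(n-1)}$ that appears in the Corollary. The hard input (cohomological vanishing) has been fully absorbed into Lemma~\ref{power-savings}, so no new geometric work is required.
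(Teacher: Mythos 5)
Your high-level plan is exactly the paper's: invoke Lemma~\ref{power-savings} to supply Hypothesis $\operatorname{H}(n,\rp,1,w)$ unconditionally, substitute into Theorem~\ref{main}, and then evaluate the main term in closed form; your substitution into the error term is correct and reproduces the stated bound verbatim. The gap lies entirely in the specialization of the main term. You take the constraint on the outer sum of \eqref{eq-main-right} literally as $|S|=\rm=1$, so that $S=\{j\}$ is the singleton. With that reading, the prefactor $\prod_{i\notin S}q^{\alpha_i(n-1)}$ is a product over the other $r$ variables (not a single factor $q^{\pm\alpha_j(n-1)}$), the $T$-Euler factor attached to $f_j$ is $(1-q^{-1/2+\alpha_j})^{-1}$ rather than $(1-q^{-1/2-\alpha_j})^{-1}$, and the local factors you compute have ratio $|P|^{-1+\alpha_j-\alpha_i}$, leading to $(1-q^{-1+\alpha_j-\alpha_i})/(1-q^{\alpha_j-\alpha_i})$. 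None of these match the target: the Corollary has a single-variable prefactor, the opposite $T$-Euler factor on $f_j$, and $(1-q^{-1+\alpha_i-\alpha_j})/(1-q^{\alpha_i-\alpha_j})$. For $r\geq 2$ these are genuinely different expressions, so the closing claim that ``reindexing... recovers the explicit sum displayed in the Corollary'' does not hold; the mismatch you flag as ``bookkeeping'' is a real discrepancy.

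The resolution is that the constraint $|S|=\rm$ in the statement of Theorem~\ref{main} is a typo and should read $|S|=\rp$, as one sees from the CFKRS recipe derivation in the remark (which arrives at $|S|=\rp$), from the unlabelled corollary at the end of the Conclusion section (``we need only the terms where $\rp=|S|$''), and from the paper's own proof of Corollary~\ref{combined-result}. With $|S|=\rp=r$, the singleton is $S^{c}=\{j\}$, so $f_j$ carries the exponent $-\tfrac{1}{2}-\alpha_j$, the remaining $f_i$ carry $-\tfrac{1}{2}+\alpha_i$, the prefactor collapses to a single factor $q^{\alpha_j(n-1)}$, and the eliminated valuation at each $P\neq T$ is $v_P(f_j)=\sum_{i\neq j}v_P(f_i)$, giving the ratio $|P|^{-1+\alpha_i-\alpha_j}$ and hence $(1-q^{-1+\alpha_i-\alpha_j})/(1-q^{\alpha_i-\alpha_j})$, which is what appears in the Corollary. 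Once you swap the roles of $S$ and $S^{c}$ in this way, your Euler-product argument is sound and is equivalent to the paper's (the paper factors out powers of $T$ globally and observes that $f_j$ is then uniquely determined as $\prod_{i\neq j}f_i$; your prime-by-prime computation is the same algebra done locally). You should also be aware that $\operatorname{H}(n,\rp,\rm,w)$ is only defined for $0\leq w\leq n$; when the value given by Lemma~\ref{power-savings} exceeds $n$ one should take $w=n$, which is always valid by the cohomological dimension bound, and check that the error term still absorbs the resulting bound --- a point worth one more sentence than you gave it.
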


As $p$ goes to $\infty$ with fixed $\rp$, this bound converges to a power savings of $1/2\rp$.

The main term in the case $\rp=2,\rm=2$ is also given by an explicit, though complicated, rational function in $q^{\alpha_i}$ and $q$, see Lemma~\ref{fourth-moment}.

\vspace{15pt}

We present a summary of some of the key ideas in the proof of Theorem~\ref{main}.

First note that one subtlety in Theorem~\ref{main} is that \eqref{eq-main-right} is a sum of terms that have poles at the points we are most interested in studying, and that the poles only disappear when we sum all the terms. This makes it tricky to try to prove the theorem by splitting up the terms, as this could introduce infinities. As noted in Remark~\ref{vandermonde-remark}, we can remove the poles by multiplying by a suitable Vandermonde determinant, and this helps to prove that the sum is holomorphic. The first step in our proof is a purely algebraic description of what happens when we multiply \eqref{eq-main-left} by the same Vandermonde determinant. We show that the coefficients of monomials in $q^{\alpha_1},\dots,q^{\alpha_{\rp+\rm}}$ in this product will be averages over $\chi$ of Schur functions in the zeroes of $L(s,\chi)$ corresponding to irreducible representations of $GL_{n-1}$ (Lemma~\ref{L-representation-identity}). Before multiplying by the Vandermonde determinant, the coefficients were typically characters of highly reducible representations, so the Vandermonde determinant significantly simplifies \eqref{eq-main-left} as well.

These coefficients will be crucial to proving Theorem~\ref{main}. Both \eqref{eq-main-left} and \eqref{eq-main-right} can be expressed as Laurent series in the variables $q^{\alpha_1},\dots,q^{\alpha_{\rp+\rm}}$, and they remain Laurent series when each is multiplied by the Vandermonde determinant. We will prove Theorem~\ref{main} by showing that the coefficient of each individual monomial $q^{ -\sum_i \alpha_i d_i}$ in \eqref{eq-main-left} times the Vandermonde is equal to the coefficient of the same monomial in \eqref{eq-main-right} times the Vandermonde, up to a controlled error. We think of this strategy as being analogous to, in classical moment calculations, breaking a sum over integers into dyadic intervals and handling them separately, or in function field moments calculation breaking a sum over polynomials into many sums over polynomials of fixed degree. 

Typically in moment estimates, we break up the sum into distinct ranges, and there will some ranges in in which we can show that the off-diagonal terms cancel using only orthogonality of characters. The tuples $(d_1,\dots, d_{\rp+\rm})$ for which orthogonality of characters is enough are described by Lemma~\ref{cancellation-agreement}. Using this fact, we match some of the terms from \eqref{eq-main-left} with some of the terms to \eqref{eq-main-right}. We will then individually bound all the unmatched terms.

%
%

We bound the unmatched terms in \eqref{eq-main-left} using our cohomology vanishing hypothesis, combined with the Grothendieck-Lefschetz fixed point formula and some Betti number estimates (Lemma~\ref{Betti-number-bound}), to prove that the average over $\chi$ of Schur functions in the zeroes of $L(s,\chi)$ corresponding to irreducible representations of $GL_{n-1}$ that do not appear as a summand of $L_{\univ}^{\otimes a}  \otimes L_{\univ}^{\vee \otimes b} $ for $0 \leq a,b \leq n-1$ is small.

We bound the unmatched terms in \eqref{eq-main-right} using estimates from Lemma~\ref{shifted-Euler-bounds}. These estimates are proved by expressing the diagonal term times the Vandermonde determinant as an Euler product, controlling each term of the Euler product, deducing a region of holomorphicity for the Euler product and an upper bound near the boundary of that region, then using a contour integral to show the coefficients decay as we get further from the corner. This is in contrast to the situation before multiplying by the Vandermonde determinant, where the coefficients of many different monomials in the diagonal term are large. 

\begin{remark} We present some remarks on the hypothesis, with the first two from an analytic perspective and the remainder from a geometric perspective.

\begin{enumerate}

\item  To obtain predictions for moments, instead of Hypothesis $\operatorname{H}(n,\rp,\rm,w)$, we could make a purely analytic conjecture of square-root cancellation in the trace of the cohomology (equivalently, the sum of the Schur polynomial associated to this representation, evaluated at the roots of the $L$-function, over all primitive Dirichlet characters) for representations outside this special set.

Such a hypothesis is essentially equivalent to a uniform version of the conjecture of \cite{CFKRS} for shifted moments, as we can extract these individual coefficients by a Fourier series after multiplying by the Vandermonde determinant. However, Hypothesis $\operatorname{H}(n,\rp,\rm,w)$ would not follow directly from this unless the cohomology groups were proven to be pure.

If made uniform in $\rp,\rm$, such a hypothesis would imply conjectures for ratios and tuple correlations - presumably matching the predictions of \cite{ratios}, and therefore \cite{applications-ratios}. On the other hand, while Hypothesis $\operatorname{H}(n,\rp,\rm,w)$ can be stated uniformly in $\rp,\rm$, it would not imply a good estimate on the error term in the degree aspect unless stronger Betti number bounds than those in \S2.3 were proven.

Despite these difficulties, we have stated Hypothesis $\operatorname{H}(n,\rp,\rm,w)$ in a geometric way to motivate it as a natural statement (we would not have come up with it if it weren't for geometry) and to suggest the potential of a geometric proof.

\item We can view the averages of Schur polynomials of the roots of the $L$-function that appear in the analytic version of the hypothesis (equivalently, the functions $F(V)$ discussed below in section 2) as being a distant analogue of the exponential sums considered in the circle method, as they are the averages of characters of irreducible representations of $GL_{n-1}$ over an arithmetically natural finite set, while the exponential sums in the circle method are the averages of characters of irreducible representations of $\mathbb Z$ over an arithmetically natural finite set. In this case, the averages of characters of irreducible representations appearing in $\operatorname{std}^{\otimes a} \otimes \operatorname{std}^{\vee \otimes b}$ for $0 \leq a,b \leq n-1$ are the analogue of the major arcs, which we can calculate reasonably explicitly. The averages of characters of other irreducible representations are the analogue of the minor arcs, which we hope to prove cancel.

\item  As part of our proof, we will implicitly calculate the trace of Frobenius on the cohomology of sheaves $\mathcal F$ which do appear as a summand of $L_{\univ}^{\otimes a}  \otimes L_{\univ}^{\vee \otimes b} $ for $0 \leq a,b\leq n-1$. So our hypothesis is a version of the usual heuristic that what we cannot calculate should cancel. Of course, such a heuristic may be overly optimistic. Instead, what is interesting here is that it is a very straightforward and geometrically natural heuristic.

\item The calculations of the traces for the sheaves which do appear as a summand of $L_{\univ}^{\otimes a}  \otimes L_{\univ}^{\vee \otimes b} $ for $0 \leq a,b\leq n-1$ are closely related to Katz's calculations in \cite[\S5]{SQRT}. (In the case $N=n<p$, the sheaf $\mathcal F$ defined in \cite[\S4]{SQRT} is the restriction of $L_{\univ}$ to a hyperplane section, and essentially the same calculations as in \cite[\S5]{SQRT} can be done in this setting.) So the failure of square-root cancellation he demonstrates does not cause a problem for us, as it occurs exactly in the cases where we do not assume square-root cancellation. In fact, we show that the non-square-root terms that he observes correspond exactly to the secondary terms predicted by \cite{CFKRS}.

\item Hypothesis $\operatorname{H}(n,\rp,\rm,n-1)$ is known for every $n,\rp,\rm$ by Poincar\'{e} duality in \'{e}tale cohomology.  This gives a bound in the $q$ aspect whose error term is $O(q^{-\frac{1}{2}})$. This implies that the main term of Theorem \ref{main} must match, to within $O(q^{-\frac{1}{2}})$, the main term obtained by applying Katz's equidistribution result \cite[Theorem 1.2]{WVQKR} and performing a matrix integral. Our method in this case is simply a (more complicated) variant of the proof of Deligne's equidistribution theorem \cite[Theorem 3.5.3]{weil-ii}, which Katz uses in his proof, combined with the calculation of the matrix integral.

\item It is possible that some very strong form of Hypothesis $\operatorname{H}(n,\rp,\rm,w)$ could be true. For instance, we could take the parameter $w$ to be uniform in $n,\rp,\rm $. This would be equivalent to replacing the first condition on $\mathcal F$ by the condition that it appears as a summand of the tensor product of some tensor power of $L_{\univ}$ with some tensor power of its dual. Conceivably the uniform constant could be as low as $w=2$. However, it is likely to be easier to prove weaker special cases first, which is why we have stated it flexibly using multiple parameters.
\end{enumerate}

\end{remark}

\begin{remark}

We present some remarks on possible generalizations. We first discuss families that are harmonic in the sense of \cite{SST}, and then geometric families.

\begin{enumerate}

\item We expect that these results can be generalized to at least some families with orthogonal and symplectic symmetry type. The simplest cases for our method are probably the families of Dirichlet characters studied by Katz in \cite{WVQRW}, where both orthogonal and symplectic examples are given.  One simply replaces the Vandermonde determinant with, for the $r$th moment in the orthogonal case, \begin{equation}\label{orthogonal-factor} \prod_{1\leq i_1 <i_2 \leq r} (q^{\alpha_{i_1} } - q^{\alpha_{i_2}}) (q^{\alpha_{i_1} }  q^{\alpha_{i_2}}- 1)\end{equation} or, for the $r$th moment in the symplectic case, \begin{equation}\label{symplectic-factor}\prod_{1 \leq i \leq r} (q^{2\alpha_i} -1) \prod_{1\leq i_1 <i_2 \leq r} (q^{\alpha_{i_1} } - q^{\alpha_{i_2}}) (q^{\alpha_{i_1} }  q^{\alpha_{i_2}}- 1).\end{equation} The hypothesis needed then has to do with the cohomology of sheaves generated from the universal sheaves constructed by Katz in that paper.

These formulas arise from the algebra of the orthogonal and symplectic group respectively, and thus which one to use should depend only on the symmetry type of the $L$-function. Specifically, they can be calculated by attempting to repeat the proof of Lemma \ref{representation-generating-identity} in the orthogonal or symplectic case. One starts with the orthogonal or symplectic Jacobi-Trudi identity, which relates the irreducible representations of the group to the determinant of a matrix whose entries are wedge powers of the standard representation \cite[(24.25), Corollary 24.35, Corollary 24.45]{FH}. Using this, it is straightforward to express a multivariable power series whose coefficients are irreducible representations of the group as the determinant of a fixed Vandermonde-like matrix times a multivariable power series whose entries are tensor products of wedge powers of the standard representation. This matrix determinant can be evaluated by the product formula \eqref{orthogonal-factor} or \eqref{symplectic-factor}.

Alternately, one can use the identities \cite[Lemma 4 on p. 249 and Lemma 5 on. p. 257]{BumpGamburd} which express the product of $L$-functions as a sum of Schur functions of the zeroes associated to irreducible representations of the appropriate symplectic or orthogonal group times Schur functions of the variables $q^{\alpha_i}$ associated to irreducible representations of the ``Howe dual" group $Sp_{2r}$ or $O_{2r}$ respectively. The appropriate replacement for the Vandermonde determinant is then the Weyl denominator for the characters of $Sp_{2r}$ or $O_{2r}$, as appropriate.

\item Similar results can be proven for moments of an $L$-function of a fixed Galois representation twisted by a varying Dirichlet character, again conditional on a cohomological hypothesis. However, the dependency on $n$ in the error term may be worse or even ineffective, as Betti number bounds are more difficult in this case. If the Galois representation is an Artin representation splitting over the function field of a curve of bounded degree and genus, it should be possible to make the dependence on $n$ an effective exponential.

\item For other harmonic families of Dirichlet characters, such as those of squarefree modulus, stating properly an analogous hypothesis seems to require the use of higher-dimensional sheaf convolution Tannakian categories, which have not yet been connected to equidistribution. If that geometric setup is handled, there should not be any major new difficulties. The fourth absolute moment for Dirichlet characters of prime modulus was studied in \cite{Tamam}.

\item For families of automorphic forms on higher-rank groups, the $q\to\infty$ equidistribution theory is not yet available, which is a precondition for our method. 

\item New difficulties present themselves in the family of all quadratic Dirichlet characters with squarefree moduli of a given degree. This family has attracted the most attention in the function field setting, beginning with \cite{HoffsteinRosen} and \cite{AK2} on the first moment. Recently, improved estimates for the first four moments were obtained in ~\cite{Ffirst,Ftwothree,fourth}. Improved estimates on the third moment were obtained in~\cite{third}, demonstrating the existence of a secondary term and thereby verifying a prediction from~\cite{DGH}.

The difficulties in applying our method to this case start with the fact that there is no range of short sums where the off-diagonal terms cancel completely. Thus, there is no set of irreducible representations close to the trivial representation in highest weight space whose contributions can be exactly computed. Furthermore, the existence of a secondary term in the cubic case suggests that even for representations very far from the trivial representation in highest weight space, the contribution does not necessarily exhibit square-root cancellation and the term does not vanish above the middle degree. However, neither of these difficulties seems insurmountable, and it is possible that the representation-theoretical and cohomological approach can separate the main term from the secondary terms and shed light, if only conjecturally, on each.

\item For general geometric families, the situation is likely similar to, but more complicated than, the situation for quadratic Dirichlet characters.

\end{enumerate}
\end{remark}

While writing this paper, the author was supported by Dr. Max R\"{o}ssler, the Walter Haefner Foundation and the ETH Z\"urich Foundation, and while finishing it, served as a Clay Research Fellow. The author would like to thank Emmanuel Kowalski, Corentin Perret-Gentil, and the two anonymous referees for helpful comments on drafts of this paper.

\section{Representation theory and algebraic geometry}

 For any $d \geq 0$, define \[\lambda_d (\chi) = q^{-d/2}  \sum_{ \substack { f \textrm{ monic} \\ \textrm{ degree } d}} \chi(f),\] so that $L(s,\chi) = \sum_{d=0}^{n-1} \lambda_d(\chi) q^{d(1/2 -s)}$. Let $\epsilon_\chi= \lambda_{n-1}(\chi)$ be the $\epsilon$-factor of $L(s,\chi)$, so that $\lambda_{n-1-d}(\chi) = \epsilon_\chi \overline{ \lambda_d(\chi)}$. By the Riemann hypothesis or more directly from the explicit formula for $\epsilon_\chi$ in terms of Gauss sums, we have $|\epsilon_\chi|=1$ for all $\chi$.

\begin{defi}\label{katz-bijection} We define a map from points of $\Prim_n(\mathbb F_q)$ to primitive characters of $\left( \mathbb F_q[x]/x^{n+1}\right)^\times$. In fact, recalling that $\Prim_n$ is $\mathbb A^n - \mathbb A^{n-1}$, we will define a map from $\mathbb A^n(\mathbb F_q)$ to characters of $\left( \mathbb F_q[x]/x^{n+1}\right)^\times$. We defer to \cite[\S2,\S3]{WVQKR} for the proof that this defines a bijection between even characters and points of $\mathbb A^n(\mathbb F_q)$, and that the primitive ones correspond to exactly the points that do not lie $\mathbb A^{n-1}$. 

Recall that for a natural number $l$, the length $l$ Witt vectors $W_{l}(\mathbb F_q)$ are a ring whose elements are $l$-tuples of elements in $\mathbb F_q$, with addition and multiplication defined by the Witt polynomials. For each $m \leq n$ prime to $l$ let $l(m,n)$ be the least natural number such that $p^{l(m,n)} m > n$ and fix an additive character $\psi_m :W_{l(m,n)}(\mathbb F_p) = \mathbb Z/p^{l(m,n)} \to \mathbb C^\times$.

We have the Artin-Hasse exponential power series \[ AH(x) = e^{ - \sum_{k =0}^{\infty} x^{p^k}/p^k}\] whose coefficients are $p$-adic integers. Given an element in $\left(\mathbb F_q[x]/x^{n+1} \right)^\times$, we can express it uniquely as  \begin{equation}\label{artin-hasse-expression} a_0 \prod_{ 1\leq m p^e \leq n, m\textrm{ prime to }p}  AH( a_{mp^e} x^{mp^e})^{1/m}\end{equation} for $a_0 \in \mathbb F_q^\times, a_1,\dots,a_n \in \mathbb F_q$. This is because $AH( a_{mp^e} X^{mp^e})^{1/m}= 1 - a_{mp^e} x^{mp^e}/m + \dots$ and so we can inductively choose each $a_{m p^e}$ to fix the coefficient of the corresponding power of $x$.

For a tuple $b_1,\dots,b_n$ in $\mathbb F_q$ defining a point of $\mathbb A^n(\mathbb F_q)$, the associated character of $\left( \mathbb F_q[x]/x^{n+1} \right)^\times$ is the one that sends \eqref{artin-hasse-expression} to \[ \sum_{ m \leq n, m \textrm { prime to } p, m\leq n} \psi_m \left( \tr_{ W_{l(m,n)}(\mathbb F_q)}^{ W_{l(m,n)}(\mathbb F_p)} \left( (a_m, a_{pm}, \dots, a_{p^{(l(m,n)-1)}m})\times (b_m, b_{pm}, \dots, b_{p^{(l(m,n)-1)}m})\right)\right) \] where the multiplication of tuples denoted by $\times$ is taken in the ring of Witt vectors.\end{defi}

We recall that $L_{\univ}$ was defined, using Definition \ref{katz-bijection}, in Definition \ref{L-univ}.

Let $m$ be the order of the geometric monodromy group of the the determinant of $L_{\univ}$. 

Let $\mu$ be $(-1)^{m (n-1)}$ times the (unique) eigenvalue of $\Frob_q$ on the $m$th power of the determinant of $L_{\univ}(1/2)$. The Tate twist, which has the effect of multiplying all eigenvalues of Frobenius on $L_{\univ}$ by $q^{-1/2}$, normalizes these eigenvalues to have absolute value $1$, so the eigenvalue on the $m$th power of the determinant will also have absolute value $1$, and thus $\mu$ will as well.

Let $R(GL_{n-1})$ be the representation ring of $GL_{n-1}$ over $\mathbb Z$.

We fix throughout an embedding $\iota: \overline{\mathbb Q}_\ell \to \mathbb C$. Using it, we will abuse notation and identify elements of $\overline{\mathbb Q}_\ell$ with their images in $\mathbb C$ under $\iota$.

Let $F$ be the unique additive group homomorphism: $R(GL_{n-1}) \to \mathbb C$ whose value on a representation $V$ is \[ F(V) = \sum_{j\in \mathbb Z}   (-1)^j   \tr(\Frob_q, H^j_c(\Prim_{n, \overline{\mathbb F}_q}, V (L_{\univ}(1/2)))).\]

\subsection{$L$-functions and irreducible representations}

In this subsection, we relate the moments of $L$-functions that will be our main object of the study to the functions $F(V)$ for irreducible representations $V$. It culminates in Lemma~\ref{L-representation-identity}, which expresses the moment of $L$-functions, times a Vandermonde determinant, as a sum of $F(V)$. To do this, we must first in Lemma \ref{lambda-F-relation} relate the $L$-function coefficients to $F(V)$, then in Lemma~\ref{representation-generating-identity} prove an identity in the representation ring that lets us reduce to irreducible representations.  The calculation of multiplicities in Lemma \ref{representation-multiplicity-identity} is proved by the same methods and will be useful later in conjunction with our Betti number bounds.

\begin{lemma}\label{lambda-F-relation}For any $\rp,\rm$, $d= (d_1,\dots,d_{\rp+\rm})$,

 \[ \sum_{\chi \in S_{n,q}} \epsilon_\chi^{-\rm}  \prod_{i=1}^{\rp+\rm}  \lambda_{d_i}(\chi) =(-1)^{\sum_{i=1}^{\rp+\rm} d_i} F\left( \det^{-\rm} \otimes \bigotimes_{i=1}^{\rp+\rm} \bigwedge^{d_i }  \right).\]

 Furthermore, $\epsilon_\chi^m = \mu$ for all primitive $\chi$. 
 
\end{lemma}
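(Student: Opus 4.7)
The first identity is essentially the Grothendieck–Lefschetz trace formula applied to a sheaf built out of wedge powers of $L_{\univ}(1/2)$. The starting observation is that by Definition~\ref{L-univ}, the $L$-function at $y \in \Prim_n(\mathbb F_q)$ is the characteristic polynomial $\det(1 - q^{-s}\Frob_q, L_{\univ,y})$. Expanding this as the alternating sum over elementary symmetric functions in the Frobenius eigenvalues gives
\[
L(s,\chi) \;=\; \sum_{d=0}^{n-1} (-q^{-s})^d \, \tr(\Frob_q, \wedge^d L_{\univ,y}),
\]
while by definition $L(s,\chi) = \sum_d \lambda_d(\chi) q^{d/2} q^{-ds}$. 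Matching coefficients in $q^{-s}$ gives the pointwise identity
\[
\lambda_d(\chi) \;=\; (-1)^d \, q^{-d/2} \tr(\Frob_q, \wedge^d L_{\univ,y}) \;=\; (-1)^d \tr\!\left(\Frob_q, \wedge^d L_{\univ}(1/2)_y\right).
\]
In particular the case $d=n-1$ records $\epsilon_\chi$ as a sign times the Frobenius scalar on the line $\det L_{\univ}(1/2)_y$, so $\epsilon_\chi^{-\rm}$ equals a sign times $\tr(\Frob_q, \det^{-\rm}(L_{\univ}(1/2))_y)$.

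Next, I would multiply these identities together. Because trace is multiplicative on tensor products of Frobenius-equivariant sheaves, the product $\epsilon_\chi^{-\rm}\prod_i \lambda_{d_i}(\chi)$ becomes a global sign times $\tr(\Frob_q, \mathcal G_y)$ where $\mathcal G = \det^{-\rm}(L_{\univ}(1/2)) \otimes \bigotimes_i \wedge^{d_i}(L_{\univ}(1/2))$. Summing over $\chi \in S_{n,q}$, which by Katz's bijection (Definition~\ref{katz-bijection}) is the same as summing over $y \in \Prim_n(\mathbb F_q)$, the Grothendieck–Lefschetz fixed point formula then yields
\[
\sum_{y \in \Prim_n(\mathbb F_q)} \tr(\Frob_q, \mathcal G_y) \;=\; \sum_j (-1)^j \tr\!\left(\Frob_q, H^j_c(\Prim_{n,\overline{\mathbb F}_q}, \mathcal G)\right),
\]
which is exactly $F(\det^{-\rm}\otimes \bigotimes_i \wedge^{d_i})$ by the definition of $F$. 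The main bookkeeping step is to verify that all the accumulated signs of the form $(-1)^d$ collapse to the advertised $(-1)^{\sum d_i}$ on the right-hand side; I expect this to be the main place where one must be careful, as the $\epsilon_\chi^{-\rm}$ factor contributes a sign $(-1)^{(n-1)\rm}$ that must be absorbed (or match the convention in the definition of $\mathcal G$ via the Tate twist).

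For the second assertion that $\epsilon_\chi^m = \mu$, I would reason as follows. By the first identity, up to the sign $(-1)^{n-1}$, the quantity $\epsilon_\chi$ is the Frobenius scalar on the geometrically lisse rank one sheaf $\det(L_{\univ}(1/2))$ at the point $y$ corresponding to $\chi$. Raising to the $m$-th power gives the Frobenius scalar on $\det(L_{\univ}(1/2))^{\otimes m}$ at $y$, multiplied by $(-1)^{m(n-1)}$. By the defining property of $m$ as the order of the geometric monodromy of $\det(L_{\univ})$, the sheaf $\det(L_{\univ})^{\otimes m}$ has trivial geometric monodromy; since $\Prim_n$ is geometrically connected, this sheaf is geometrically constant, so its Frobenius eigenvalue at any $\mathbb F_q$-point equals the unique eigenvalue of $\Frob_q$ on the global section, independent of $y$. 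Thus $\epsilon_\chi^m$ equals $(-1)^{m(n-1)}$ times that common eigenvalue, which is precisely $\mu$ by definition. The hardest conceptual step here is keeping the Tate twist and the sign $(-1)^{n-1}$ from $\lambda_{n-1}$ compatible with the definition of $\mu$, but beyond that the argument is a direct application of the principle that geometrically constant lisse sheaves on connected varieties have a single Frobenius eigenvalue.
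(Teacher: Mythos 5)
Your approach matches the paper's own proof: Grothendieck--Lefschetz turns $F(V)$ into a sum of local traces over $\Prim_n(\mathbb F_q)\leftrightarrow S_{n,q}$, the key identity $\lambda_d(\chi)=(-1)^d\tr(\Frob_{q,\chi},\wedge^d L_{\univ}(1/2))$ follows from \eqref{L-univ-function}, and multiplicativity of trace on tensor products gives the first claim; the second claim specializes this to $\det^{\otimes m}$ together with the definition of $\mu$.

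The sign concern you flag at the end of your first paragraph is genuine and, as far as I can tell, is not absorbed. Since $\epsilon_\chi=\lambda_{n-1}(\chi)=(-1)^{n-1}\tr(\Frob_{q,\chi},\det L_{\univ}(1/2))$, one gets
\[
\epsilon_\chi^{-\rm}\prod_{i=1}^{\rp+\rm}\lambda_{d_i}(\chi)=(-1)^{(n-1)\rm+\sum_i d_i}\,\tr\Bigl(\Frob_{q,\chi},\Bigl(\det^{-\rm}\otimes\bigotimes_i\wedge^{d_i}\Bigr)(L_{\univ}(1/2))\Bigr),
\]
so the constant appears to be $(-1)^{(n-1)\rm+\sum d_i}$ rather than the stated $(-1)^{\sum d_i}$. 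Note the sign does not come from the Tate twist (which is the positive real scalar $q^{-1/2}$), but from the relation $\lambda_{n-1}=(-1)^{n-1}\det$. The test case $\rp=0$, $\rm=1$, $d_1=n-1$ is instructive: the left side is $\sum_\chi\epsilon_\chi^{-1}\epsilon_\chi=|S_{n,q}|=q^n-q^{n-1}$, while the stated right side is $(-1)^{n-1}F(\det^{-1}\otimes\wedge^{n-1})=(-1)^{n-1}(q^n-q^{n-1})$, and these disagree for $n$ even. Fortunately the discrepancy is a global constant independent of the $d_i$ and does not propagate: in Lemma~\ref{first-type-bound} the identity is only invoked up to an explicit $\pm$, and the main-term matching in Proposition~\ref{main1} goes through Lemma~\ref{cancellation-agreement}, which is proved by direct orthogonality of characters without passing through this lemma. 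So your proposal is correct modulo this sign, and your instinct to single it out was right: the paper's proof elides this step by declaring the single identity for $\lambda_d$ ``sufficient'' without verifying the $\epsilon_\chi^{-\rm}$ bookkeeping.
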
 

\begin{proof}   First we observe that, by the Grothendieck-Lefschetz fixed point formula \cite[Sommes trig. (1.1.1)]{sga4h} \[ F(V) =  \sum_{\chi \in \Prim_n (\mathbb F_q) } \tr(\Frob_{q,\chi}, V (L_{\univ}(1/2) )). \] 

By construction, $\Prim_n(\mathbb F_q)$ is in bijection with $S_{n,q}$.  It is therefore sufficient to prove that \[\tr(\Frob_{q,\chi}, \wedge^d (L_{\univ}(1/2))) = (-1)^d \lambda_d(\chi).\] 

Because the trace of any matrix on $\wedge^d$ of the standard representation is $(-1)^d$ times the $d$th coefficient of the characteristic polynomial of that matrix, this follows from the fact \eqref{L-univ-function} that the characteristic polynomial of $\Frob_{q,\chi}$ acting on  $L_{\univ}$ is $L(s,\chi)$.

A special case is that $\epsilon_\chi =\lambda_{n-1}(\chi) =(-1)^{n-1}  \tr(\Frob_{q,\chi}, \det (L_{\univ}(1/2)))$. Thus
\[ \epsilon_\chi^m = (-1)^{ m(n-1)}  \tr(\Frob_{q,\chi}, \det^{\otimes m} (L_{\univ}(1/2))) = \mu \] by the definition of $\mu$.


\end{proof}

For $0 \leq d_1 \leq \dots \leq d_{\rp+\rm} \leq n-1$, let $V_{d_1,,\dots, d_{\rp+\rm}}$ be the irreducible representation of $GL_{n-1}$ associated to the conjugate partition of $d_{\rp+\rm},\dots,1$, in other words the representation whose highest weight character is $\lambda_1^{\rp+\rm} \dots \lambda_{d_1}^{\rp+\rm} \lambda_{d_1+1}^{\rm+\rm-1} \dots \lambda_{d_2}^{\rp+\rm-1} \dots \lambda_{d_{\rp+\rm}}^1 $. Let \[V_{d_1,\dots,d_\rp | d_{\rp+1},\dots,d_{\rp+\rm}} = V_{d_1,\dots, d_{\rp+\rm}} \otimes \det^{-\rm},\] so its highest weight character is $\lambda_1^\rp\dots\lambda_{d_1}^\rp \lambda_{d_1+1}^{\rp-1} \dots \lambda_{d_2}^{\rp-1} \dots \lambda_{d_\rp}  \lambda_{d_{\rp+1} +1}^{-1} \dots \lambda_{n-1}^{-\rm}$.   

\begin{lemma}\label{representation-generating-identity} 
In the ring  $ R (GL_{n-1} ) [ q^{\alpha_1},\dots, q^{\alpha_{\rp+\rm}}]$ with formal variables $q^{\alpha_1}, \dots, q^{\alpha_{\rp+\rm}}$,  \begin{equation} \label{rgi-left}  \left( \sum_{\sigma \in S_{\rp+\rm}}  \sign(\sigma) \prod_{i=1}^{\rp+\rm} q^{ (\sigma(i)-1)\alpha_i  } \right) \sum_{0 \leq d_1,\dots, d_{\rp+\rm}  \leq n-1}   q^{ \sum_{i=1}^{\rp+\rm} d_i \alpha_i  }  (-1)^{ \sum_{i=1}^{\rp+\rm} d_i + {\rp+\rm\choose 2}  }\det^{-\rm} \otimes  \bigotimes_{i=1}^{\rp+\rm} \wedge^{d_i}   \end{equation} \begin{equation} \label{rgi-right}  =    \sum_{\sigma \in S_{\rp+\rm} }  \sum_{0 \leq d_1 \leq \dots \leq d_{\rp+\rm} \leq n-1}  \sign(\sigma) q^{ \sum_{i=1}^{\rp+\rm} (d_i+ i-1 ) \alpha_{\sigma(i)}  }  (-1)^{ \sum_{i=1}^{\rp+\rm}  d_i  }  V_{d_1,\dots,d_\rp | d_{\rm+1},\dots, d_{\rp+\rm} }.\end{equation}  \end{lemma}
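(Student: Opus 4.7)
The plan is to compare the coefficient of each monomial $q^{\sum_i e_i \alpha_i}$ on the two sides of the claimed identity and to identify it, via the dual Jacobi--Trudi (N\"agelsbach--Kostka) formula in $R(GL_{n-1})$, with the character of an irreducible representation.

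First, I would distribute the alternating prefactor $\sum_\sigma \sign(\sigma)\prod_i q^{(\sigma(i)-1)\alpha_i}$ in \eqref{rgi-left} against the generating series in $(d_1,\dots,d_{\rp+\rm})$. The substitution $e_i = d_i + \sigma(i) - 1$ extracts the coefficient of $q^{\sum e_i\alpha_i}$. Because $\sum_i\sigma(i) = \binom{\rp+\rm+1}{2}$ is independent of $\sigma$, the sign $(-1)^{\sum d_i + \binom{\rp+\rm}{2}}$ depends only on $\sum_i e_i$ and can be pulled outside the sum over $\sigma$. The residual expression $\sum_\sigma \sign(\sigma) \bigotimes_i \wedge^{e_i - \sigma(i) + 1}$ is manifestly alternating in the $e_i$, so it vanishes unless the $e_i$ are pairwise distinct. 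When they are distinct, I would let $\tau \in S_{\rp+\rm}$ be the unique permutation sorting them into increasing order $m_j := e_{\tau(j)}$ and set $d_j := m_j - j + 1$; a standard reindexing then turns the sum into $\sign(\tau)\det\bigl(\wedge^{m_i - j + 1}\bigr)_{i,j}$.

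Next, I would invoke the dual Jacobi--Trudi identity: for any partition $\mu$ with $\mu_1 \le \rp+\rm$ and at most $n-1$ parts, $[V_\mu] = \det(e_{\mu'_i - i + j})_{1 \le i,j \le \rp+\rm}$ holds in $R(GL_{n-1})$, with $e_k = [\wedge^k \operatorname{std}]$. After a row-and-column reversal of $\det(\wedge^{m_i - j + 1})$ (whose two $(-1)^{\binom{\rp+\rm}{2}}$ signs cancel), the matrix entries become $e_{\mu'_i - i + j}$ with $\mu'_i = d_{\rp+\rm+1-i}$, i.e.\ $\mu' = (d_{\rp+\rm},\dots,d_1)$. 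From the highest-weight description of $V_{d_1,\dots,d_{\rp+\rm}}$, this is precisely its conjugate partition, so the determinant equals $[V_{d_1,\dots,d_{\rp+\rm}}]$; tensoring with $\det^{-\rm}$ yields $[V_{d_1,\dots,d_\rp|d_{\rp+1},\dots,d_{\rp+\rm}}]$. For the right-hand side \eqref{rgi-right}, the coefficient of $q^{\sum e_i \alpha_i}$ is extracted analogously: the exponent of $\alpha_j$ in the summand indexed by $\sigma$ is $d_{\sigma^{-1}(j)} + \sigma^{-1}(j) - 1 = m_{\sigma^{-1}(j)}$, so the strict monotonicity $m_1 < \dots < m_{\rp+\rm}$ forces $\sigma = \tau$ uniquely. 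This produces $\sign(\tau)(-1)^{\sum d_i}[V_{d_1,\dots,d_\rp|d_{\rp+1},\dots,d_{\rp+\rm}}]$ with the same $d_j = e_{\tau(j)} - j + 1$, and a parity check using $\sum_j d_j = \sum_i e_i - \binom{\rp+\rm}{2}$ reconciles the signs on both sides.

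The main obstacle is sign bookkeeping: the identity hinges on a careful interaction among the $(-1)^{\binom{\rp+\rm}{2}}$ prefactor in \eqref{rgi-left}, the signature $\sign(\tau)$ produced by sorting the $e_i$, the two row/column reversals needed to bring $\det(\wedge^{m_i-j+1})$ into standard dual Jacobi--Trudi form, and the $(-1)^{\sum d_i}$ factor in \eqref{rgi-right}. Apart from this bookkeeping---and the routine observation that dual Jacobi--Trudi is a universal polynomial identity in the $[\wedge^k]$, so it transfers from symmetric functions to $R(GL_{n-1})$---every step is a standard manipulation of characters.
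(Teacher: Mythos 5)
Your approach is essentially the paper's: extract the coefficient of each monomial $q^{\sum_i e_i\alpha_i}$, recognize the left-hand coefficient as a $\det(\wedge^{\cdot})$ and convert it to an irreducible via the dual (second) Jacobi--Trudi identity, and match against the unique surviving permutation on the right. A minor organizational difference is that the paper first uses antisymmetry to reduce immediately to strictly increasing exponents $e_i = d_i + i - 1$ with $d_1 \leq \cdots \leq d_{\rp+\rm}$, whereas you handle arbitrary $e_i$ and sort via $\tau$; the two are equivalent, but the paper's reduction avoids carrying $\sign(\tau)$ through both sides.

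There is, however, a genuine problem with the final step. You identify the sign bookkeeping as ``the main obstacle'' but then assert, without carrying it out, that ``a parity check using $\sum_j d_j = \sum_i e_i - \binom{\rp+\rm}{2}$ reconciles the signs on both sides.'' It does not, for the statement as written. Your own computation gives: left-side coefficient $(-1)^{\sum e_i}\sign(\tau)\,[V_{d_1,\dots,d_\rp|d_{\rp+1},\dots,d_{\rp+\rm}}]$; right-side coefficient $\sign(\tau)(-1)^{\sum d_j}[V_{\cdots}]$ with $(-1)^{\sum d_j} = (-1)^{\sum e_i - \binom{\rp+\rm}{2}} = (-1)^{\sum e_i}(-1)^{\binom{\rp+\rm}{2}}$. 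These disagree by $(-1)^{\binom{\rp+\rm}{2}}$. A concrete instance: take $\rp+\rm=2$ and extract the coefficient of $q^{\alpha_2}$. From \eqref{rgi-left} one gets $(q^{\alpha_2}-q^{\alpha_1})\cdot(-1)^{0+0+1}\det^{-\rm} + \cdots = -\det^{-\rm}$, while \eqref{rgi-right} gives $(+1)(-1)^0 V_{0|0} = +\det^{-\rm}$. The resolution is that \eqref{rgi-right} as printed is missing a $\binom{\rp+\rm}{2}$ in the exponent of $(-1)$; it should read $(-1)^{\sum d_i + \binom{\rp+\rm}{2}}$, which is exactly the sign the paper's own proof text computes for the right-hand coefficient. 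With that correction your argument goes through, but as submitted the parity check you invoke would fail rather than succeed, and the burden of your proof was precisely that sign.
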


\begin{proof} 
To check this, observe that \eqref{rgi-left} and \eqref{rgi-right} are antisymmetric in $\alpha_1,\dots,\alpha_{\rp+\rm}$. Hence it is sufficient to check that the coefficients of $q^{ \sum_{i=1}^{\rp+\rm} (d_i+ i-1 )\alpha_{i}  } $ in \eqref{rgi-left} and \eqref{rgi-right} agree for $ d_1\leq d_2 \leq \dots \leq d_{\rp+\rm}$.  Only the trivial permutation contributes to the coefficient of $q^{ \sum_{i=1}^{\rp+\rm} (d_i+ i-1 )\alpha_{i}  } $ in \eqref{rgi-right}, since the tuple $(d_1,\dots, d_{\rp+\rm}+\rp+\rm-1)$ is always in strictly increasing order, but applying any nontrivial permutation to that tuple will produce a tuple not in strictly increasing order. Thus the coefficient of $q^{ \sum_{i=1}^{\rp+\rm} (d_i+ i-1 )\alpha_{i}  } $ in \eqref{rgi-right} is $ (-1)^{ \sum_{i=1}^{\rp+\rm} d_i + {\rp+\rm \choose 2} } V_{d_1,\dots,d_\rp | d_{\rp+1},\dots, d_{\rp+\rm} }$.

In \eqref{rgi-left}, every permutation $\sigma$ contributes the amount \[\sign(\sigma)    (-1)^{\sum_{i=1}^{\rp+\rm} d_i + {\rp+\rm \choose 2}}\det^{-\rm} \otimes  \bigotimes_{i=1}^{\rp+\rm} \wedge^{ d_i + i -1 -( \sigma(i)-1)}   \] to this coefficient, so it suffices to check that
\[ V_{d_1,\dots,d_\rp | d_{\rp+1},\dots, d_{\rp+\rm} } =\left( \sum_{\sigma\in S_{\rp+\rm}} \sign(\sigma)\det^{-\rm} \otimes  \bigotimes_{i=1}^{\rp+\rm} \wedge^{ d_i + i-\sigma(i)}  \right)    .\]

Here we interpret wedge powers as vanishing if the power does not lie between $0$ and $n-1$. 

As $V_{d_1,\dots,d_\rp | d_{\rp+1},\dots, d_{\rp+\rm} }= \det^{-\rm} \otimes V_{d_1,\dots,d_{\rp+\rm}}$, it suffices to check that \begin{equation}\label{Jacobi-Trudi}  V_{d_1,\dots, d_{\rp+\rm} } = \sum_{\sigma \in S_{\rp+\rm}} \sign(\sigma) \bigotimes_{i=1}^{\rp+\rm} \wedge^{ d_i + i-\sigma(i)}    .\end{equation}

Observe that the right side of \eqref{Jacobi-Trudi} is the determinant of an $(\rp+\rm) \times (\rp+\rm)$ matrix whose $i,j$ entry is $\wedge^{ d_i + i-j}$. By the second Jacobi-Trudi identity for Schur functions \cite[Formula A6]{FH}, this determinant is equal to $V_{d_1,\dots,d_{\rp+\rm}}$ in the representation ring of $GL_{n-1}$. \end{proof}

\begin{lemma}\label{representation-multiplicity-identity} The multiplicity that $V_{d_1,\dots,d_\rp  | d_{\rp+1},\dots, d_{\rp+\rm} }$ appears in $\bigoplus_{ 0\leq e_1, \dots, e_{\rp+\rm} \leq n-1} \det^{-s} \otimes \bigotimes_{i=1}^{\rp+\rm} \wedge^{e_i}$  is  \[ \lim_{\alpha_1,\dots,\alpha_r \to 1}  \frac{ \sum_{\sigma \in S_{\rp+\rm}} \sign(\sigma) q^{( d_i +i-1)\alpha_\sigma(i)}}{ \prod_{1\leq i_1< i_2 \leq \rp+\rm} (q^{\alpha_{i_2}}- q^{\alpha_{i_1}})} .\]  \end{lemma}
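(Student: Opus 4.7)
The plan is to mirror the proof of Lemma~\ref{representation-generating-identity} and extract the coefficient of $V_{d_1,\dots,d_\rp|d_{\rp+1},\dots,d_{\rp+\rm}}$ from both sides of the identity \eqref{rgi-left}$=$\eqref{rgi-right}, then solve for the desired generating series of multiplicities.

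On the right side, for a fixed weakly-increasing tuple $d_1\leq\dots\leq d_{\rp+\rm}$, only the summand indexed by $d'=d$ produces this representation, so the coefficient of $V_{d|d}$ equals $(-1)^{\sum d_i}$ times the antisymmetric polynomial $\sum_\sigma \sign(\sigma)\, q^{\sum_i (d_i+i-1)\alpha_{\sigma(i)}}$; by the standard bialternant formula this factors as the Vandermonde $\prod_{i<j}(q^{\alpha_j}-q^{\alpha_i})$ times a Schur polynomial in the $q^{\alpha_i}$. On the left side, decomposing each $\det^{-\rm}\otimes\bigotimes_i \wedge^{e_i}$ into irreducibles and reading off the multiplicity $m(d,e)$ of $V_{d|d}$, the coefficient of $V_{d|d}$ factors as the same Vandermonde times $\sum_e q^{\sum e_i \alpha_i}(-1)^{\sum e_i + \binom{\rp+\rm}{2}} m(d,e)$.

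The step that lets one match the two sides cleanly is an observation about the center of $GL_{n-1}$: the central character of $V_{d|d}$ is $\sum d_i - \rm(n-1)$, while that of $\det^{-\rm}\otimes\bigotimes_i \wedge^{e_i}$ is $\sum e_i - \rm(n-1)$. Consequently $m(d,e)$ vanishes unless $\sum e_i = \sum d_i$, so on the support of $m(d,e)$ the sign $(-1)^{\sum e_i}$ is constant and equal to $(-1)^{\sum d_i}$ and may be pulled out of the sum. Cancelling the resulting common $(-1)^{\sum d_i}$ factors on both sides and dividing by the Vandermonde then gives
\[
\sum_{0\le e_1,\dots,e_{\rp+\rm}\le n-1} q^{\sum e_i \alpha_i}\, m(d,e) \;=\; \frac{\sum_{\sigma\in S_{\rp+\rm}}\sign(\sigma)\, q^{\sum_i (d_i+i-1)\alpha_{\sigma(i)}}}{\prod_{1\le i_1<i_2\le \rp+\rm}\bigl(q^{\alpha_{i_2}}-q^{\alpha_{i_1}}\bigr)}.
\]

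Passing to the indicated limit (so that $q^{\alpha_i}\to 1$) collapses the left side to $\sum_e m(d,e)$, which is exactly the total multiplicity sought, while the right side is continuous at that point because the alternating numerator is divisible by the Vandermonde denominator with a Schur polynomial as quotient. The only genuinely new input beyond Lemma~\ref{representation-generating-identity} is this central-character remark; I expect the main obstacle to be nothing more than careful bookkeeping of the $(-1)^{\binom{\rp+\rm}{2}}$ and $(-1)^{\sum d_i}$ signs, which must conspire to leave the clean expression in the statement.
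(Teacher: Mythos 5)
Your proposal is essentially the same proof as the paper's: both extract the coefficient of $V_{d_1,\dots,d_\rp|d_{\rp+1},\dots,d_{\rp+\rm}}$ from the two sides of Lemma~\ref{representation-generating-identity}, divide by the Vandermonde factor, and then pass to the limit $\alpha_i\to 1$. The single point of divergence is how the sign factors are removed. You argue via the central character of $GL_{n-1}$ that $m(d,e)$ is supported on tuples $e$ with $\sum e_i = \sum d_i$, so $(-1)^{\sum e_i}$ is constant on the support and can be pulled out; the paper instead makes the formal substitution $q^{\alpha_i}\mapsto -q^{\alpha_i}$ in the identity of Lemma~\ref{representation-generating-identity}, which absorbs the signs before the linear functional is applied. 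Both tricks are valid; your central-character observation is arguably more conceptual (it explains \emph{why} the signs are removable rather than making them disappear formally), while the paper's substitution is more mechanical but requires no additional input.

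One caveat you correctly anticipate but should not dismiss: there genuinely is a leftover $(-1)^{\binom{\rp+\rm}{2}}$ if you take the displayed form of \eqref{rgi-right} at face value. The coefficient of $q^{\sum_i(d_i+i-1)\alpha_i}$ in \eqref{rgi-right} as printed is $(-1)^{\sum d_i}V_{d|d}$ (only $\sigma=\mathrm{id}$ contributes), whereas the matching coefficient of \eqref{rgi-left} is $(-1)^{\sum d_i+\binom{\rp+\rm}{2}}\sum_\sigma\sign(\sigma)\,\det^{-\rm}\otimes\bigotimes_i\wedge^{d_i+i-\sigma(i)}$, and the in-text proof of Lemma~\ref{representation-generating-identity} itself says the right-hand coefficient is $(-1)^{\sum d_i+\binom{\rp+\rm}{2}}V_{d|d}$. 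So the exponent in the displayed \eqref{rgi-right} should evidently be $\sum d_i+\binom{\rp+\rm}{2}$, not $\sum d_i$. Once that correction is made your argument closes cleanly with no residual sign, exactly as you expected; with the displayed version of \eqref{rgi-right} your computation would produce a spurious $(-1)^{\binom{\rp+\rm}{2}}$, which is inconsistent with the manifest non-negativity of the multiplicity (as checked, for example, for $\rp=\rm=1$, $d_1=d_2=0$, $n=3$). In short: correct approach, correct structure, and the ``bookkeeping'' concern you raise is a real typo in the quoted lemma rather than a flaw in your reasoning.
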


\begin{proof} This is obtained from progressively similifying the formula of Lemma~\ref{representation-generating-identity}. First we make the substitution $q^{\alpha_i} \to -q^{\alpha_i}$, which removes some powers of $-1$.  Then we apply the linear map from $R(GL_{n-1})$ to $\mathbb Z$ that sends $V_{d_1,\dots,d_\rp  | d_{\rp+1},\dots, d_{\rp+\rm} }$ to $1$ and every other irreducible representation to $0$. This gives 
\[\left( \sum_{\sigma \in S_{\rp+\rm}}  \sign(\sigma) \prod_{i=1}^{\rp+\rm} q^{ (\sigma(i)-1)\alpha_i  } \right) \sum_{0 \leq d_1,\dots, d_{\rp+\rm}  \leq n-1}   q^{ \sum_{i=1}^{\rp+\rm} d_i \alpha_i  }  \operatorname{mult }\left( V_{d_1,\dots,d_\rp  | d_{\rp+1},\dots, d_{\rp+\rm} } , \det^{-\rm} \otimes  \bigotimes_{i=1}^{\rp+\rm} \wedge^{d_i} \right)  \] \[=   \sum_{\sigma \in S_{\rp+\rm} }  \sign(\sigma) q^{ \sum_{i=1}^{\rp+\rm} (d_i+ i-1 ) \alpha_{\sigma(i)}  }\] (where $\operatorname{mult}(V,W)$ is the multiplicity of the irreducible representation $V$ in the representation $W$).

We then divide both sides by $ \left( \sum_{\sigma \in S_{\rp+\rm}}  \sign(\sigma) \prod_{i=1}^{\rp+\rm} q^{ (\sigma(i)-1)\alpha_i  } \right) =  \prod_{1\leq i_1< i_2 \leq \rp+\rm} (q^{\alpha_{i_2}}- q^{\alpha_{i_1}}) $ and take the limit as $\alpha_1,\dots, \alpha_{r+s}$ go to $1$, obtaining
\[ \sum_{0 \leq d_1,\dots, d_{\rp+\rm}  \leq n-1}    \operatorname{mult }\left( V_{d_1,\dots,d_\rp  | d_{\rp+1},\dots, d_{\rp+\rm} } , \det^{-\rm} \otimes  \bigotimes_{i=1}^{\rp+\rm} \wedge^{d_i} \right) = \lim_{\alpha_1,\dots,\alpha_r \to 1}  \frac{ \sum_{\sigma \in S_{\rp+\rm}} \sign(\sigma) q^{( d_i +i-1)\alpha_\sigma(i)}}{ \prod_{1\leq i_1< i_2 \leq \rp+\rm} (q^{\alpha_{i_2}}- q^{\alpha_{i_1}})} \]
and finally observe that the sum of the multiplicity of $V_{d_1,\dots,d_\rp  | d_{\rp+1},\dots, d_{\rp+\rm}} $ in a sequence of representations is equal to the multiplicity of $V_{d_1,\dots,d_\rp  | d_{\rp+1},\dots, d_{\rp+\rm} }$ in the direct sum of these representations. \end{proof}

\begin{lemma}\label{L-representation-identity} We have the identity \[ \prod_{1 \leq i_1< i_2 \leq \rp+\rm} (q^{\alpha_{i_1}} - q^{\alpha_{i_2}}) \sum_{\chi \in S_{n,q} }\epsilon_\chi^{-s} \prod_{i=1}^{\rp+\rm} L(1/2- \alpha_i, \chi)  \]  \[ = \sum_{\sigma \in S_{\rp+\rm} } \sum_{0 \leq d_1 \leq \dots \leq d_{\rp+\rm} \leq n-1}  \sign(\sigma) q^{ \sum_{i=1}^{\rp+\rm} (d_i+ i-1 ) \alpha_{\sigma(i)}  }  (-1)^{ \sum_{i=1}^{\rp+\rm}  d_i    }  F\left(  V_{d_1,\dots,d_\rp | d_{\rp+1},\dots, d_{\rp+\rm } }\right ) \]  \end{lemma}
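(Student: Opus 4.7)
The approach is to combine Lemma~\ref{lambda-F-relation} with Lemma~\ref{representation-generating-identity} through a purely formal manipulation in the polynomial ring $R(GL_{n-1})[q^{\alpha_1},\dots,q^{\alpha_{\rp+\rm}}]$. The plan is to first expand the $L$-functions in terms of the coefficients $\lambda_{d_i}(\chi)$, swap the $\chi$-sum past the $d$-sum, and use Lemma~\ref{lambda-F-relation} to replace the inner $\chi$-sum by a value of $F$. After pulling $F$ outside by linearity, what remains is to multiply by the Vandermonde and recognize the resulting object inside $F$ as the left-hand side \eqref{rgi-left} of Lemma~\ref{representation-generating-identity}.

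Concretely, I would start from $L(1/2-\alpha_i,\chi) = \sum_{d_i=0}^{n-1} \lambda_{d_i}(\chi) q^{d_i \alpha_i}$, expand the product, and write
\[ \sum_{\chi \in S_{n,q}} \epsilon_\chi^{-\rm} \prod_{i=1}^{\rp+\rm} L(1/2-\alpha_i,\chi) = \sum_{0 \le d_1,\dots,d_{\rp+\rm} \le n-1} q^{\sum_i d_i \alpha_i} \sum_{\chi \in S_{n,q}} \epsilon_\chi^{-\rm} \prod_{i=1}^{\rp+\rm} \lambda_{d_i}(\chi). \]
By Lemma~\ref{lambda-F-relation} the inner $\chi$-sum is $(-1)^{\sum d_i} F\bigl(\det^{-\rm} \otimes \bigotimes_i \wedge^{d_i}\bigr)$. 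Additivity of $F$ lets me pull it past the $d$-summation, after which the whole left-hand side becomes $F$ applied to $\sum_d q^{\sum d_i \alpha_i}(-1)^{\sum d_i} \det^{-\rm} \otimes \bigotimes_i \wedge^{d_i}$.

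Second, I would multiply through by the Vandermonde. Reversing the order of factors in the Weyl denominator formula gives $\prod_{1 \le i_1 < i_2 \le \rp+\rm}(q^{\alpha_{i_1}} - q^{\alpha_{i_2}}) = (-1)^{\binom{\rp+\rm}{2}} \sum_{\sigma \in S_{\rp+\rm}} \sign(\sigma) \prod_i q^{(\sigma(i)-1)\alpha_i}$. That extra sign combines with the $(-1)^{\sum d_i}$ produced above to yield exactly the sign $(-1)^{\sum d_i + \binom{\rp+\rm}{2}}$ appearing in \eqref{rgi-left}. Thus the Vandermonde-times-left-hand-side equals $F$ applied to \eqref{rgi-left}; Lemma~\ref{representation-generating-identity} identifies this with $F$ applied to \eqref{rgi-right}; and a final invocation of the additivity of $F$ distributes it term by term, producing the claimed formula.

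The only nontrivial point is sign bookkeeping: matching the ordering convention of the Vandermonde in the statement of Lemma~\ref{L-representation-identity} against that used in Lemma~\ref{representation-generating-identity}, and confirming that the $(-1)^{\sum d_i}$ from Lemma~\ref{lambda-F-relation} absorbs correctly into the $(-1)^{\sum d_i + \binom{\rp+\rm}{2}}$ sign inside \eqref{rgi-left}. There is no analytic or geometric content at this stage: all the substantive work has already been done in Lemmas~\ref{lambda-F-relation} and~\ref{representation-generating-identity}, and this lemma is essentially their concatenation packaged with a Vandermonde multiplication.
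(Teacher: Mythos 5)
Your proposal is correct and follows essentially the same route as the paper: expand the $L$-functions via the $\lambda_{d_i}(\chi)$, swap sums and apply Lemma~\ref{lambda-F-relation} to introduce $F$, multiply by the Vandermonde factor (with the $(-1)^{\binom{\rp+\rm}{2}}$ sign from reversing the factor order), and then invoke Lemma~\ref{representation-generating-identity} together with the $\mathbb Z$-linearity of $F$. The only cosmetic difference is that you pull $F$ outside the $d$-sum before applying the representation-ring identity, whereas the paper applies the identity directly to the sum of $F$-values; these are the same step phrased differently.
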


\begin{proof}

We have \[\sum_{\chi \in S_{n,q}  } \epsilon_\chi^{-\rm} \prod_{i=1}^{\rp+\rm} L(1/2- \alpha_i, \chi)     = \sum_{\chi \in S_{n,q}}  \sum_{0 \leq d_1,\dots, d_{\rp+\rm} \leq n-1}  q^{  \sum_{i=1}^{\rp+\rm}  d_i \alpha_i  }  (-1)^{ \sum_{i=1}^{\rp+\rm}  d_i  }  \epsilon_\chi^{-\rm}  \prod_{i=1}^{\rp+\rm}  \lambda_{d_i}(\chi)   .\]

Applying Lemma \ref{lambda-F-relation}, this is 
\[  \sum_{0 \leq d_1,\dots, d_{\rp+\rm} \leq n-1}  q^{ \sum_{i=1}^{\rp+\rm}  d_i \alpha_i  }  (-1)^{ \sum_{i=1}^{\rp+\rm}  d_i  }     F\left(  \det^{-\rm}  \otimes \bigotimes_{i=1}^{\rp+\rm}  \wedge^{d_i}   \right) .\]

Now multiply by the Vandermonde factor \[\prod_{1 \leq i_1< i_2 \leq \rp+\rm} (q^{\alpha_{i_1}} - q^{\alpha_{i_2}}) =(-1)^{ {\rp+\rm \choose 2}}  \sum_{\sigma \in S_{\rp+\rm}} \sign(\sigma) \prod_{i=1}^{\rp+\rm} q^{ \alpha_i  (\sigma(i)-1)}\] to obtain \[ \left( \sum_{\sigma \in S_{\rp+\rm}}\sign(\sigma) \prod_{i=1}^{\rp+\rm} q^{ \alpha_i  (\sigma(i)-1)} \right)  \sum_{0 \leq d_1,\dots, d_{\rp+\rm} \leq n-1}  q^{ \sum_{i=1}^{\rp+\rm}  d_i \alpha_i  }  (-1)^{ \sum_{i=1}^{\rp+\rm}  d_i + {\rp+\rm\choose 2}  }     F\left(  \det^{-\rm}  \otimes \bigotimes_{i=1}^{\rp+\rm}  \wedge^{d_i}   \right) \] which by Lemma~\ref{representation-generating-identity} is \[ = \sum_{\sigma \in S_{\rp+\rm} } \sum_{0 \leq d_1 \leq \dots \leq d_{\rp+\rm} \leq n-1}  \sign(\sigma) q^{ \sum_{i=1}^{\rp+\rm} (d_i+ i-1 ) \alpha_{\sigma(i)}  }  (-1)^{ \sum_{i=1}^{\rp+\rm}  d_i   }  F\left(  V_{d_1,\dots,d_\rp | d_{\rp+1},\dots, d_{\rp+\rm } }\right ). \] \end{proof}

\subsection{Auxiliary results}

In this subsection, we prove two lemmas that will be needed to concretely interpret the conditions on $\mathcal F$ in  Hypothesis $\operatorname{H}(n,\rp,\rm,w)$. We describe which $d_1,\dots, d_{\rp+\rm}$ have $V_{d_1,\dots,d_\rp | d_{\rp+1},\dots,d_{\rp+\rm}} ( L_{\univ}(1/2))$ appear as a summand of  $L_{\univ}^{\otimes a }\otimes L_{\univ}^{\vee \otimes b}$, which turns out to depend on the constant $m$, and then we prove a lemma that gives us some control on $m$.

\begin{lemma}\label{representation-theory-characterization}For  $0 \leq d_1 \leq \dots d_{\rp+\rm} \leq n-1$ and $0 \leq k \leq \rp+\rm,$  $V_{d_1,\dots,d_k | d_{k+1},\dots,d_{\rp+\rm}} $ appears as a summand $\operatorname{std}^{\otimes a} \otimes \operatorname{std}^{\vee \otimes b}$ for some $0 \leq a, b\leq n-1$ (possibly depending on $d_1,\dots, d_{\rp+\rm})$ if and only if $\sum_{i=1}^k d_i \leq n-1$ and $\sum_{i=k+1}^{\rp+\rm} (n-1-d_i) \leq n-1$ \end{lemma}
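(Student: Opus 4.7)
The plan is to first compute the highest weight of $V_{d_1,\ldots,d_k | d_{k+1},\ldots,d_{\rp+\rm}}$ explicitly, then invoke a standard characterization of which irreducibles of $GL_{n-1}$ appear in the mixed tensor powers $\operatorname{std}^{\otimes a}\otimes \operatorname{std}^{\vee\otimes b}$.

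By definition, $V_{d_1,\ldots,d_k | d_{k+1},\ldots,d_{\rp+\rm}} = V_{d_1,\ldots,d_{\rp+\rm}}\otimes \det^{-(\rp+\rm-k)}$, and its highest weight $\mu=(\mu_1 \geq \cdots \geq \mu_{n-1})$ can be read off from the explicit character in the definition: for each $1\leq j\leq \rp+\rm$ and $d_{j-1} < i \leq d_j$ (with $d_0 := 0$) one has $\mu_i = k-j+1$, and for $d_{\rp+\rm} < i \leq n-1$ one has $\mu_i = -(\rp+\rm-k)$. In particular the strictly positive coordinates are exactly those with $i\leq d_k$, and the strictly negative coordinates are exactly those with $d_{k+1}<i\leq n-1$.

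I then use the following standard fact: an irreducible representation $V_\mu$ of $GL_{n-1}$ appears as a summand of $\operatorname{std}^{\otimes a}\otimes \operatorname{std}^{\vee\otimes b}$ for some $0\leq a,b\leq n-1$ if and only if $P(\mu) := \sum_{\mu_i>0}\mu_i \leq n-1$ and $N(\mu) := \sum_{\mu_i<0}(-\mu_i) \leq n-1$. Necessity is a weight argument: any weight of $\operatorname{std}^{\otimes a}\otimes \operatorname{std}^{\vee\otimes b}$ has the form $(c_i - c'_i)_i$ with $c_i,c'_i\geq 0$ and $\sum c_i = a$, $\sum c'_i = b$, which forces $P(\mu)\leq a$ and $N(\mu)\leq b$. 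For sufficiency, decompose $\mu=\alpha-\beta$ with $\alpha_i=\max(\mu_i,0)$ and $\beta_i=\max(-\mu_i,0)$; then $\alpha$ is a partition of $P(\mu)$ with at most $n-1$ parts, so $V_\alpha$ is a summand of $\operatorname{std}^{\otimes P(\mu)}$ by Schur--Weyl. Similarly the reversal $\bar\beta$ of $\beta$ is a partition of $N(\mu)$, and using the rule $V_\lambda^\vee = V_{(-\lambda_{n-1},\ldots,-\lambda_1)}$ one checks $V_{\bar\beta}^\vee$ has highest weight $-\beta$ and is a summand of $(\operatorname{std}^\vee)^{\otimes N(\mu)}$. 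The tensor of the two highest weight vectors in $V_\alpha \otimes V_{\bar\beta}^\vee$ has weight $\alpha+(-\beta)=\mu$ and is annihilated by every positive root vector of $\mathfrak{gl}_{n-1}$, so $V_\mu\subset V_\alpha\otimes V_{\bar\beta}^\vee$ and hence is a summand of $\operatorname{std}^{\otimes P(\mu)}\otimes \operatorname{std}^{\vee\otimes N(\mu)}$; taking $a=P(\mu)$, $b=N(\mu)$ gives what is needed.

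Finally, I compute $P(\mu)$ and $N(\mu)$ by telescoping the explicit weights from the first step:
\[ P(\mu) = \sum_{j=1}^{k}(k-j+1)(d_j-d_{j-1}) = \sum_{j=1}^{k} d_j, \]
\[ N(\mu) = \sum_{j=k+2}^{\rp+\rm}(j-k-1)(d_j-d_{j-1}) + (\rp+\rm-k)(n-1-d_{\rp+\rm}) = \sum_{j=k+1}^{\rp+\rm}(n-1-d_j). \]
The conditions $P(\mu)\leq n-1$ and $N(\mu)\leq n-1$ then match the two inequalities in the statement of the lemma. The only real potential for difficulty is keeping the indexing straight in these telescoping sums; the key conceptual input is the highest-weight-vector tensor argument in the previous paragraph, which is entirely standard.
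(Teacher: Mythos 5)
Your proof is correct and takes essentially the same approach as the paper: the necessity direction is the same weight/eigenvalue bound, and the sufficiency direction constructs $V_\mu$ by tensoring two highest weight vectors inside a tensor product of summands of $\operatorname{std}^{\otimes P(\mu)}$ and $\operatorname{std}^{\vee\otimes N(\mu)}$ (the paper uses the wedge powers $\bigotimes\wedge^{d_i}\operatorname{std}\otimes\bigotimes\wedge^{n-1-d_i}\operatorname{std}^\vee$ where you use $V_\alpha\otimes V_{\bar\beta}^\vee$, but it is the same idea).
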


\begin{proof} For the only if direction, consider the element of $GL_{n-1}$ depending on a real parameter $\lambda>1$ whose eigenvalues are $\lambda$ with multiplicity $d_k$ and $1$  with multiplicity $n-1-d_k$. Its eigenvalue on the highest weight vector of $V_{d_1,\dots,d_k | d_{k+1},\dots,d_{\rp+\rm}} $ is $\lambda^{ \sum_{i=1}^k d_i}$. On the other hand, its eigenvalues on $\operatorname{std} ^{ \otimes a} \otimes \operatorname{std}^{\vee \otimes b}$ are at most $\lambda^a$ because its eigenvalues on $\operatorname{std}$ are at most $\lambda$ and its eigenvalues on $\operatorname{std}^\vee$ are at most $1$. Similarly, the element whose eigenvalues are $1$ with multiplicity $n-1-d_{k+1}$ and $\lambda^{-1}$ with multiplicity $d_k$ acts on the highest weight vector of $V_{d_1,\dots,d_k | d_{k+1},\dots,d_{\rp+\rm}} $ with eigenvalue $\lambda^{ \sum_{i=k+1}^{\rp+\rm}( n-1-d_i)}$, but its eigenvalues on $\operatorname{std} ^{ \otimes a} \otimes \operatorname{std}^{\vee \otimes b}$  are at most $\lambda^b$.

For the if direction, we observe that the highest weight of \begin{equation}\label{standard-power-construction} \left( \bigotimes_{i=1}^k \wedge^{d_i} \operatorname{std} \right) \otimes \left( \bigotimes_{i=k+1}^{\rp+\rm} \wedge^{n-1-d_i} \operatorname{std}^\vee\right)\end{equation} equals the highest weight of $V_{d_1,\dots,d_k | d_{k+1},\dots,d_{\rp+\rm}}$, and that \eqref{standard-power-construction} is a summand of $\operatorname{std}^{\otimes \sum_{i=1}^k d_i} \otimes \operatorname{std}^{\vee \otimes \sum_{i= k+1}^{\rp+\rm} (n-1-d_i)}$. \end{proof}

\begin{lemma}\label{easy-rep-characterization} Assume $n\geq 3$, and, if $n=3$, that the characteristic of $\mathbb F_q$ is not $2$ or $5$.

For $0 \leq d_1 \leq \dots d_{\rp+\rm} \leq n-1$,  the sheaf $V_{d_1,\dots,d_\rp | d_{\rp+1},\dots,d_{\rp+\rm}} ( L_{\univ}(1/2))$ appears as a geometric summand of  $L_{\univ}^{\otimes a }\otimes L_{\univ}^{\vee \otimes b}$ for $0 \leq a,b \leq n-1$ if and only if there is some $k$ such that $\sum_{i=1}^k d_i \leq n-1$, $\sum_{i=k+1}^{\rp+\rm} (n-1-d_i )\leq n-1$, and $k \equiv \rp \mod m$ \end{lemma}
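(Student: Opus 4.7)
The strategy is to translate the question about geometric summands of sheaves into a question about restrictions of $GL_{n-1}$-representations to the geometric monodromy group $G$ of $L_{\univ}$, and then to apply Lemma~\ref{representation-theory-characterization}. The plan has two parts: first, pin down $G$ precisely; second, do a short representation-theoretic bookkeeping.

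First I would invoke Katz's determination of the geometric monodromy group of $L_{\univ}$ (this is where the hypothesis $n\geq 3$, with the listed exclusions for $n=3$, enters) to show that $G = \{g \in GL_{n-1} : \det(g)^m = 1\}$, i.e.\ the preimage of $\mu_m$ under $\det$. By definition $\det(G)$ has order $m$, and the nontrivial content is that $G$ contains $SL_{n-1}$. Under the standard dictionary between lisse sheaves and representations of the monodromy group, $V_{d_1,\dots,d_\rp | d_{\rp+1},\dots,d_{\rp+\rm}}(L_{\univ})$ appears as a geometric summand of $L_{\univ}^{\otimes a} \otimes L_{\univ}^{\vee \otimes b}$ if and only if $V_{d_1,\dots,d_\rp | d_{\rp+1},\dots,d_{\rp+\rm}}\big|_G$ appears as a summand of $(\operatorname{std}^{\otimes a}\otimes \operatorname{std}^{\vee\otimes b})\big|_G$.

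Next I would exploit the identity $V_{d_1,\dots,d_k | d_{k+1},\dots,d_{\rp+\rm}} = V_{d_1,\dots,d_\rp | d_{\rp+1},\dots,d_{\rp+\rm}} \otimes \det^{\rp-k}$, which shows that as $k$ varies (for fixed $d_1,\dots,d_{\rp+\rm}$) these $GL_{n-1}$-irreducibles differ only by powers of $\det$ and hence have a common restriction to $SL_{n-1}$. Since $GL_{n-1} = SL_{n-1}\cdot Z(GL_{n-1})$, every $GL_{n-1}$-irreducible stays irreducible when restricted to $SL_{n-1}$, hence also when restricted to the intermediate subgroup $G$. Moreover two such $GL_{n-1}$-irreducibles have isomorphic restrictions to $G$ if and only if their twisting powers of $\det$ agree modulo $m$, since $\det(G) = \mu_m$. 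Applied to our sequence, this gives $V_{d_1,\dots,d_\rp|\dots}\big|_G \cong V_{d_1,\dots,d_k|\dots}\big|_G$ iff $k \equiv \rp \pmod{m}$.

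Combining these two observations, $V_{d_1,\dots,d_\rp|\dots}\big|_G$ appears as a summand of $(\operatorname{std}^{\otimes a}\otimes \operatorname{std}^{\vee\otimes b})\big|_G$ iff the $GL_{n-1}$-decomposition of $\operatorname{std}^{\otimes a}\otimes \operatorname{std}^{\vee\otimes b}$ contains $V_{d_1,\dots,d_k|d_{k+1},\dots,d_{\rp+\rm}}$ for some $k \equiv \rp \pmod{m}$, and by Lemma~\ref{representation-theory-characterization} this is equivalent to the existence of a $k$ with $k\equiv \rp \pmod{m}$, $\sum_{i=1}^k d_i \leq n-1$, and $\sum_{i=k+1}^{\rp+\rm}(n-1-d_i)\leq n-1$. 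The main obstacle is the first step: identifying $G$ explicitly as $\det^{-1}(\mu_m)$ rather than some proper subgroup of it. This is precisely where the low-$n$ and characteristic hypotheses are consumed, and it rests on Katz's big-monodromy results rather than on any new geometry; everything after that is a transparent exercise comparing $G$- to $GL_{n-1}$-representations.
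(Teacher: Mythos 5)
Your proof is correct and takes essentially the same route as the paper: invoke Katz's big-monodromy theorem to pin down the geometric monodromy group up to the determinant, observe that two $GL_{n-1}$-irreducibles give isomorphic sheaves precisely when they differ by $\det^{jm}$, and then hand off to Lemma~\ref{representation-theory-characterization}. The only slip is a sign: from $V_{d_1,\dots,d_k \mid d_{k+1},\dots,d_{\rp+\rm}} = V_{d_1,\dots,d_{\rp+\rm}} \otimes \det^{-(\rp+\rm-k)}$ one gets $V_{d_1,\dots,d_k \mid \dots} = V_{d_1,\dots,d_\rp \mid \dots} \otimes \det^{\,k-\rp}$, not $\det^{\,\rp-k}$, though this does not affect the divisibility-by-$m$ conclusion.
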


\begin{proof} By~\cite[Theorem 7.1]{WVQKR}, under these assumptions on $n$, the monodromy group of $L_{\univ}$ contains $SL_{n-1}$. Thus two irreducible representations of $GL_{n-1}$ give isomorphic sheaves when composed with $L_{\univ}$ only if one is equal to the other twisted by an integer power of the determinant. Because $m$ is the order of the geometric monodromy group of the determinant, in fact they are isomorphic if and only if the integer is a multiple of $m$. The claim then follows from Lemma~\ref{representation-theory-characterization}.

\end{proof}

\begin{lemma}\label{m-estimate} The natural number $m$ from the beginning of this section is divisible by the largest power of $p$ that is greater than or equal to $\frac{n-1}{2}$. \end{lemma}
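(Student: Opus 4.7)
My proof plan interprets $m$ geometrically and then lower-bounds it by restricting to a carefully chosen line in $\Prim_n$. Since $\det L_{\univ}$ is a rank-one lisse sheaf, $m$ equals the order of its geometric monodromy group in $\overline{\mathbb Q}_\ell^\times$, equivalently the least positive integer such that $(\det L_{\univ})^{\otimes m}$ is geometrically constant. Because $\Prim_n$ is an open subvariety of $\mathbb A^n$ over a field of characteristic $p$, its abelianized geometric fundamental group has trivial prime-to-$p$ part (the tame fundamental group of an open subset of affine space is trivial), so $m$ is automatically a power of $p$. It therefore suffices to exhibit a locally closed curve $\iota\colon C \hookrightarrow \Prim_n$ on which the restriction $\iota^{\ast}\det L_{\univ}$ has geometric monodromy of order divisible by the target $p^k$.

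I would take $C$ to be a line obtained by fixing all Artin--Hasse parameters $b_i$ at generic values except for a single $b_j$, which is allowed to vary along an $\mathbb A^1$; the index $j=p^e m_0$ (with $m_0$ prime to $p$) will be chosen below. The resulting family of characters factors as $\chi_t = \chi_0 \cdot \eta_t$, where $\eta_t$ is the one-parameter family of Dirichlet characters supported at the $j$-th Artin--Hasse slot. Using the Gauss-sum expression $\epsilon_\chi = \lambda_{n-1}(\chi) = q^{-(n-1)/2}\sum_{f} \chi(f)$ summed over monic polynomials of degree $n-1$, one has $\epsilon_{\chi_t}$ expressed as a local Gauss sum at the place $\infty$. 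Deligne's stationary-phase formula for local epsilon factors then applies: provided $\eta_t$ is trivial on the half-conductor subgroup $1 + x^{\lceil (n+1)/2\rceil}\mathbb F_q[[x]]$, one has $\epsilon_{\chi_t}/\epsilon_{\chi_0} = \eta_t(\alpha_0)^{-1}$ for a single element $\alpha_0 \in \mathbb F_q((x))^\times$ determined by $\chi_0$. Unwinding $\eta_t(\alpha_0)$ in the Witt-vector formalism shows that it has the shape $\psi_{m_0}(P(t))$ with $P$ a Witt polynomial in $t$ whose Witt length equals $l(m_0,n)-e$.

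The choice of $j$ now balances two competing constraints: on the one hand, triviality of $\eta_t$ on $1+x^{\lceil (n+1)/2\rceil}$ (which translates, by computing the Witt product of $\eta_t$'s vector $V^e([t])$ with the Witt vector of a general element of $1+x^{\lceil(n+1)/2\rceil}$, into the inequality $p^{r_0+e}m_0 > n$ where $r_0$ is the least index with $p^{r_0}m_0 \geq \lceil(n+1)/2\rceil$); on the other hand, largeness of the Witt length $l(m_0,n)-e$. A direct arithmetic bookkeeping shows that these can be met simultaneously with $l(m_0,n)-e \geq k$, where $p^k$ is the smallest power of $p$ at least $(n-1)/2$. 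Since an Artin--Schreier--Witt sheaf of Witt length $k$ on $\mathbb A^1$ has geometric monodromy of order $p^k$ as long as its leading Witt coefficient is nonzero, and since the leading coefficient of $P$ depends on a specific Artin--Hasse coefficient of the unit $u_0 = x^{n+1}\alpha_0$ (which is nonzero for generic $\chi_0$), the restriction $\iota^{\ast}\det L_{\univ}$ has monodromy of order divisible by $p^k$, completing the lower bound.

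The main obstacle is the Witt-vector bookkeeping: carrying out the computation of $\eta_t(\alpha_0)$ explicitly enough to identify the Witt length of $P$ and to confirm nondegeneracy of its leading term. In edge cases where no $j$ in the required range simultaneously satisfies both constraints (which can occur for small $n$ or special residue characteristics), one replaces the simple form of Deligne's formula by Laumon's stationary-phase decomposition of the Gauss sum, which expresses $\epsilon_{\chi_t}$ as a product of factors each controlled by a single Witt level, and extracts the Artin--Schreier--Witt contribution from the top level directly. The low-$n$ cases excluded in the hypothesis of Theorem~\ref{main} ($n=4,5$ in characteristic $2$) may correspond precisely to where this bookkeeping breaks down.
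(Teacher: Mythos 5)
Your proposal takes a genuinely different route from the paper. The paper's proof is purely arithmetic: it exploits the identity $\epsilon_\chi^m=\mu$ with $|\mu|=1$ to rewrite $\sum_\chi \epsilon_\chi^m$ two ways, once via orthogonality of characters (giving the exact value $q^{n-1}\mu$) and once as a Kloosterman-type exponential sum over tuples $a_{i,j}$ subject to a product congruence mod $x^{n+1}$; a stationary-phase reduction of that exponential sum then produces an upper bound which contradicts $|\mu|=1$ unless a large power of $p$ divides $m$. You instead try to lower-bound $m$ geometrically by restricting $\det L_{\univ}$ to a line and analyzing the resulting Artin--Schreier--Witt sheaf via the local $\epsilon$-factor stationary-phase formula.

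There are two real problems with the proposal. First, the claim that the tame fundamental group of $\Prim_n$ is trivial (so that $m$ must be a power of $p$) is incorrect: $\Prim_n=\mathbb A^n\setminus\mathbb A^{n-1}$ is the complement of a hyperplane, hence isomorphic to $\mathbb G_m\times\mathbb A^{n-1}$, and its tame abelianized geometric fundamental group contains the full prime-to-$p$ pro-cyclic group coming from Kummer covers of the $\mathbb G_m$ factor. So the reduction to $p$-power order does not come for free from the topology of $\Prim_n$; it would have to be extracted from the explicit Artin--Schreier--Witt description of $\mathcal L_{\univ}$, which you do not do. (This error is not fatal to the strategy since the lemma only asserts divisibility, not that $m$ is a $p$-power, but it signals a misreading of the ambient space.) Second, and more seriously, the crux of your argument---the ``Witt-vector bookkeeping'' that is supposed to produce a one-parameter subfamily on which $\epsilon_{\chi_t}/\epsilon_{\chi_0}$ is an Artin--Schreier--Witt character of Witt length at least $k$ with nonvanishing leading coefficient---is exactly the nontrivial content and is left undone. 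The tension you identify between triviality of $\eta_t$ on the half-conductor subgroup and largeness of the Witt length of $P(t)$ is real, and you give no argument that the two constraints can in fact be met simultaneously with $l(m_0,n)-e\geq k$ for $p^k$ the target power; you merely assert that ``direct arithmetic bookkeeping shows'' it. Finally, the speculation that the $n=4,5$, $p=2$ exclusions in Theorem~\ref{main} are due to a breakdown in this lemma is misplaced: the lemma statement has no such exclusion, and those restrictions in the theorem come from Lemma~\ref{easy-rep-characterization}, which in turn inherits them from Katz's monodromy computation \cite[Theorem 7.1]{WVQKR}.
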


\begin{proof}Because $\mu = \epsilon_\chi^m$ for all $\chi$ and $|\epsilon_\chi|=1$, we have $|\mu|=1$. 

Fix  $\chi \in S_{n,q}$. 
There is a unique nontrivial character $\psi_{\chi}: \mathbb F_q \to \mathbb C^\times$ with $ \chi(1 + ax^n) = \psi_{\chi}(-a)$ for all $a \in \mathbb F_q$.  We have \[\epsilon_\chi  = q^{ - \frac{n-1}{2}} \sum_{a_1,\dots,a_{n-1} \in \mathbb F_q} \chi( 1+ \sum_i a_i x^i)  =q^{ -\frac{n+1}{2}} \sum_{a_1,\dots,a_n \in \mathbb F_q} \chi\left( 1+ \sum_i a_i x^i\right) \psi_\chi (a_n) \] but for $\psi \neq \psi_\chi$,  \[ q^{ -\frac{n+1}{2}} \sum_{a_1,\dots,a_n \in \mathbb F_q} \chi\left( 1+ \sum_i a_i x^i\right) \psi(a_n) =0.\] Now fixing a nontrivial character $\psi$, there are $q^{n-1}$ characters $\chi \in S_{n,q}$ with $\psi_\chi=\psi$, so \begin{equation}\label{kloosterman-monodromy-evaluation} \sum_{\chi \in S_{n,q}}  \left( q^{ -\frac{n+1}{2}} \sum_{a_1,\dots,a_n \in \mathbb F_q} \chi( 1+ \sum_i a_i x^i) \psi(a_n)  \right)^m = \sum_{ \substack{ \chi \in S_{n,q}\\ \psi_\chi=\psi} }  \epsilon_\chi^m = \sum_{ \substack{ \chi \in S_{n,q}\\  \psi_\chi=\psi} }   \mu = q^{n-1} \mu\end{equation} On the other hand, we have 
\begin{equation}\label{kloosterman-cancellation-evaluation} \sum_{\chi \in S_{n,q}}  \left( q^{ -\frac{n+1}{2}} \sum_{a_1,\dots,a_n \in \mathbb F_q} \chi( 1+ \sum_i a_i x^i) \psi(a_n)  \right)^m \end{equation} \[  =  \sum_{ \chi: \left(\mathbb F_q[T]/T^{n+1}\right)^\times / \mathbb F_q^\times \to \mathbb C^\times  }  \left( q^{ -\frac{n+1}{2}} \sum_{a_1,\dots,a_n \in \mathbb F_q} \chi( 1+ \sum_i a_i x^i) \psi(a_n)  \right)^m \]
\[ =q^{- m \left( \frac{n+1}{2} \right)}  \sum_{\substack{ a_{i,j} \in \mathbb F_q, i=1,\dots,n, j=1,\dots m}}  \sum_{ \chi: \left(\mathbb F_q[T]/T^{n+1}\right)^\times / \mathbb F_q^\times \to \mathbb C^\times  }  \chi ( \prod_{j=1}^m (1+ \sum_{i=1}^n a_{i,j} x^i) )   \psi \left( \sum_{j=1}^m a_{n,j} \right)  \] 
\[=q^{- m \left( \frac{n+1}{2} \right)}   \sum_{\substack{ a_{i,j} \in \mathbb F_q, i=1,\dots,n, j=1,\dots m\\ \prod_{j=1}^m (1+ \sum_{i=1}^n a_{i,j} x^i) \equiv 1 \mod x^{n+1} }}   q^n  \psi \left( \sum_{j=1}^m a_{n,j} \right) \]
where we extend the sum from primitive $\chi$ to all $\chi$ because $\sum_{a_n \in \mathbb F_q} \chi(1+ \sum_i a_i x^i) \psi(a_n) =0$ for imprimitive $\chi$ and then expand the product and use orthogonality of characters.

Combining Equations \eqref{kloosterman-monodromy-evaluation} with \eqref{kloosterman-cancellation-evaluation}, we have  \begin{equation}\label{kloosterman-type-sum} \sum_{\substack{ a_{i,j} \in \mathbb F_q, i=1,\dots,n, j=1,\dots m\\ \prod_{j=1}^m (1+ \sum_{i=1}^n a_{i,j} x^i) \equiv 0 \mod x^{n+1} }}    \psi \left( \sum_{j=1}^m a_{n,j} \right) =   q^{ m \frac{n+1}{2} -1 } \mu \end{equation} where the power of $q$ shows up as $q^{ m \frac{n+1}{2} + (n-1) - n}$. If $m$ is not divisible by a large power of $p$, we will derive a contradiction from $|\mu|=1$ and the upper bound we will prove for the left side of \eqref{kloosterman-type-sum}.

The left side of \eqref{kloosterman-type-sum} is a Kloosterman-type sum. By standard stationary phase analysis, inductively for $i$ from $1$ to $\lfloor \frac{n-1}{2} \rfloor$, the sum over $a_{n-i,j}$ vanishes unless $a_{i,j_1} = a_{i,j_2}$ for all $j_1,j_2$. 

If $n$ is odd, after restricting to this subset the number of terms remaining in \eqref{kloosterman-type-sum} is $q^{ (m-1) ( \frac{n+1}{2} )}$ times the number of $a_1,\dots,a_{ \frac{n-1}{2} } $ satisfying $(1+ \sum_{i=1}^{ \frac{n-1}{2} } a_i x^i)^m \equiv 1  \mod x^{  \frac{n-1}{2} +1}$. The only way the total size is $q^{ m (\frac{n+1}{2})-1} $ is if the number of such $a_1,\dots,a_{\frac{n-1}{2}}$ is $q^{\frac{n-1}{2}}$, which only happens if the largest power of $p$ dividing $m$ is at least $\frac{n-1}{2}$.

For $n$ even, we observe that when $a_{1,j_1},\dots,a_{\frac{n}{2}-1,j_2}$ for all $i \leq \frac{n}{2}-1$ and all $j_1, j_2$, then the sum over $a_{\frac{n}{2},1},\dots, a_{\frac{n}{2},m}$ is a quadratic Gauss sum in $m-1$ variables, which is nondegenerate unless $p|m$, in which case it has one-dimensional degeneracy locus. This means \eqref{kloosterman-type-sum} is at most $q^{ (m-1) \frac{n+1}{2} + \frac{1}{2} } $ times the number of $a_1,\dots,a_{\frac{n}{2}-1} $ satisfying $(1+ \sum_{i=1}^{ \frac{n}{2}-1 } a_i x^i)^m \equiv 1  \mod x^{  \frac{n}{2} }$. The only way this can be at least $q^{ m (\frac{n+1}{2})-1} $ is if the number of such $a_1,\dots, a_{\frac{n-1}{2}}$ is at least $q^{\frac{n-1}{2}}$, which only happens if the largest power of $p$ dividing $m$ is at  least $\frac{n-1}{2}$.

 \end{proof}
 
 \subsection{Betti number bounds}\label{betti-bounds}
 
 In this subsection, we bound the dimension of the cohomology groups $H^j_c ( \Prim_{n, \overline{\mathbb F}_q} , V ( L_{\univ}(1/2)))$ that occur in the definition of $F(V)$. Combined with Hypothesis $\operatorname{H}(n,\rp,\rm,w)$, this will lead to bounds on $F(V)$. This will proceed by relating these cohomology groups to cohomology groups of varieties with constant coefficients, and then estimating those using Betti number bounds due to Katz. This relation is a geometric version of the standard arithmetic argument where, using the approximate functional equation and orthogonality of characters, a moment of $L$-functions of Dirichlet characters is reduced to the count (weighted by a smooth function) of tuples of natural numbers satisfying a congruence condition.
 
 We now give a more geometric interpretation of the construction in Definition~\ref{katz-bijection}  Let the scheme $\Witt_n$ be $\prod_{ m \geq 1 \textrm{ prime to }p, m \leq n } W_{l(m,n)}$ where $W_{l(m,n)} \cong \mathbb A^{l(m,n)}$ is the scheme parameterizing length $l(m,n)$ Witt vectors. This is a product of commutative unipotent group schemes and hence is a commutative unipotent group scheme itself, isomorphic to $\mathbb A^n$. Each $\mathbb F_q$-point corresponds to an even Dirichlet character $\mathbb F_q[x]/x^{n+1} \to \mathbb C^{\times}$ by Definition~\ref{katz-bijection}, and the multiplication in the group structure corresponds to multiplication of characters.

  Katz constructs $L_{\univ}$ as $R^1 pr_{2,!}\mathcal L_{\univ}$ for a certain lisse rank one sheaf $\mathcal L_{\univ} $ on $\mathbb A^1 \times \Prim_n $, with $pr_2$ the projection onto $\Prim_n$ \cite[\S4]{WVQKR}. However, Katz's definition of $\mathcal L_{\univ}$ works equally well to construct a lisse rank one sheaf on $\mathbb A^1 \times \Witt_n$. We will also refer to this sheaf as $\mathcal  L_{\univ}$ and the projection map $\mathbb A^1 \times \Witt_n \to \Witt_n$ as $pr_2$.

 For natural numbers $m_1, m_2$, let $Z_{n, m_1,m_2}$ be the subspace of $\mathbb A^{m_1} \times \mathbb A^{m_2}$ consisting of points $(a_{1},\dots,a_{m_1} ,b_{1},\dots,b_{m_2})$ such that $\prod_{i=1}^{m_1} ( 1 - a_ix )  \equiv \prod_{i=1}^{m_2} (1-b_i x) \mod x^{n+1}$.

\begin{lemma}\label{Fourier-comparison} For every $j \in \mathbb Z$, there is an $S_{m_1}\times S_{m_2}$ -equivariant isomorphism \[ H^j_c\left( \Witt_{n, \overline{\mathbb F}_q},  \left( R (pr_2)_! \mathcal L_{\univ} \right)^{ \otimes m_1} \otimes  \left( R (pr_2)_! \mathcal L_{\univ}^\vee \right)^{\otimes m_2} \right) = H^{j-2n}_c ( Z_{n,m_1,m_2, \overline{\mathbb F}_q},\mathbb Q_\ell (-n)).  \] 
  
  \end{lemma}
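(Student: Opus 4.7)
The approach is to view $\mathcal L_{\univ}$ as a family of multiplicative character sheaves on the commutative unipotent group scheme $\Witt_n$ parametrized by $\mathbb A^1$, and then apply Fourier-style orthogonality after integrating over $\Witt_n$. By the Künneth formula for proper pushforward,
\[
\bigl(R(pr_2)_!\,\mathcal L_{\univ}\bigr)^{\otimes m_1}\otimes\bigl(R(pr_2)_!\,\mathcal L_{\univ}^\vee\bigr)^{\otimes m_2}=Rq_!\bigl(\mathcal L_{\univ}^{\boxtimes m_1}\boxtimes(\mathcal L_{\univ}^\vee)^{\boxtimes m_2}\bigr),
\]
where $q:\mathbb A^{m_1+m_2}\times\Witt_n\to\Witt_n$ is the projection along the $m_1+m_2$ copies of $\mathbb A^1$ (each carrying one copy of $\mathcal L_{\univ}$ or $\mathcal L_{\univ}^\vee$). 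Hence the left side of the lemma equals $H^j_c(\mathbb A^{m_1+m_2}\times\Witt_{n,\overline{\mathbb F}_q},\mathcal K)$ with $\mathcal K=\mathcal L_{\univ}^{\boxtimes m_1}\boxtimes(\mathcal L_{\univ}^\vee)^{\boxtimes m_2}$, and the $S_{m_1}\times S_{m_2}$-action comes transparently from permuting the $\mathbb A^1$ factors.

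Next, push forward instead along $pr_1:\mathbb A^{m_1+m_2}\times\Witt_n\to\mathbb A^{m_1+m_2}$. At a geometric point $(a_1,\dots,a_{m_1},b_1,\dots,b_{m_2})$, the restriction of $\mathcal K$ to $\Witt_n$ is the tensor product of the sheaves $\mathcal L_{\univ}|_{\{a_i\}\times\Witt_n}$ and $\mathcal L_{\univ}^\vee|_{\{b_j\}\times\Witt_n}$. Unpacking the Artin--Hasse construction of Definition~\ref{katz-bijection}, each $\mathcal L_{\univ}|_{\{a\}\times\Witt_n}$ is the multiplicative character sheaf on $\Witt_n$ whose Frobenius trace at $\chi\in\Witt_n(\mathbb F_q)$ equals $\chi(1-ax)$, so the tensor product above is the character sheaf whose trace at $\chi$ is $\chi\bigl(\prod_i(1-a_ix)/\prod_j(1-b_jx)\bigr)$. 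By the same bijection, this character sheaf is geometrically trivial precisely when $\prod_i(1-a_ix)\equiv\prod_j(1-b_jx)\pmod{x^{n+1}}$, which cuts out $Z_{n,m_1,m_2}\subset\mathbb A^{m_1+m_2}$, and otherwise is a geometrically nontrivial multiplicative character sheaf.

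The Künneth decomposition $\Witt_n=\prod_m W_{l(m,n)}$ reduces the computation of $R\Gamma_c(\Witt_{n,\overline{\mathbb F}_q},\mathcal L)$ for a multiplicative character sheaf $\mathcal L$ to the individual Witt vector factors, where the Artin--Schreier--Witt vanishing gives $R\Gamma_c=0$ whenever $\mathcal L$ is geometrically nontrivial, while the trivial character sheaf is the constant sheaf and contributes $R\Gamma_c(\mathbb A^n,\mathbb Q_\ell)=\mathbb Q_\ell(-n)[-2n]$. Consequently $R(pr_1)_!\,\mathcal K$ is concentrated in cohomological degree $2n$, where it is the extension by zero from $Z_{n,m_1,m_2}$ of $\mathbb Q_\ell(-n)$; taking $H^{j-2n}_c$ on $\mathbb A^{m_1+m_2}$ yields exactly the right side of the lemma, with the claimed $S_{m_1}\times S_{m_2}$-equivariance. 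The main subtle point is verifying that the restrictions $\mathcal L_{\univ}|_{\{a\}\times\Witt_n}$ really are \emph{multiplicative} character sheaves with respect to the group structure on $\Witt_n$, and that the Witt vector pairing of Definition~\ref{katz-bijection} translates sheaf-theoretic triviality into the congruence $\prod_i(1-a_ix)\equiv\prod_j(1-b_jx)\pmod{x^{n+1}}$; both follow from Katz's explicit construction of $\mathcal L_{\univ}$ in \cite[\S4]{WVQKR} together with the Artin--Hasse identity used to set up the bijection.
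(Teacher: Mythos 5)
Your proof follows essentially the same route as the paper's: apply K\"{u}nneth to rewrite the left side as $H^j_c$ of $\mathbb A^{m_1+m_2}\times\Witt_n$ with the external product sheaf, then push forward along $pr_1$ and identify $R(pr_1)_!\mathcal K$ with $i_!\mathbb Q_\ell[-2n](-n)$ for the closed immersion $i\colon Z_{n,m_1,m_2}\hookrightarrow\mathbb A^{m_1+m_2}$. The only stylistic difference is how the fiberwise vanishing is established off $Z$: you invoke a K\"{u}nneth reduction to the individual factors $W_{l(m,n)}$ plus Artin--Schreier--Witt vanishing, whereas the paper argues directly that the restricted rank-one sheaf $\mathcal L'$ is a summand of the Lang pushforward on $\Witt_n$ (via its trace function and Chebotarev), so its compactly supported cohomology is a summand of $\mathbb Q_\ell(-n)$ in degree $2n$ and hence vanishes unless the trace function has full sum, i.e.\ unless the character is trivial; since Artin--Schreier--Witt vanishing is itself proved via the Lang torsor, these are two renderings of the same idea.
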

  
  \begin{proof} By applying the K\"{u}nneth formula \cite[Expos\'{e} XVII, Thm. 5.4.3]{sga4-3}, \[ H^j_c\left( \Witt_{n, \overline{\mathbb F}_q},  \left( R (pr_2)_! \mathcal L_{\univ} \right)^{ \otimes m_1} \otimes  \left( R (pr_2)_! \mathcal L_{\univ}^\vee \right)^{\otimes m_2} \right)  \] \[= H^j_c \left( \mathbb A^{m_1} \times \mathbb A^{m_2} \times \Witt_{n, \overline{\mathbb F}_q},   \bigotimes_{i=1}^{m_1}  \mathcal L_{\univ} (a_i, \omega)  \otimes \bigotimes_{i=1}^{m_2}  \mathcal L_{\univ}^\vee (b_i, \omega)\right) \] where $(a_1,\dots,a_{m_1},b_1,\dots,b_{m_2} )$  are coordinates on $  \mathbb A^{m_1} \times \mathbb A^{m_2} $ and $\omega$ is a coordinate on $\Witt_n$. 
  
 Let $pr_1$ be the projection $ \mathbb A^{m_1} \times \mathbb A^{m_2} \times \Witt_{n} \to \mathbb A^{m_1} \times \mathbb A^{m_2} $ and let $i: Z_{n, m_1,m_2} \to \mathbb A^{m_1} \times \mathbb A^{m_2}$ be the closed immersion. By applying the projection formula \cite[Expos\'{e} XVII, Prop. 5.2.9]{sga4-3} to $pr_1$ on the left side and $i$ on the right, it suffices to find an isomorphism \begin{equation}\label{basic-witt-isomorphism} R (pr_1)_! \left( \bigotimes_{i=1}^{m_1}  \mathcal L_{\univ} (a_i, \omega)  \otimes \bigotimes_{i=1}^{m_2}  \mathcal L_{\univ}^\vee (b_i, \omega)  \right) \cong i_! \mathbb Q_\ell[-2n] (-n) .\end{equation}
 
 To do this, we will first check that the stalk of $R (pr_1)_! \left( \bigotimes_{i=1}^{m_1}  \mathcal L_{\univ} (a_i, \omega)  \otimes \bigotimes_{i=1}^{m_2}  \mathcal L_{\univ}^\vee (b_i, \omega)  \right) $ vanishes outside the image of $i$. To do this, by proper base change \cite[Expos\'{e} XVII, Prop. 5.2.8]{sga4-3}, it suffices to check that the compactly supported cohomology of the fiber vanishes. It even suffices to check this for finite field-valued points, as the support is constructible.  Let $(a_1,\dots,a_{m_1}, b_1,\dots,b_{m_2} )\in \mathbb A^{m_1} \times \mathbb A^{m_2} (\mathbb F_q)$ be a point over a possibly larger finite field extension $\mathbb F_q$, and let $\mathcal L'$ be the fiber of $ \bigotimes_{i=1}^{m_1} \mathcal L_{\univ} (a_i, \omega)  \otimes \bigotimes_{i=1}^{m_2}  \mathcal L_{\univ}^\vee (b_i, \omega)$ over $(a_1,\dots,a_{m_1}, b_1,\dots,b_{m_2} )$, a sheaf lisse of rank one on $\Witt_n$. Over any finite field extension of $\mathbb F_q$, the trace function of $\mathcal L_{\univ} (a_i,\omega)$ is a $\Frob_q$-invariant character of $\Witt_n(\mathbb F_q)$ evaluated at $\omega$. Hence the trace function of $\mathcal L'$ is also a $\Frob_q$-invariant character. Thus the pullback of the trace function of $\mathcal L'$ under the Lang isogeny $\Witt_n \to \Witt_n, g \mapsto \Frob_q(g) g^{-1}$ is trivial, so by Chebotarev the pullback of $\mathcal L'$ under the Lang isogeny is trivial. So $\mathcal L'$ is a summand of the pushforward of the constant sheaf by the Lang isogeny of $\Witt_n$, and thus its cohomology is a summand of the cohomology of $\Witt_n$, which is $\mathbb Q_\ell(-n)$ in degree $2n$ because $\Witt_n \cong \mathbb A^n$.  Thus if $H^*_c( \Witt_n, \mathcal L')$ is nontrivial, it is equal to $\mathbb Q_\ell(-n)$, which implies that the sum of the trace function is $q^n$, so the character of $\Witt_n(\mathbb F_q)$ induced by $(a_{1},\dots,a_{m_1}, b_1,\dots,b_{m_2})$ is trivial, which contradicts the claim that $(a_1,\dots,a_{m_1},b_1,\dots,b_{m_2} ) \notin Z_{n,m_1,m_2}$. 
 
 So in fact $R (pr_1)_! \left( \bigotimes_{i=1}^{m_1}  \mathcal L_{\univ} (a_i, \omega)  \otimes \bigotimes_{i=1}^{m_2}  \mathcal L_{\univ}^\vee (b_i, \omega)  \right) $ is supported on the image of $i$. Restricting to the inverse image under $pr_1$ of the image of $i$, the trace function of $\bigotimes_{i=1}^{m_1} \mathcal L_{\univ} (a_i, \omega)  \otimes \bigotimes_{i=1}^{m_2}  \mathcal L_{\univ}^\vee (b_i, \omega)$ is the constant function $1$, because the trace function of $\mathcal L_{\univ} (a_i, \omega) $ is by construction the evaluation of the character corresponding to $\omega$ at $( 1 - a_ix )$, so the trace function of the tensor product is the product of the values of this character at $\prod_{i=1}^{m_1} ( 1 - a_ix )  / \prod_{i=1}^{m_2} (1-b_i x)$, which is $1$ as that element is the identity by the definition of $i$.
 
Because its trace function is $1$, $\bigotimes_{i=1}^{m_1} \mathcal L_{\univ} (a_i, \omega)  \otimes \bigotimes_{i=1}^{m_2}  \mathcal L_{\univ}^\vee (b_i, \omega) \cong \mathbb Q_\ell$, giving an isomorphism \[ i^* R (pr_1)_! \left(\bigotimes_{i=1}^{m_1} \mathcal L_{\univ} (a_i, \omega)  \otimes \bigotimes_{i=1}^{m_2}  \mathcal L_{\univ}^\vee (b_i, \omega) \cong \mathbb Q_\ell \right) \cong i^* R (pr_1)_! \mathbb Q_\ell \cong i^* \mathbb Q_\ell [-2n](n)\] and thus by the support condition an isomorphism \eqref{basic-witt-isomorphism}, as desired. 
 
 \end{proof}
 
\begin{lemma}\label{witt-excision} For $0 \leq d_1,\dots,d_{\rp+\rm} \leq n-1$, there is a long exact sequence of complexes of vector spaces  \[ H^*_c \left(\Prim_{n, \overline{\mathbb F}_q},  \bigotimes_{i=1}^{\rp}  \wedge^{d_i} (L_{\univ} ) [-d_i] \otimes \bigotimes_{i=\rp+1}^{\rp+\rm} \wedge^{d_i}(L_{\univ} ^\vee)[-d_i] \right) \] 

\[\to H^*_c \left(\Witt_{n,\overline{\mathbb F}_q}, \left( R (pr_2)_! \mathcal L_{\univ} \right)^{ \otimes \sum_{i=1}^\rp d_i} \otimes  \left( R (pr_2)_! \mathcal L_{\univ}^\vee \right)^{\otimes \sum_{i=\rp+1}^{\rp+\rm} d_i } \right)^{ S_{d_1} \times \dots \times S_{d_{\rp+\rm} }}\]

\[ \to H^*_c \left(\Witt_{n-1,\overline{\mathbb F}_q}, \left( R (pr_2)_! \mathcal L_{\univ} \right)^{ \otimes \sum_{i=1}^\rp d_i} \otimes  \left( R (pr_2)_! \mathcal L_{\univ}^\vee \right)^{\otimes \sum_{i=\rp+1}^{\rp+\rm} d_i } \right)^{ S_{d_1} \times \dots \times S_{d_{\rp+\rm} }}\] where $ S_{d_1} \times \dots \times S_{d_{\rp+\rm} } \subseteq S_{\sum_{i=1}^r d_i} \times S_{\sum_{i=\rp+1}^{\rp+\rm} d_i}$ in the obvious way. \end{lemma}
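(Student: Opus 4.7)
The plan is to realize this long exact sequence as the excision sequence for the open-closed decomposition $j\colon \Prim_n \hookrightarrow \Witt_n \hookleftarrow \Witt_{n-1} \colon i$, followed by passage to $(S_{d_1}\times\dots\times S_{d_{\rp+\rm}})$-invariants. A first preparatory step is to verify that the closed complement of $\Prim_n = \mathbb A^n - \mathbb A^{n-1}$ inside $\Witt_n\cong \mathbb A^n$ is naturally identified with $\Witt_{n-1}$ in a way that matches the restriction of $\mathcal L_{\univ}$ on $\mathbb A^1\times \Witt_n$ with Katz's $\mathcal L_{\univ}$ on $\mathbb A^1\times \Witt_{n-1}$. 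On $\mathbb F_q$-points this is the statement that an imprimitive even character of $(\mathbb F_q[x]/x^{n+1})^\times$ is the same datum as an even character of $(\mathbb F_q[x]/x^n)^\times$, and it extends to a scheme-theoretic identification by inspection of the Artin--Hasse coordinates of Definition~\ref{katz-bijection}: imprimitivity is cut out by the vanishing of the coordinate associated to the index $mp^e=n$, and the remaining coordinates precisely parametrize $\Witt_{n-1}$ and carry Katz's construction of $\mathcal L_{\univ}$ to itself.

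Set $a = \sum_{i=1}^{\rp} d_i$ and $b = \sum_{i=\rp+1}^{\rp+\rm} d_i$, and put
\[ \mathcal G = \left(R(pr_2)_!\mathcal L_{\univ}\right)^{\otimes a} \otimes \left(R(pr_2)_!\mathcal L_{\univ}^\vee\right)^{\otimes b} \]
on $\Witt_{n,\overline{\mathbb F}_q}$, equipped with its permutation action by the subgroup $S_{d_1}\times\dots\times S_{d_{\rp+\rm}}$ of $S_a\times S_b$. Applying $R\Gamma_c$ to the excision triangle $j_!j^*\mathcal G \to \mathcal G \to i_*i^*\mathcal G \to [+1]$ produces a long exact sequence whose middle and right terms are $H^*_c(\Witt_{n,\overline{\mathbb F}_q}, \mathcal G)$ and $H^*_c(\Witt_{n-1,\overline{\mathbb F}_q}, i^*\mathcal G)$; by the previous paragraph $i^*\mathcal G$ is exactly the analogous tensor product constructed from the $\Witt_{n-1}$-version of $\mathcal L_{\univ}$, so these agree with the second and third terms of the claim before invariants. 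Since $\mathbb Q_\ell$-linear invariants under a finite group are exact, taking $(S_{d_1}\times\dots\times S_{d_{\rp+\rm}})$-invariants throughout preserves the long exact sequence and yields those terms on the nose.

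What remains is to identify $(j^*\mathcal G)^{S_{d_1}\times\dots\times S_{d_{\rp+\rm}}}$ on $\Prim_n$ with the complex of wedge powers appearing in the first term. On $\Prim_n$, the complex $R(pr_2)_!\mathcal L_{\univ}$ is concentrated in cohomological degree $1$ and equals $L_{\univ}[-1]$: proper base change computes its stalks as $R\Gamma_c(\mathbb A^1_{\overline{\mathbb F}_q}, \mathcal L')$ for a nontrivial rank-one Artin--Schreier--Witt type sheaf $\mathcal L'$, which vanishes outside degree $1$, and the $R^1$-piece is $L_{\univ}$ by Katz's construction; the same argument gives $R(pr_2)_!\mathcal L_{\univ}^\vee \simeq L_{\univ}^\vee[-1]$ after identifying the resulting rank-$(n-1)$ lisse sheaf with the linear dual of $L_{\univ}$. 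Therefore $j^*\mathcal G \simeq L_{\univ}^{\otimes a}[-a]\otimes L_{\univ}^{\vee\otimes b}[-b]$, and the Koszul sign rule for symmetric-group actions on shifted complexes — transposing two tensor factors each in odd cohomological degree contributes a sign $(-1)^{1\cdot 1} = -1$ — shows that the $S_d$-action on $L_{\univ}^{\otimes d}[-d]$ is the usual permutation action tensored by the sign character. Its invariants are thus the sign-isotypic piece of $L_{\univ}^{\otimes d}$, shifted by $[-d]$, which is $\wedge^{d}(L_{\univ})[-d]$, and similarly for the factors involving $L_{\univ}^\vee$. Tensoring over $i$ reproduces the first term of the statement. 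The only subtle point, and the main potential obstacle, is the careful accounting of Koszul signs together with verifying $R(pr_2)_!\mathcal L_{\univ}^\vee\simeq L_{\univ}^\vee[-1]$ on $\Prim_n$; once these are in place the excision argument goes through formally.
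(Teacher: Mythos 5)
Your proposal is correct and follows essentially the same route as the paper: both reduce the lemma to the compactly-supported excision triangle for $\Prim_n \hookrightarrow \Witt_n \hookleftarrow \Witt_{n-1}$, identify $R(pr_2)_!\mathcal L_{\univ}\simeq L_{\univ}[-1]$ (and likewise for the dual) over $\Prim_n$, and invoke the Koszul sign rule for symmetric-group invariants of shifted complexes to convert $S_d$-invariants of $(L_{\univ}[-1])^{\otimes d}$ into $\wedge^d(L_{\univ})[-d]$. The only differences are cosmetic — you spell out the exactness of invariants and the identification of the closed stratum with $\Witt_{n-1}$, which the paper treats as implicit.
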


\begin{proof} In view of the excision long exact sequence \cite[Expos\'{e} XVII, Eq (5.1.16.2)]{sga4-3}, it suffices to find an isomorphism (over $\Prim_n$) \[\hspace*{-1cm}\bigotimes_{i=1}^{\rp}  \wedge^{d_i} (L_{\univ}) [-d_i] \otimes \bigotimes_{i=\rp+1}^{\rp+\rm} \wedge^{d_i}(L_{\univ}^\vee)[-d_i] \cong  \left(  \left( R (pr_2)_! \mathcal L_{\univ} \right)^{ \otimes \sum_{i=1}^\rp d_i} \otimes  \left( R (pr_2)_! \mathcal L_{\univ}^\vee \right)^{\otimes \sum_{i=\rp+1}^{\rp+\rm} d_i } \right)^{ S_{d_1} \times \dots \times S_{d_{\rp+\rm}}}.\]

To do this, observe that by definition \[ L_{\univ} = R^1 (pr_2)_! \mathcal L_{\univ}\] and that \[L_{\univ}^\vee = R^1(pr_2)_! \mathcal L_{\univ}^\vee\] because they are lisse, irreducible, and have the same trace functions up to scaling. Note too that $\mathcal L_{\univ}$ and its dual have no higher and lower cohomology in the fibers of $pr_1$ over $\Prim_n$, so that \[L_{\univ}[-1] = R (pr_1)_! \mathcal L_{\univ}, L_{\univ}^\vee[-1]= R (pr_2)_! \mathcal L_{\univ}^\vee .\]  Because the tensor product of complexes is anticommutative in odd degrees, we have \[ (\wedge^d L_{\univ}) [d] = \Sym^d (L_{\univ}[1]) = \left( \left( L_{\univ}[1] \right)^{\otimes d} \right)^{S_d} =  \left( \left( R (pr_2)_! \mathcal L_{\univ} \right)^{\otimes d} \right)^{S_d} \] and similarly for $\mathcal L_{\univ}^\vee$.  

Tensoring these equalities for $d_i$ from $1$ to $\rp$, and the dual equalities for $d_i$ from $\rp+1$ to $\rp+\rm$, we have the desired isomorphism. \end{proof}

\begin{lemma}\label{Betti-number-bound}For $0 \leq d_1,\dots,d_{\rp+\rm} \leq n-1$, we have the Betti number bound \begin{equation}\label{eq-betti-bound} \sum_j \dim H^j_c \left(\Prim_{n, \overline{\mathbb F}_q},  \bigotimes_{i=1}^{\rp}  \wedge^{d_i} (L)  \otimes \bigotimes_{i=\rp+1}^{\rp+\rm} \wedge^{d_i}(L^\vee)\right)\leq 4 (2+ \max(\rp,\rm) )^{ n + \sum_{i=1}^{\rp+\rm} d_i }  .\end{equation}

\end{lemma}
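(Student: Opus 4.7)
The plan is to reduce, via Lemmas~\ref{witt-excision} and~\ref{Fourier-comparison}, the left-hand side of~\eqref{eq-betti-bound} to a sum of Betti numbers of the two explicit affine varieties $Z_{n,D_1,D_2}$ and $Z_{n-1,D_1,D_2}$, where $D_1 := \sum_{i=1}^{\rp} d_i$ and $D_2 := \sum_{i=\rp+1}^{\rp+\rm} d_i$, and then apply a Katz-type Betti number estimate on these varieties.

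First I would invoke Lemma~\ref{witt-excision}. The long exact sequence there implies that the total dimension of the first term is at most the sum of total dimensions of the other two. Since passing to $S_{d_1}\times\cdots\times S_{d_{\rp+\rm}}$-invariants only decreases dimensions, the left-hand side of~\eqref{eq-betti-bound} is bounded by
\[\sum_{n'\in\{n,n-1\}}\,\sum_j \dim H^j_c\bigl(\Witt_{n',\overline{\mathbb F}_q},\,(R(pr_2)_!\mathcal L_{\univ})^{\otimes D_1}\otimes(R(pr_2)_!\mathcal L_{\univ}^\vee)^{\otimes D_2}\bigr).\]
Next I would apply Lemma~\ref{Fourier-comparison} with $m_1=D_1$, $m_2=D_2$ to each summand. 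Up to a shift in degree and a Tate twist, neither of which affects dimensions, this identifies each summand with $\sum_j\dim H^j_c(Z_{n',D_1,D_2,\overline{\mathbb F}_q},\mathbb Q_\ell)$.

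It remains to bound $\sum_j\dim H^j_c(Z_{n',D_1,D_2,\overline{\mathbb F}_q},\mathbb Q_\ell)$ for $n'\in\{n,n-1\}$. By definition, $Z_{n',D_1,D_2}$ is the subvariety of $\mathbb A^{D_1+D_2}$ cut out by the $n'$ polynomial equations $\sigma_k(a)=\sigma_k(b)$ for $k=1,\ldots,n'$, where $\sigma_k$ is the $k$-th elementary symmetric polynomial; equivalently, it is the fiber product $\mathbb A^{D_1}\times_{\mathbb A^{n'}}\mathbb A^{D_2}$ under the two elementary-symmetric-polynomial maps to $\mathbb A^{n'}$. I would then apply a Katz-type Betti number bound for affine subvarieties to each $Z_{n',D_1,D_2}$, exploiting the fiber-product structure above (or equivalently the multigraded Newton polytope of the defining equations), and using the estimates $D_1\leq\rp(n-1)$ and $D_2\leq\rm(n-1)$ to replace powers of $D_1,D_2$ by powers of $\max(\rp,\rm)$. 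The factor of $4$ in~\eqref{eq-betti-bound} absorbs both excision terms $n'\in\{n,n-1\}$ along with any small multiplicative constants coming from the Betti number bound.

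The main obstacle is this last step. A naive application of the standard Katz bound for a variety defined by equations of degree at most $D$ in $N$ variables, of the form $\sum_j\dim H^j_c\lesssim(3+D)^N$, gives a base involving $n$ (since the degrees $\deg\sigma_k$ reach $n$), not $2+\max(\rp,\rm)$. Obtaining the claimed base therefore requires a refinement of the bound that is sensitive to the bigraded structure of $Z_{n',D_1,D_2}$ (separating the $a$ and $b$ variables and noting that each $\sigma_k$ has degree $k$ in only one group of variables), so that the contribution of each variable and each equation is separately controlled and only a factor of $2+\max(\rp,\rm)$ enters per ``dimension'' of the problem.
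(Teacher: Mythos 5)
The reduction steps (Lemmas~\ref{witt-excision} and~\ref{Fourier-comparison}) match the paper, but you then lose the crucial ingredient at the final step, and your proposed workaround does not recover it. After invoking Lemma~\ref{witt-excision}, you throw away the $S_{d_1}\times\cdots\times S_{d_{\rp+\rm}}$-invariants by the inequality $\dim(\cdot)^G\leq\dim(\cdot)$, and then try to bound $\sum_j\dim H^j_c(Z_{n',D_1,D_2})$ directly. As you yourself note, the defining equations $\sigma_k(a)=\sigma_k(b)$ have degree up to $n$ in the $a_i,b_j$ coordinates, so Katz's bound gives a base $2+n$ rather than $2+\max(\rp,\rm)$, and the result is far too weak. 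Your proposed fix --- a hypothetical bigraded refinement of Katz's Betti number bound --- is speculative and is not what makes the argument work.

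The missing idea is that one must \emph{keep} the invariants and reinterpret them. Since $S_{d_1}\times\cdots\times S_{d_{\rp+\rm}}$ is a finite group acting on the affine variety $Z_{n,D_1,D_2}$ (and we are using $\mathbb Q_\ell$ coefficients), the invariant subspace of the cohomology is precisely $H^j_c$ of the quotient $Z_{n,D_1,D_2}/(S_{d_1}\times\cdots\times S_{d_{\rp+\rm}})$. This quotient has a much better presentation: taking the elementary symmetric functions of the $a_i$'s in each block (respectively the $b_j$'s) as coordinates, the quotient is the moduli space of tuples of monic polynomials $f_1,\dots,f_{\rp+\rm}$, with $\deg f_i=d_i$, such that the leading $n+1$ coefficients of $\prod_{i\leq\rp}f_i$ and $\prod_{i>\rp}f_i$ agree. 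In these coordinates the equations are the coefficients of a product of at most $\max(\rp,\rm)$ polynomials, hence have degree at most $\max(\rp,\rm)$ --- and Katz's bound \cite[Theorem 12]{katzbetti}, applied to $n$ equations of degree at most $\max(\rp,\rm)$ in $\sum_i d_i$ variables, gives exactly $3(2+\max(\rp,\rm))^{n+\sum_i d_i}$. Once you discard the invariants, this change of coordinates is no longer available, and the argument cannot close.
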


\begin{proof}  We apply the exact sequence of Lemma~\ref{witt-excision} and then evaluate each term using Lemma \ref{Fourier-comparison}. Because of this, it suffices to bound  \[ \sum_j \dim \left( H^j_c\left( Z_{n,\sum_{i=1}^\rp d_i, \sum_{i=\rp+1}^{\rp+\rm} d_i, \overline{\mathbb F}_q}, \mathbb Q_\ell \right)\right)^{ S_{d_1} \times \dots \times S_{d_{\rp+\rm} }} \] and \[ \sum_j \dim \left( H^j_c\left( Z_{n-1,\sum_{i=1}^\rp d_i, \sum_{i=\rp+1}^{\rp+\rm} d_i, \overline{\mathbb F}_q}, \mathbb Q_\ell \right)\right)^{ S_{d_1} \times \dots \times S_{d_{\rp+\rm} }} \] separately. We have \[  \left( H^j_c\left( Z_{n,\sum_{i=1}^\rp d_i, \sum_{i=\rp+1}^{\rp+\rm} d_i, \overline{\mathbb F}_q}, \mathbb Q_\ell \right)\right)^{ S_{d_1} \times \dots \times S_{d_{\rp+\rm} }}  =   H^j_c\left( Z_{n,\sum_{i=1}^\rp d_i, \sum_{i=\rp+1}^{\rp+\rm} d_i, \overline{\mathbb F}_q} / (  S_{d_1} \times \dots \times S_{d_{\rp+\rm} } ) , \mathbb Q_\ell\right). \]

Because we can take the coordinates of $\left( \mathbb A^d\right)^{S_d}$ to be the coefficients of the polynomial $\prod_{i=1}^T (T-a_i)$ for $a_1,\dots,a_d$ the coordinates of $\mathbb A^d$, we can view $ Z_{n,\sum_{i=1}^\rp d_i, \sum_{i=r+1}^{\rp+\rm} d_i, \overline{\mathbb F}_q} / ( S_{d_1} \times \dots \times S_{d_{\rp+\rm} })$ as the moduli space of tuples of monic polynomials $f_1,\dots,f_{\rp+\rm}$, with $f_i$ of degree $d_i$, such that the leading $n+1$ coefficients of $\prod_{i=1}^\rp f_i$ and $\prod_{i=\rp+1}^{\rp+\rm}f_i$ agree.  The equality of the leading coefficient is trivial, while the equality of the remaining $n$ coefficients is a system of $n$ polynomial equations of degrees $\max(\rp,\rm)$. So this is the solution set of a system of $n$ equations, of degree at most $\max(\rp,\rm)$, in $\sum_{i=1}^{\rp+\rm} d_i$ variables. By \cite[Theorem 12]{katzbetti}, \[\sum_j \dim_i H^j_c \left( Z_{n,\sum_{i=1}^\rp d_i, \sum_{i=\rp+1}^{\rp+\rm} d_i, \overline{\mathbb F}_q}/ ( S_{d_1} \times \dots \times S_{d_{\rp+\rm} } ), \mathbb Q_\ell\right) \leq 3  (2+ \max(\rp,\rm) )^{ n + \sum_{i=1}^{\rp+\rm} d_i }.\]

For $Z_{n-1}$, the same argument gives a Betti number bound of \[3  (2+ \max(\rp,\rm) )^{ n-1 + \sum_{i=1}^{\rp+\rm} d_i }\leq   (2+ \max(\rp,\rm) )^{ n + \sum_{i=1}^{\rp+\rm} d_i }\] as $2 + \max(\rp,\rm) \geq 3$. 

Summing the bounds for $Z_n$ and $Z_{n-1}$, we get \eqref{eq-betti-bound}.  \end{proof}

\section{Analysis of the Main Term}


For $S \subseteq \{1, \dots, \rp+\rm \}$, let  \[M_S (\alpha_1,\dots,\alpha_{\rp+\rm}) =  \prod_{1 \leq i_1< i_2 \leq \rp+\rm} (q^{\alpha_{i_1}} - q^{\alpha_{i_2}}) \sum_{ \substack{ f_1,\dots, f_{\rp+\rm} \in \mathbb F_q[T] \\ \textrm{monic} \\ \prod_{i \in S} f_i  /  \prod_{i\notin S} f_i \in T^{\mathbb Z} } }  \prod_{i \in S} q^{ (-1/2 + \alpha_i)  \deg f_i }  \prod_{i\notin S} q^{( -1/2 - \alpha_i ) \deg f_i  }. \] Note that this is independent of $n$.

The main goal of this section is to estimate the coefficients of $M_S$, viewed as a Laurent series in the $q^{\alpha_i}$. In the first two lemmas we will establish some basic properties that will be useful later, in particular giving conditions for the coefficients of this series to be nonvanishing. In the next three lemmas, we will prove a bound on the coefficients using a contour integral argument. In Lemma \ref{cancellation-agreement}, we will relate the coefficients of $M_S$ to the coefficients of \eqref{eq-main-right}.

\begin{lemma}\label{Euler-support} Let $d_1,\dots,d_{\rp+\rm}$ be integers. The coefficient of $\prod_i q^{\alpha_i d_i}$  in \[  \sum_{ \substack{ f_1,\dots, f_{\rp+\rm} \in \mathbb F_q[T] \\ \textrm{monic} \\ \prod_{i \in S} f_i  /  \prod_{i\notin S} f_i \in T^{\mathbb Z}} }  \prod_{i \in S} q^{ (-1/2 + \alpha_i)  \deg f_i }  \prod_{i\notin S} q^{( -1/2 - \alpha_i ) \deg f_i  } \]
vanishes unless $d_i \geq 0$ for $i \in S$ and $d_i \leq 0$ for $i \notin S$.  Furthermore, the coefficient of $\prod_i q^{\alpha_i d_i}$ in this expression is symmetric in the variables $d_i$ for $i \in S$ and also in the $d_i$ for $i \notin S$. \end{lemma}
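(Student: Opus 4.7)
The plan is to read off both assertions directly from the shape of the summand, with no substantive combinatorics required. I will treat the two claims in turn: the vanishing is a purely degree-counting observation, and the symmetry will follow from the fact that the constraint $\prod_{i \in S} f_i / \prod_{i \notin S} f_i \in T^{\mathbb Z}$ depends only on the two products $\prod_{i \in S} f_i$ and $\prod_{i \notin S} f_i$, not on the individual $f_i$.

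For the vanishing, I factor the summand as $q^{-\frac{1}{2} \sum_i \deg f_i} \cdot \prod_{i \in S} (q^{\alpha_i})^{\deg f_i} \cdot \prod_{i \notin S} (q^{\alpha_i})^{-\deg f_i}$. Since each $f_i$ is a monic polynomial in $\mathbb{F}_q[T]$, $\deg f_i$ is a nonnegative integer, and therefore in every individual summand the exponent of $q^{\alpha_i}$ is nonnegative for $i \in S$ and nonpositive for $i \notin S$. Reading off which monomials $\prod_i q^{\alpha_i d_i}$ can possibly receive a nonzero contribution gives the vanishing half of the lemma.

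For the symmetry, I plan to note that for any permutation $\sigma$ of $S$ (respectively of its complement in $\{1,\dots,\rp+\rm\}$), the substitution $(f_i) \mapsto (f_{\sigma(i)})$ is a bijection on the summation set, since it preserves the two products $\prod_{i \in S} f_i$ and $\prod_{i \notin S} f_i$ separately and hence the ratio condition. Combined with the fact that the exponent $-1/2 + \alpha_i$ is attached to $f_i$ by a uniform rule for every $i \in S$ (and similarly outside $S$), this bijection identifies the contribution of the tuple $(f_1,\dots,f_{\rp+\rm})$ at parameters $(\alpha_1,\dots,\alpha_{\rp+\rm})$ with that of $(f_{\sigma(1)},\dots,f_{\sigma(\rp+\rm)})$ at $(\alpha_{\sigma(1)},\dots,\alpha_{\sigma(\rp+\rm)})$. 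Hence the entire sum is invariant under the natural action of $\mathrm{Sym}(S) \times \mathrm{Sym}(S^c)$ on the variables $\alpha_i$, and extracting the coefficient of $\prod_i q^{\alpha_i d_i}$ transfers this symmetry to symmetry in the exponents $d_i$ within each block, as claimed.

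I do not anticipate any real obstacle here; both parts of the lemma reduce to the observation that the summand and the summation set each factor cleanly according to membership in $S$, and the only care needed is bookkeeping the sign of $\alpha_i$ (which flips between the two blocks) when reading off the allowed range of exponents.
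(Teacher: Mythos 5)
Your proposal is correct and matches the paper's proof essentially verbatim: the vanishing comes from $\deg f_i \geq 0$ for monic $f_i$ together with the sign of the exponent attached to $\alpha_i$ in each block, and the symmetry comes from permuting the $f_i$ within $S$ (and within $S^c$), which preserves the constraint and the summand. Nothing to add.
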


\begin{proof} The vanishing is because, in each term of the sum, $q^{\alpha_i}$ appears only in nonnegative powers if $i \in S$ and in nonpositive powers in $i \in S$.  The symmetry is because the definition is symmetric in the $\alpha_i$ for $i \in S$ and symmetric in the $\alpha_i$ for $i \notin S$, by permuting the corresponding $f_i$s.
\end{proof}

\begin{lemma}\label{M-support}  $M_S$ is antisymmetric in the $\alpha_i$ variables for $i \in S$, and also in the $\alpha_i$ for $i\notin S$.  Expressed as a power series, the coefficient of $q^{ \sum_i \alpha_i d_i}$ is nonzero only if there exists a permutation $\sigma \in S_{\rp+\rm}$ such that $\sigma(\{1,\dots, |S|\}) =S$, $ d_{\sigma(i)}  \geq i-1$ for all $i$ from $1$ to $|S|$, and $d_{\sigma(i)} \leq i-1$ for all $i$ from $|S|+1$ to $\rp+\rm$. \end{lemma}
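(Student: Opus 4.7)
\textbf{Plan for Lemma \ref{M-support}.}

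\emph{Antisymmetry.} The inner sum defining $M_S$ is invariant under independently permuting the $\alpha_i$ for $i\in S$ (by simultaneously permuting the $f_i$ for $i \in S$) and the $\alpha_i$ for $i\notin S$, as recorded in Lemma~\ref{Euler-support}. The Vandermonde product $\prod_{i_1<i_2}(q^{\alpha_{i_1}} - q^{\alpha_{i_2}})$ is antisymmetric in all variables, hence in particular antisymmetric separately in the $\alpha_i$ for $i\in S$ and in those for $i\notin S$. The product of a partially symmetric series and a fully antisymmetric polynomial is therefore partially antisymmetric in the required way.

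\emph{Support, reduction to a permutation existence question.} Let $r = \rp+\rm$, let $c(a_1,\dots,a_r)$ denote the coefficient of $\prod_i q^{\alpha_i a_i}$ in the sum appearing in the definition of $M_S$, and expand the Vandermonde via the Leibniz formula as a signed sum of monomials $\prod_i q^{(\tau(i)-1)\alpha_i}$ over $\tau \in S_r$. Multiplying, the coefficient of $\prod_i q^{\alpha_i d_i}$ in $M_S$ becomes a signed sum over $\tau$ of $c(d_1-\tau(1)+1,\dots,d_r-\tau(r)+1)$. By Lemma~\ref{Euler-support} each such term vanishes unless $d_i \ge \tau(i)-1$ for $i\in S$ and $d_i \le \tau(i)-1$ for $i\notin S$. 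Hence if no permutation $\tau\in S_r$ satisfies these two inequalities, the coefficient is zero.

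\emph{Producing the required $\sigma$.} It remains to show that the existence of any such $\tau$ forces the existence of one with the additional property $\tau(S)=\{1,\dots,|S|\}$; then $\sigma:=\tau^{-1}$ will be the permutation asserted in the lemma. Given $\tau$ satisfying the two inequalities, let $\{a_1<\dots<a_{|S|}\}$ be the values $\{\tau(i):i\in S\}$ and $\{b_1<\dots<b_{r-|S|}\}$ the values $\{\tau(i):i\notin S\}$. Define $\tau'(i)$ to be the rank of $\tau(i)$ in the first list when $i\in S$, and $|S|$ plus the rank of $\tau(i)$ in the second list when $i\notin S$. By construction $\tau'$ is a permutation with $\tau'(S)=\{1,\dots,|S|\}$. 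Since the rank $j$ of $a_j$ in the first list satisfies $j\le a_j$, we get $\tau'(i)\le \tau(i)\le d_i+1$ for $i\in S$. For $i\notin S$ with $\tau(i)=b_j$, the identity $b_j = j + \#\{a_\ell \le b_j\}$ gives $b_j\le j+|S|=\tau'(i)$, so $\tau'(i)\ge \tau(i)\ge d_i+1$. Thus $\tau'$ satisfies both inequalities and the extra property, completing the proof.

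The only nontrivial step is this last combinatorial rearrangement — the main obstacle — but it reduces to the elementary rank inequalities above. Everything else is a direct manipulation of the Leibniz expansion of the Vandermonde together with Lemma~\ref{Euler-support}.
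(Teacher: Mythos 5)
Your proof is correct and takes essentially the same approach as the paper's: express the Vandermonde as a signed sum over permutations (Leibniz expansion), apply Lemma~\ref{Euler-support} to each term, and conclude that the coefficient vanishes unless some permutation $\tau$ satisfies $d_i \geq \tau(i)-1$ for $i \in S$ and $d_i \leq \tau(i)-1$ for $i \notin S$. The paper's one-sentence justification for the support condition stops there, however, and does not explicitly verify that one may further require $\tau(S) = \{1,\dots,|S|\}$ (equivalently, $\sigma(\{1,\dots,|S|\}) = S$ with $\sigma = \tau^{-1}$); your final paragraph supplies this missing step with a clean rank-sorting construction, and the two inequalities you derive (rank $j \leq a_j$ in a subset of $\{1,\dots,r\}$ of size $|S|$, and $b_j = j + \#\{a_\ell \leq b_j\} \leq j + |S|$) are correct. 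So the proposal is a valid proof, and in fact a more complete one than the paper's: it makes explicit a small combinatorial fact that the paper treats as immediate.
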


\begin{proof}  These follow from Lemma~\ref{Euler-support} once we adjust for the Vandermonde factor. The first claim follows because the Vandermonde is antisymmetric, and it is multiplied by a symmetric term, making the product antisymmetric. The support conditions follow from the support statement in Lemma~\ref{Euler-support} combined with the fact that the Vandermonde determinant is a sum of terms $q^{ \sum_i \alpha_i d_i}$ where $d_1,\dots,d_{\rp+\rm}$, is a permutation of $0,\dots, \rp+\rm-1$.

\end{proof}

We next prove three lemmas that bound the coefficients of $M_S$. Lemma~\ref{general-contour-integral-bound} will give a general means for converting bounds for the values of a power series into bounds for that power series, based on a contour integral. Lemma~\ref{Euler-bounds} will use this to bound the coefficients of a simplified version of $M_S$, where we include only some of the coefficients in the Vandermonde factor.  Lemma~\ref{shifted-Euler-bounds} will apply this to bound $M_S$. 

\begin{lemma}\label{general-contour-integral-bound} Let $F( \alpha_1,\dots, \alpha_{\rp+\rm})$ be a power series in $q^{\alpha_i} $ ($ i \in S$) and $q^{- \alpha_i} $ ( $i \not \in S$) with complex coefficients which converges for any $(\alpha_1,\dots, \alpha_{\rp+\rm})$ such that $\operatorname{Re} \alpha_i$ is sufficiently small for $i$ in $S$ and sufficiently large for $i$ not in $S$. 

Let $c_1,\dots, c_{\rp+\rm}$ be real numbers such that  $F( \alpha_1,\dots, \alpha_{\rp+\rm})$ extends to a holomorphic function on the set of tuples $(\alpha_1,\dots, \alpha_{\rp+\rm})$ such that  $\operatorname{Re} (\alpha_i)< c_i$ if $i \in S$ and $ \operatorname{Re} (\alpha_i)>  c_i$ if $i \notin S$. Suppose further that, for any $\epsilon>0$, whenever $\operatorname{Re} (\alpha_i)\leq c_i- \epsilon$ if $i \in S$ and $ \operatorname{Re} (\alpha_i)\geq c_i+ \epsilon $ if $i \not \in S$, this holomorphic function is $ \left( \frac{1}{ 1 - q^{\epsilon}} \right)^{O(1)}$. 
 
Then the coefficient of $q^{ \sum_i d_i \alpha_i}$ in $F( \alpha_1,\dots, \alpha_{\rp+\rm})$ is \[ O\left( \left( 1 + \sum_{i \in S} d_i - \sum_{i\not \in S} d_i \right)^{ O(1)} q^{ - \sum_{i=1}^{\rp+\rm} c_i  d_i } \right) .\] \end{lemma}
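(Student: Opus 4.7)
The plan is to extract the coefficient of $q^{\sum_i d_i \alpha_i}$ by a multivariable Cauchy integral and then bound it using the growth estimate for $F$ near the boundary of its domain of holomorphy. After the change of variables $z_i = q^{\alpha_i}$ for $i \in S$ and $w_i = q^{-\alpha_i}$ for $i \notin S$, the hypotheses translate as follows: $F$ becomes a power series in the $z_i$'s and $w_i$'s that extends to a holomorphic function on the open polydisc $\{|z_i| < q^{c_i}, \ i\in S\} \times \{|w_i| < q^{-c_i}, \ i \notin S\}$, and on the closed sub-polytorus $|z_i| = q^{c_i - \epsilon}, |w_i| = q^{-c_i - \epsilon}$ it satisfies $|F| = O(\epsilon^{-O(1)})$ (this is the asymptotic content of the stated bound $(1/(1-q^{\epsilon}))^{O(1)}$, whose modulus behaves like $\epsilon^{-O(1)}$ as $\epsilon \to 0^+$).

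By Cauchy's integral formula applied on this polytorus, the coefficient of interest equals
\[
\frac{1}{(2\pi i)^{\rp+\rm}} \oint \cdots \oint F \prod_{i \in S} z_i^{-d_i-1}\, dz_i \prod_{i \notin S} w_i^{d_i - 1}\, dw_i,
\]
and the standard estimate $|\oint_{|z|=r} G(z)\, z^{-e-1}dz| \leq 2\pi r^{-e} \max|G|$ on each circle, combined with the bound on $F$, yields
\[
|\text{coef}| \leq O(\epsilon^{-O(1)}) \cdot \prod_{i \in S} q^{-(c_i - \epsilon) d_i} \prod_{i \notin S} q^{(-c_i - \epsilon) d_i} = O(\epsilon^{-O(1)}) \cdot q^{-\sum_i c_i d_i} \cdot q^{\epsilon D},
\]
where $D = \sum_{i \in S} d_i - \sum_{i \notin S} d_i$. (If some $d_i$ has the ``wrong sign'' so that no such monomial appears in the series, the contour integral simply evaluates to $0$, consistent with the coefficient vanishing.)

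The conclusion comes from optimizing $\epsilon$. If $D \leq 0$, choose $\epsilon$ to be any positive constant: then both $q^{\epsilon D}$ and $\epsilon^{-O(1)}$ are $O(1)$, giving the claim directly. If $D > 0$, choose $\epsilon = 1/(1+D)$; then $q^{\epsilon D} \leq q$ is bounded, while $\epsilon^{-O(1)} = (1+D)^{O(1)}$, again yielding the stated bound. No step poses a real obstacle: this is essentially the standard Cauchy-integral argument converting a ``Phragm\'en--Lindel\"of-type'' estimate on a function into polynomial decay of its power-series coefficients, and the only care needed is to keep track of the polytorus on which the hypothesized bound applies and to perform the final one-parameter optimization in $\epsilon$.
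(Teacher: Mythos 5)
Your proof is correct and takes essentially the same approach as the paper's: extract the coefficient via a multivariable Cauchy integral over a polytorus at distance $\epsilon$ from the boundary of the domain of holomorphy, apply the hypothesized pointwise bound, and optimize $\epsilon \asymp 1/(1+D)$. The change of variables to $z_i = q^{\alpha_i}$, $w_i = q^{-\alpha_i}$ is purely cosmetic (the paper integrates directly in the $\alpha$-variables over a fundamental domain for the imaginary periodicity), and both choices of $\epsilon$ differ only by a bounded factor, so they yield the same conclusion.
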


\begin{proof} Let $\delta_i= -1$ if $i\in S$ and $1$ if $i\notin S$.
By the Cauchy integral formula, the coefficient of $q^{ \sum_i d_i \alpha_i}$ in $F( \alpha_1,\dots, \alpha_{\rp+\rm})$  is  \begin{equation}\label{eq-contour-integral} \frac{1}{ (2\pi)^{\rp+\rm} } \int_{ t_1,\dots, t_{\rp+\rm} \in [0,2\pi]}  \frac{  F ( c_1 + \delta_i \epsilon + i t_1, \dots, c_{\rp+\rm} + \delta_i \epsilon + i t_{\rp+\rm} ) }{q^{(c_1 + \delta_i \epsilon + i t_1)d_1 + \dots + (c_{\rp+\rm}  + \delta_i \epsilon + i t_{\rp+\rm} ) d_{\rp+\rm} }      } dt_1\dots d t_{\rp+\rm} \end{equation} where we have extended $F$ as a holomorphic function from its region of absolute convergence. Then by our assumed bound, \eqref{eq-contour-integral} is \[ \frac{1}{ (2\pi)^{\rp+\rm} } \int_{ t_1,\dots, t_{\rp+\rm} \in [0,2\pi]}  \frac{  O\left(  \left( \frac{1}{ 1 - q^{\epsilon}} \right)^{O(1)} \right)   }{q^{ \sum_{i=1}^{\rp+\rm} c_i d_i  -  \sum_{i \in S} \epsilon d_i +  \sum_{i \not \in S} \epsilon d_i  } } dt_1\dots d t_{\rp+\rm} = \frac{  O\left(  \left( \frac{1}{ 1 - q^{\epsilon}} \right)^{O(1)} \right)   }{q^{ \sum_{i=1}^{\rp+\rm} c_i d_i  -  \sum_{i \in S} \epsilon d_i +  \sum_{i \not \in S} \epsilon d_i  } } .\] Now let \[ \epsilon = \frac{1}{ (  1+ \sum_{i \in S} d_i - \sum_{i \notin S} d_i )\log q } .\] Then our bound for \eqref{eq-contour-integral} specializes to
\[  \frac{  O \left(   \left( 1+ \sum_{i \in S} d_i - \sum_{i \notin S} d_i \right)^{O(1) } \right)} {  q^{ \sum_{i=1}^{\rp+\rm} c_i d_i - O(1) }}  = O\left( \left( 1 + \sum_{i \in S} d_i - \sum_{i\not \in S} d_i \right)^{ O(1)} q^{ - \sum_{i=1}^{\rp+\rm} c_i  d_i } \right).\] \end{proof}

To simplify $M_S$, observe that \begin{equation}\label{MS-observation-1} \prod_{1 \leq i_1< i_2 \leq \rp+\rm} (q^{\alpha_{i_1}} - q^{\alpha_{i_2}}) =\pm  \left(\prod_{i_1 \in S} \prod_{i_2 \notin S}  (q^{\alpha_{i_2}} - q^{ \alpha_{i_1}} ) \right) \left( \prod_{\substack{  i_1, i_2\in S \\ i_1 < i_2 } } (q^{\alpha_{i_1}} - q^{\alpha_{i_2}}) \right)  \left( \prod_{ \substack{  i_1, i_2\notin S \\ i_1 < i_2 } } (q^{\alpha_{i_1}} - q^{\alpha_{i_2}}) \right)  \end{equation} and that we can further write \begin{equation}\label{MS-observation-2} \prod_{i_1 \in S} \prod_{i_2 \notin S}  (q^{\alpha_{i_2}} - q^{ \alpha_{i_1} }) =  \left( \prod_{i \not \in S} q^{ \rp \alpha_i } \right) \left(\prod_{i_1 \in S} \prod_{i_2 \notin S}  (1  - q^{ \alpha_{i_1} - \alpha_{i_2}})\right) . \end{equation} We will only use the factor $\prod_{i_1 \in S} \prod_{i_2 \notin S}  (1  - q^{ \alpha_{i_1} - \alpha_{i_2}})$, which can be expressed nicely as an Euler product, in Lemma \ref{Euler-bounds}, and will add the rest of the formula in Lemma~\ref{shifted-Euler-bounds}.

\begin{lemma}\label{Euler-bounds}  Let $d_1,\dots,d_{\rp+\rm}$ be integers with $\sum_{i\in S} d_i - \sum_{i \notin S} d_i \geq 0$. The coefficient of $\prod_i q^{\alpha_i d_i}$  in \begin{equation}\label{Euler-bounds-expression} \left( \prod_{i_1 \in S} \prod_{i_2 \notin S}\left(1 - q^{\alpha_{i_1} - \alpha_{i_2}}\right)\right) \sum_{ \substack{ f_1,\dots, f_{\rp+\rm} \in \mathbb F_q[T] \\ \textrm{monic} \\ \prod_{i \in S} f_i  /  \prod_{i\notin S} f_i \in T^{\mathbb Z}} }  \prod_{i \in S} q^{ (-1/2 + \alpha_i)  \deg f_i }  \prod_{i\notin S} q^{( -1/2 - \alpha_i ) \deg f_i  } \end{equation} is \[  O \left(   \left(1 + \sum_{i \in S} d_i - \sum_{i \notin S} d_i \right)^{O(1)}\min \left( q^{ \frac{ - \max_{i \in S} d_i + \min_{i \notin S} d_i}{2} },  q^{ \frac{ - \sum_{i  \in S}  d_i }{2 }}, q^{ \frac{  \sum_{i \notin S}  d_i }{2 }}, q^{- \frac{ \left| \sum_{i \in S}  d_i + \sum_{i \notin S} d_i \right|}{2}  } \right) \right)  .\]  
%
%
%
\end{lemma}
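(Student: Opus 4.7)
The plan is to write the inner sum as an Euler product, cancel its pole structure against the $\prod_{i\in S, j\notin S}(1-q^{\alpha_i-\alpha_j})$ factor, and then bound the Laurent coefficients of the resulting holomorphic function by four applications of Lemma~\ref{general-contour-integral-bound}, one for each upper bound appearing in the $\min$.

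First I would factor each $f_i$ by its prime decomposition. The constraint $\prod_{i\in S}f_i/\prod_{i\notin S}f_i\in T^{\mathbb Z}$ decouples at each prime $P\neq T$ into the valuation equality $\sum_{i\in S}v_P(f_i)=\sum_{i\notin S}v_P(f_i)$, with no condition at $T$. Writing $x_i=q^{-1/2+\alpha_i}$ for $i\in S$ and $y_j=q^{-1/2-\alpha_j}$ for $j\notin S$, the sum becomes
\[
\mathcal A(\alpha)=\prod_{i\in S}\frac{1}{1-x_i}\prod_{j\notin S}\frac{1}{1-y_j}\prod_{P\neq T}N_P(\alpha),
\]
where $N_P(\alpha)=\sum_{m\geq 0}h_m(\{x_i^{\deg P}\}_{i\in S})\,h_m(\{y_j^{\deg P}\}_{j\notin S})$. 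By the Cauchy identity $\prod_{i\in S, j\notin S}(1-u_iv_j)^{-1}=\sum_\lambda s_\lambda(u)s_\lambda(v)$, of which $N_P$ retains only the one-row partitions $\lambda=(m)$, together with $\prod_{P\neq T}(1-u^{\deg P})^{-1}=(1-u)/(1-qu)$ at $u=x_iy_j$, I rewrite this as
\[
\mathcal A(\alpha)=\prod_{i\in S}\frac{1}{1-x_i}\prod_{j\notin S}\frac{1}{1-y_j}\prod_{i\in S, j\notin S}\frac{1-q^{-1+\alpha_i-\alpha_j}}{1-q^{\alpha_i-\alpha_j}}\cdot G_1(\alpha),
\]
where $G_1(\alpha):=\prod_{P\neq T}H_P$ with $H_P:=N_P\cdot\prod_{i\in S, j\notin S}(1-(x_iy_j)^{\deg P})$. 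Multiplying by the stated factor $\prod_{i\in S, j\notin S}(1-q^{\alpha_i-\alpha_j})$ then cancels the denominators, leaving
\[
\mathcal E(\alpha):=\prod_{i\in S}\frac{1}{1-x_i}\prod_{j\notin S}\frac{1}{1-y_j}\prod_{i\in S, j\notin S}(1-q^{-1+\alpha_i-\alpha_j})\cdot G_1(\alpha),
\]
whose Laurent coefficients are what the lemma asks us to bound.

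The key analytic input is the convergence region of $G_1$. The Cauchy identity $\prod(1-u_iv_j)\cdot\sum_\lambda s_\lambda(u)s_\lambda(v)=1$ gives the expansion $H_P = 1 - s_{(1,1)}(x^{\deg P})s_{(1,1)}(y^{\deg P}) + (\text{corrections from }|\lambda|\geq 3)$, so $H_P\equiv 1$ whenever $|S|\leq 1$ or $|S^c|\leq 1$ (because $s_{(1,1)}=e_2$ vanishes). In the remaining cases the leading correction at a prime $P$ has size $|x_{i_1}x_{i_2}y_{j_1}y_{j_2}|^{\deg P}$ for distinct $i_1,i_2\in S$ and distinct $j_1,j_2\notin S$, and summing the Euler product shows $G_1$ is holomorphic and bounded by $(1/(1-q^\delta))^{O(1)}$ at distance $\delta$ inside the region where $\Re(\alpha_{i_1}+\alpha_{i_2}-\alpha_{j_1}-\alpha_{j_2})<1$ for all such index choices. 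Combined with the $T$-pole conditions $\Re\alpha_i<1/2$ for $i\in S$ and $\Re\alpha_j>-1/2$ for $j\notin S$, this describes the holomorphicity region of $\mathcal E$.

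Finally I would apply Lemma~\ref{general-contour-integral-bound} to $\mathcal E$ with four choices of contour constants: bound (2) comes from $c_i=1/2$ for $i\in S$, $c_j=0$ for $j\notin S$; bound (3) from $c_i=0$, $c_j=-1/2$; bound (4) from the uniform choice $c_i=c_j=1/2$ or $c_i=c_j=-1/2$ according to the sign of $\sum_{i\in S}d_i+\sum_{j\notin S}d_j$; and bound (1) from $c_{i_0}=1/2$ at some $i_0\in S$ attaining $\max_{i\in S}d_i$, $c_{j_0}=-1/2$ at some $j_0\notin S$ attaining $\min_{j\notin S}d_j$, with all other $c$'s equal to $0$. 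In each case one checks that the open contour region lies inside the holomorphicity region above, so the lemma applies and delivers the stated bound up to a factor polynomial in $1+\sum_{i\in S}d_i-\sum_{j\notin S}d_j$. The main obstacle is the symmetric-function bookkeeping in the second step: producing the Cauchy-type expansion of $H_P$ explicitly, and showing that the higher-order corrections ($|\lambda|\geq 3$) do not further restrict $G_1$'s convergence region beyond what the single $(1,1)$ correction imposes. Once $G_1$ is controlled, the four contour applications are routine.
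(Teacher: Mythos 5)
Your overall strategy is the same as the paper's: write the sum as an Euler product, strip off the pole-producing factors (the $T$-Euler factor and the $\prod_{i_1\in S,i_2\notin S}(1-q^{\alpha_{i_1}-\alpha_{i_2}-1})$ factors), show that the remaining Euler product is holomorphic and uniformly $O\bigl((1-q^{\epsilon})^{-O(1)}\bigr)$ in several half-open regions, and feed each region into Lemma~\ref{general-contour-integral-bound}. Your four contour choices are the right ones and match the paper's (the paper lists five regions, with its regions (4) and (5) together giving your bound (4); incidentally, the paper's displayed region (3) appears to carry a sign typo and should read $\operatorname{Re}\alpha_i\geq -\tfrac12+\epsilon$ for $i\notin S$ --- your choice $c_j=-1/2$ is the one that is actually consistent with the stated bound $q^{\sum_{i\notin S}d_i/2}$). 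Your Euler-product bookkeeping, including the identification of the local factor as $N_P=\sum_m h_m(x^{\deg P})h_m(y^{\deg P})$ and the identity $\prod_{P\neq T}(1-u^{\deg P})^{-1}=(1-u)/(1-qu)$, is also correct.

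The genuine gap is exactly the one you flag: you never prove that $G_1=\prod_{P\neq T}H_P$ converges with the required bound on the claimed region. Identifying the degree-$4$ term of $H_P$ as $-s_{(1,1)}(x)s_{(1,1)}(y)$ establishes that the pair condition $\operatorname{Re}(\alpha_{i_1}+\alpha_{i_2}-\alpha_{j_1}-\alpha_{j_2})<1$ is \emph{necessary}, but you give no argument that the higher corrections ($|\lambda|\geq 3$) obey the same threshold, and that is the entire analytic content of the lemma. The paper closes this by writing the coefficient of $|\pi|^{-\sum_{i\in S}e_i+\sum_{i\in S}\alpha_ie_i-\sum_{i\notin S}\alpha_ie_i}$ in $f_\pi$ explicitly as an inclusion--exclusion over subsets $J\subseteq S\times S^c$ and producing a sign-reversing involution which shows that this coefficient vanishes unless $\max_{i\in S}e_i<|\{j\notin S:e_j>0\}|$ or the dual inequality holds; from that combinatorial constraint (together with $\sum_{i\in S}e_i=\sum_{i\notin S}e_i$ and the finiteness of the surviving tuples) the $O(|\pi|^{-1-\epsilon})$ bound follows cleanly in each of the five regions. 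Note also that a full characterization of the holomorphicity region --- your claim that it is exactly the intersection of the pair conditions with the $T$-pole conditions --- is neither established nor needed; it suffices, as the paper does, to verify the convergence on the five specific open sets adjacent to the contours you choose. If you replace your hand-waved ``higher corrections do not further restrict'' step with the paper's involution argument restricted to those five regions, the proof is complete.
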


Note that, by Lemma~\ref{Euler-support}, this coefficient vanishes unless $\sum_{i\in S} d_i - \sum_{i \notin S} d_i \geq 0$.

\begin{proof} We view the upper bound as a conjunction of four upper bounds and prove each separately, by applying Lemma \ref{general-contour-integral-bound}.

In each case, by Lemma \ref{general-contour-integral-bound}, it suffices to prove that, for all $\epsilon>0$, the expression \eqref{Euler-bounds-expression}  is $\left( \frac{1}{ 1- q^{\epsilon}}\right)^{O(1)}$  if $\alpha_1,\dots,\alpha_{\rp+\rm}$ satisfy one of the following  \begin{enumerate}

\item For some $i_1 \in S$, we have $ \operatorname{Re} \alpha_{i_1}  \leq \frac{1}{2}-  \epsilon $, and  $\operatorname{Re} \alpha_i \leq - \epsilon$ for all other $i \in S$. For some $i_2 \notin S$, we have $\operatorname{Re} \alpha_{i_2} \geq -\frac{1}{2} + \epsilon $, and  $\operatorname{Re} \alpha_i \geq \epsilon$ for all other $i \notin S$. 

\item  $\operatorname{Re} \alpha_i \leq \frac{1}{2} - \epsilon$ for $i \in S$,  $\operatorname{Re} \alpha_i \geq   \epsilon$ for  $i \notin S$ 

\item  $\operatorname{Re} \alpha_i \leq  - \epsilon$ for $i \in S$,  $\operatorname{Re} \alpha_i \geq  \frac{1}{2} + \epsilon$ for  $i \notin S$

\item $\operatorname{Re} \alpha_i \leq   \frac{1}{2}  - \epsilon$ for all $i \in S$,  $\operatorname{Re} \alpha_i \geq  \frac{1}{2} + \epsilon$ for $i \notin S$.

\item $\operatorname{Re} \alpha_i \leq   -\frac{1}{2}  - \epsilon$ for all $i \in S$,  $\operatorname{Re} \alpha_i \geq - \frac{1}{2} + \epsilon$ for $i \notin S$.

\end{enumerate}


For all cases, we will use the Euler products

 \begin{equation}\label{analysis-sum} \sum_{ \substack{ f_1,\dots, f_{\rp+\rm} \in \mathbb F_q[T] \\ \textrm{monic} \\ \prod_{i \in S} f_i  / \prod_{i\notin S} f_i \in T^{\mathbb Z}} }  \prod_{i \in S} q^{ (-1/2 + \alpha_i)  \deg f_i }  \prod_{i\notin S} q^{( -1/2 - \alpha_i ) \deg f_i  }  \end{equation}
\begin{equation}\label{analysis-L-product} =  \left( \prod_{i \in S} ( 1 - q^{ -1/2 + \alpha_i})^{-1} \prod_{i \notin S}  (1 - q^{ -1/2 - \alpha_i})^{-1}  \right)  \prod_{ \substack { \pi \in \mathbb F_q[T] \\ \textrm{monic} \\ \textrm{irreducible} \\ \pi \neq T }}  \sum_{\substack{ e_1,\dots, e_{\rp+\rm} \in \mathbb N \\  \sum_{i\in S} e_i = \sum_{i \notin S} e_i}} |\pi|^{ - \sum_{i\in S} e_i +   \sum_{i \in S} \alpha_i e_i - \sum_{i \notin S} \alpha_i e_i} \end{equation}

where the first term is the Euler factor at $T$, and \begin{equation}\label{analysis-easy-product} (1 - q^{\alpha_{i_1} - \alpha_{i_2}})  =\prod_{ \substack { \pi \in \mathbb F_q[T] \\ \textrm{monic} \\ \textrm{irreducible} }}  (1- |\pi| ^{ \alpha_{i_1} - \alpha_{i_2} - 1} ).\end{equation} 

Let us briefly discuss issues of convergence. The sum \eqref{analysis-sum} is (absolutely) convergent whenever $\operatorname{Re} \alpha_i <-1/2$ for $i \in S$ and $\operatorname{Re} \alpha_i> 1/2$ for $i \not\in S$. (In fact, this holds even without the condition $ \prod_{i \in S} f_i/ \prod_{i \notin S} f_i \in T^{\mathbb Z}$.) Hence \eqref{analysis-L-product} converges to the value of the sum in the same region. The Euler product \eqref{analysis-easy-product} is valid when $ \operatorname{Re} \alpha_{i_1} - \operatorname{Re} \alpha_{i_2} < 0 $. So the holomorphic function \eqref{Euler-bounds-expression} is equal to 
\begin{equation}\label{analysis-combined-product}  \left( \prod_{i \in S} ( 1 - q^{ -1/2 + \alpha_i})^{-1} \prod_{i \notin S}  (1 - q^{ -1/2 - \alpha_i})^{-1} \prod_{i_1 \in S} \prod_{i_2 \notin S}   ( 1-  q ^{\alpha_{i_1 }- \alpha_{i_2} -1} ) \right) \prod_{ \substack { \pi \in \mathbb F_q[T] \\ \textrm{monic} \\ \textrm{irreducible} \\ \pi \neq T }} f_\pi(\alpha_1,\dots, \alpha_{\rp+\rm} ) \end{equation} as long as $\operatorname{Re} (\alpha_i)$ is sufficiently small for $i\in S$ and sufficiently large for $i \not\in S$, where  \[ f_\pi(\alpha_1,\dots, \alpha_{\rp+\rm} )=  \prod_{i_1 \in S} \prod_{i_2 \notin S}   ( 1-  q ^{\alpha_{i_1} - \alpha_{i_2} -1} )\sum_{\substack{ e_1,\dots, e_{\rp+\rm} \in \mathbb N \\  \sum_{i\in S} e_i = \sum_{i \notin S} e_i}} |\pi|^{ - \sum_{i\in S} e_i +   \sum_{i \in S} \alpha_i e_i - \sum_{i \notin S} \alpha_i e_i} \] We will show that \eqref{analysis-combined-product} in fact converges in a larger region, which will suffice to apply Lemma \ref{general-contour-integral-bound}.

Let us fix $\epsilon$ for the remainder of the proof.

In all five cases, the Euler factors at $T$ is manifestly $O \left(  (1- q^{-\epsilon})^{-O(1)} \right)$ so we focus on the other Euler factors, where it suffices to prove that for $\alpha_1,\dots, \alpha_{\rp+\rm}$ in these ranges

\begin{equation}\label{euler-factor-bound}   \left| f_\pi(\alpha_1,\dots, \alpha_{\rp+\rm} )\right| \leq  ( 1- |\pi|^{ - 1- \epsilon})^{-O(1) } .\end{equation}

This is sufficient because \[ \prod_{ \substack { \pi \in \mathbb F_q[T] \\ \textrm{monic} \\ \textrm{irreducible} \\ \pi \neq T }}  (1 - |\pi|^{-1 -\epsilon} )^{-O(1) } \leq \prod_{ \substack { \pi \in \mathbb F_q[T] \\ \textrm{monic} \\ \textrm{irreducible}  }}  (1 - |\pi|^{-1 -\epsilon} )^{-O(1) } = (\zeta_{\mathbb F_q[T]} ( 1+ \epsilon) )^{O(1) } = ( 1 - q^{ -\epsilon} )^{-O(1) }. \]

We have \begin{equation}\label{euler-factor-sum} f_\pi(\alpha_1,\dots, \alpha_{\rp+\rm}  ) =  \sum_{\substack{ e_1,\dots, e_{\rp+\rm} \in \mathbb N \\  \sum_{i\in S} e_i = \sum_{i \notin S} e_i}} \Bigl( |\pi|^{ - \sum_{i \in S} e_i +   \sum_{i \in S} \alpha_i e_i - \sum_{i \notin S} \alpha_i e_i }  \sum_{\substack { J \subseteq S \times S^c \\ |J \cap p_S^{-1}( j) | \leq e_j  , j \in S \\  | J \cap p_{S^c}^{-1}(j)  | \leq e_j , j \notin S } } (-1)^{ |J|}\Bigr) \end{equation} where $p_S$ and $p_{S^c}$ are the projections onto $S$ and $S^c$.

Observe first that when $e_1,\dots,e_{\rp+\rm}$ are not all zero, \begin{equation}\label{euler-factor-coefficient} \sum_{\substack { J \subseteq S \times S^c \\ |J \cap p_S^{-1}( j) | \leq e_j  , j \in S \\  | J \cap p_{S^c}^{-1}(j)  | \leq e_j , j \notin S } } (-1)^{ |J|} \end{equation} vanishes unless $\max_{i \in S} e_i  < | \{ i \notin S | e_i >0\} |$ or $\max_{i \notin S} e_i <   | \{ i \in S | e_i>0 \} |$. Indeed, if neither of these is satisfied, letting $i_1\in S$ and $i_2\notin S$ be such that $e_{i_1}$ and $e_{i_2}$ attain their maximal values, we see that adding $(i_1,i_2)$ to $J$ or removing it from $J$ preserves the conditions $|J \cap p_S^{-1} (j) | \leq e_j  , j \in S , | J \cap p_{S^c}^{-1}(j) | \leq e_j , j \notin S $, so defines a sign-reversing involution of $J$, and thus the sum vanishes.

In particular, \eqref{euler-factor-coefficient} vanishes for all but finitely many $e_1, \dots, e_{\rp+\rm}$. So in each of the five cases, to prove \eqref{euler-factor-bound} it suffices to show that each term in \eqref{euler-factor-sum}, except the one where $e_1,\dots,e_{\rp+\rm}=0$, is $O( |\pi|^{-1-\epsilon})$.  Furthermore, if $\sum_{i \in S} e_i = \sum_{i \not \in S} e_i =1$, the sum over $J$ has two terms which cancel each other, so we may assume $\sum_{i\in S} e_i \geq 2$.

Case (1) is the most difficult. However, using the inequality we have verified, it is straightforward. Suppose  \[ \max_{i \in S} e_i < | \{ i \notin S | e_i >0\} |,\] so \[\max_{i \in S} e_i  \leq  | \{ i \notin S | e_i >0\} |-1.\]  But\[| \{ i \notin S | e_i >0\} | + \max_{i \notin S} e_i -1 \leq \sum_{i \notin S} e_i,\] so  \[ \max_{i \in S} e_i + \max_{i \notin S} e_i \leq \sum_{i \notin S} e_i\] thus

\[  - \sum_{i \in S}  e_i +   \sum_{i \in S}\operatorname{Re}( \alpha_i )e_i - \sum_{i \notin S} \operatorname{Re}(\alpha_i )e_i  \leq  - \sum_{i\in S} e_i +   \frac{ \max_{i \in S} e_i + \max_{i \notin S} e_i} {2}  - 2\epsilon \leq  - \sum_{i \in S} e_i + \frac{ \sum_{i \notin S} e_i }{2} - 2\epsilon\] \[ \leq - \frac{ \sum_{i \in S} e_i}{2} - 2\epsilon \leq -1 -2 \epsilon \]
so all terms are $O ( |\pi|^{-1 - 2 \epsilon})$.

For cases (2) and (3), we have  \[  |\pi|^{ - \sum_{i \in S } e_i +   \sum_{i \in S}\operatorname{Re}( \alpha_i )e_i - \sum_{i \notin S} \operatorname{Re}(\alpha_i) e_i } \leq |\pi|^{ - \left(\frac{1}{2} + 2 \epsilon\right) \sum_{i \in S} e_i },\] so the terms are $O( |\pi|^{-1-4 \epsilon})$.

For case (4) and (5) , we have  \[  |\pi|^{ - \sum_{i \in S } e_i +   \sum_{i \in S} \operatorname{Re}(\alpha_i )e_i - \sum_{i \notin S} \operatorname{Re}(\alpha_i )e_i } \leq |\pi|^{ - \left(1+ 2 \epsilon\right) \sum_{i \in S} e_i },\] so all the terms are  $O( |\pi| ^{ - 2 -4 \epsilon})$.

\end{proof} 

\begin{lemma}\label{shifted-Euler-bounds}  Let $d_1,\dots, d_{\rp+\rm}$ be integers satisfying the inequalities of Lemma~\ref{M-support}. Then the coefficient of $\prod_i q^{\alpha_i d_i}$  in $ M_S $ is bounded by \[   O \left( \left(O(1) + \sum_{i \in S} d_i - \sum_{i \notin S} d_i \right)^{O(1)} \right) \] 
times 
\[ \min \left( q^{ \frac{ - \max_{i \in S} d_i + \min_{i \notin S} d_i -1}{2} }, q^{ \frac{ - \sum_{i \in S}  d_i  +  { |S| \choose 2} }{2}}, q^{ \frac{  \sum_{ i\notin S}  d_i  +  { |S| \choose 2} - {\rp+\rm \choose 2} }{2}},  q^{- \frac{ \left| \sum_{i \in S}  d_i + \sum_{i \notin S} d_i  - {\rp+\rm \choose 2} \right|}{2} } \right)  . \] 

\end{lemma}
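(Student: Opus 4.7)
The plan is to apply Lemma~\ref{Euler-bounds} after peeling off the remaining Vandermonde pieces from $M_S$. Using the identities \eqref{MS-observation-1} and \eqref{MS-observation-2}, we have
\[M_S = \pm\Bigl(\prod_{i\notin S} q^{|S|\alpha_i}\Bigr)\Bigl(\prod_{\substack{i_1<i_2 \\ i_1,i_2\in S}}(q^{\alpha_{i_1}}-q^{\alpha_{i_2}})\Bigr)\Bigl(\prod_{\substack{i_1<i_2 \\ i_1,i_2\notin S}}(q^{\alpha_{i_1}}-q^{\alpha_{i_2}})\Bigr)\, E(\alpha_1,\dots,\alpha_{\rp+\rm}),\]
where $E$ denotes the expression \eqref{Euler-bounds-expression} whose coefficients are controlled by Lemma~\ref{Euler-bounds}. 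Writing $S=\{j_1<\cdots<j_{|S|}\}$ and $S^c=\{k_1<\cdots<k_{\rp+\rm-|S|}\}$, I would expand the two intra-group Vandermondes as alternating sums over $S_{|S|}$ and $S_{\rp+\rm-|S|}$. Together with the monomial shift $\prod_{i\notin S}q^{|S|\alpha_i}$, this realizes the coefficient of $\prod_i q^{d_i\alpha_i}$ in $M_S$ as a signed sum over $(\sigma,\tau)\in S_{|S|}\times S_{\rp+\rm-|S|}$ of coefficients of $\prod_i q^{d_i'\alpha_i}$ in $E$, where
\[d_{j_l}' := d_{j_l}-\sigma(l)+1,\qquad d_{k_m}' := d_{k_m}-|S|-\tau(m)+1.\]

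The key arithmetic identities are then
\[\sum_{i\in S} d_i' = \sum_{i\in S} d_i - \binom{|S|}{2},\qquad \sum_{i\notin S} d_i' = \sum_{i\notin S} d_i + \binom{|S|}{2}-\binom{\rp+\rm}{2},\]
which follow from $\binom{|S|}{2}+|S|(\rp+\rm-|S|)+\binom{\rp+\rm-|S|}{2}=\binom{\rp+\rm}{2}$. Applying Lemma~\ref{Euler-bounds} to each non-vanishing term, these identities translate the second, third and fourth bounds of that lemma directly into the second, third and fourth bounds claimed here. For the first bound, one uses $\max_l(d_{j_l}-\sigma(l)+1)\geq \max_{i\in S}d_i - |S|+1$ and $\min_m(d_{k_m}-|S|-\tau(m)+1)\leq \min_{i\notin S}d_i - |S|$ to obtain $-\max_{i\in S}d_i'+\min_{i\notin S}d_i'\leq -\max_{i\in S}d_i + \min_{i\notin S}d_i - 1$. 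The polynomial prefactor $(1+\sum_{i\in S}d_i'-\sum_{i\notin S}d_i')^{O(1)}$ differs from its $d_i$-version by an $O(1)$ additive shift depending only on $\rp,\rm$, so it collapses into the stated $\bigl(O(1)+\sum_{i\in S}d_i - \sum_{i\notin S}d_i\bigr)^{O(1)}$ factor.

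Finally, the signed sum has at most $|S|!(\rp+\rm-|S|)!$ terms, a quantity bounded in $\rp,\rm$ alone, so the triangle inequality produces the stated bound. Applicability of Lemma~\ref{Euler-bounds} is automatic: if a shifted tuple $(d_i')$ falls outside the support described in Lemma~\ref{Euler-support} then the corresponding coefficient of $E$ simply vanishes, and otherwise the hypothesis $\sum_{i\in S}d_i'-\sum_{i\notin S}d_i'\geq 0$ is forced. The main obstacle is carefully verifying that each of the four bound comparisons transfers correctly under the shift; the real analytic work was already carried out in Lemma~\ref{Euler-bounds}, and what remains here is tracking the effect of the three remaining Vandermonde-type factors on the exponent lattice.
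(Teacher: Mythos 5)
Your proposal is correct and takes essentially the same approach as the paper's proof: decompose $M_S$ via \eqref{MS-observation-1} and \eqref{MS-observation-2} into the expression of Lemma~\ref{Euler-bounds} times a bounded polynomial factor supported on permutations of $\{0,\dots,|S|-1\}$ and $\{|S|,\dots,\rp+\rm-1\}$, and then track how each of the four exponents transforms under the shift $d_i \mapsto d_i'$. The paper states this in a single terse paragraph (noting that only the first exponent requires a genuine minimization over the support, the others being constant), whereas you have spelled out the sum over $(\sigma,\tau)\in S_{|S|}\times S_{\rp+\rm-|S|}$ and verified each shift explicitly, which is the same argument carried out in full detail.
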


\begin{proof}  

By Equations \eqref{MS-observation-1} and \eqref{MS-observation-2}, $M_S$ is equal to the expression \eqref{Euler-bounds-expression} times \[  \pm\prod_{\substack{ 1 \leq i_1< i_2 \leq \rp+\rm\\ i_1, i_2 \in S \textrm { or } i_1,i_2 \notin S}} (q^{\alpha_{i_1}} - q^{\alpha_{i_2}})   \prod_{i\notin  S} q^{ |S| \alpha_i}  .\] This additional factor has bounded coefficients and is supported on those terms $q^{ \sum_i \alpha_i d_i}$ where $\{d_i | i \in S\} = \{0,\dots, |S|-1\}$ and $\{ d_i| i \notin S\} = \{|S|,\dots, \rp+\rm-1 \}$. 

Hence we can obtain bounds for $M_S$ by subtracting from the exponents in Lemma~\ref{Euler-bounds} the minimal possible contribution of an element in the support of this additional factor to the exponent, which are as stated. In fact, in all cases but the first, we are minimizing a constant function.

\end{proof}

\begin{lemma}\label{cancellation-agreement}  Assume $|S|  -\rp $ is a multiple of $m$.   The coefficients of $q^{ \sum_i \alpha_i d_i}$ in the power series   \[ \prod_{1 \leq i_1< i_2 \leq \rp+\rm} (q^{\alpha_{i_1}} - q^{\alpha_{i_2}}) \sum_{\chi \in S_{n,q}}  \epsilon_\chi^{-\rm} \prod_{i=1}^{\rp+\rm} L(1/2- \alpha_i, \chi) \]   and \[  (q^{n}- q^{n-1})  \mu^{ \frac{\rp- |S|}{m}}  \prod_{i \notin S} q^{ \alpha_i (n-1)} M_S(\alpha_1,\dots, \alpha_{\rp+\rm} ) \]    agree as long as

\[ \sum_{i \in S } d_i  - { |S| \choose 2} ,  \sum_{i \notin S}  (n-1-d_i) + { \rp+\rm \choose 2} - {|S| \choose 2} \leq n-1 .\]

\end{lemma}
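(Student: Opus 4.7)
The proof is a diagonal main-term computation: rewrite one side by the functional equation, swap orders of summation, and apply orthogonality of characters; the hypothesis on $d$ is exactly what is needed so that the orthogonality gives a clean dichotomy rather than the usual trichotomy of primitive character sums.

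First I would apply the functional equation $L(1/2-\alpha_i,\chi) = \epsilon_\chi q^{(n-1)\alpha_i} L(1/2+\alpha_i,\overline{\chi})$ to every factor with $i\notin S$. This converts the weight $\epsilon_\chi^{-\rm}$ into $\epsilon_\chi^{\rp-|S|}$ with an overall prefactor $\prod_{i\notin S}q^{(n-1)\alpha_i}$. Since $|S|-\rp$ is a multiple of $m$, the identity $\epsilon_\chi^m=\mu$ from Lemma~\ref{lambda-F-relation} lets me replace $\epsilon_\chi^{\rp-|S|}$ by the constant $\mu^{(\rp-|S|)/m}$ and pull it outside the $\chi$-sum. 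The inner expression is now
\[
\mu^{(\rp-|S|)/m}\prod_{i\notin S}q^{(n-1)\alpha_i}\sum_{\chi\in S_{n,q}}\prod_{i\in S}L(1/2-\alpha_i,\chi)\prod_{i\notin S}L(1/2+\alpha_i,\overline{\chi}).
\]

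Next I would expand each $L$-function as a sum over monic $f_i$ of degree at most $n-1$, swap orders of summation, and evaluate the inner sum $\sum_{\chi\in S_{n,q}}\chi(A)\overline{\chi}(B)$ with $A=\prod_{i\in S}f_i$ and $B=\prod_{i\notin S}f_i$. Via Definition~\ref{katz-bijection} this is a sum of primitive even characters $\psi$ of $G:=(\mathbb F_q[x]/x^{n+1})^\times/\mathbb F_q^\times$ applied to $\alpha(A)\alpha(B)^{-1}$, where $\alpha(f):=f(x^{-1})x^{\deg f}$. Inclusion-exclusion between all even characters of $G$ and the imprimitive ones (those factoring through the reduction mod $x^n$) gives
\[
\sum_{\chi\in S_{n,q}}\chi(A)\overline{\chi}(B) = q^n\,\mathbf{1}[\alpha(A)\equiv\alpha(B)\bmod x^{n+1}] - q^{n-1}\,\mathbf{1}[\alpha(A)\equiv\alpha(B)\bmod x^n].
\]
When $\deg A,\deg B\le n-1$ the reversals $\alpha(A),\alpha(B)$ are polynomials in $x$ of degree at most $n-1$, so their $x^n$-coefficients vanish; hence the two indicators coincide and the sum collapses to $(q^n-q^{n-1})\,\mathbf{1}[\alpha(A)=\alpha(B)]$. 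Since both $\alpha(A)$ and $\alpha(B)$ have constant term $1$, this equality is equivalent (for $\deg A,\deg B\le n$) to $A/B\in T^{\mathbb Z}$, matching the congruence in the definition of $M_S$.

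Finally I would multiply by the Vandermonde $\sum_\tau\sign(\tau)\prod_i q^{(\tau(i)-1)\alpha_i}$ and extract the coefficient of $q^{\sum_i\alpha_i d_i}$. For each contributing $\tau$ the degrees are pinned down as $\deg f_i=d_i-\tau(i)+1$ for $i\in S$ and $\deg f_i=n+\tau(i)-2-d_i$ for $i\notin S$, so
\[
\deg A \le \sum_{i\in S}d_i-\binom{|S|}{2}, \qquad \deg B \le \sum_{i\notin S}(n-1-d_i)+\binom{\rp+\rm}{2}-\binom{|S|}{2},
\]
the maxima being attained when $\tau(S)=\{1,\dots,|S|\}$. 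The two stated hypotheses are precisely that both of these bounds are $\le n-1$, so the clean orthogonality applies uniformly in $\tau$; since individual $\deg f_i$ are further bounded by $\deg A$ or $\deg B$ (the remaining summands being non-negative), the $L$-function degree restriction $\deg f_i\le n-1$ excludes no term contributing to this coefficient. Matching the surviving sum term-by-term against the expansion of $M_S$ yields the claimed identity. The main obstacle is this final degree bookkeeping: verifying that the two seemingly ad hoc numerical quantities in the hypothesis are exactly $\max_\tau\deg A$ and $\max_\tau\deg B$ over the permutations compatible with a given $d$, and that they simultaneously bound the individual $\deg f_i$ so that the $L$-function truncation is invisible.
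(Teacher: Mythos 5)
Your proof is correct and follows essentially the same route as the paper's: apply the functional equation to the factors indexed by $S^c$ to produce $\epsilon_\chi^{\rp-|S|}=\mu^{(\rp-|S|)/m}$, expand, use the inclusion-exclusion between even characters mod $x^{n+1}$ and mod $x^n$ to reduce to the diagonal condition $\prod_{i\in S}f_i/\prod_{i\notin S}f_i\in T^{\mathbb Z}$ once $\deg A,\deg B\le n-1$, and finally track how multiplying by the Vandermonde shifts the effective degree bounds by $\binom{|S|}{2}$ and $\binom{\rp+\rm}{2}-\binom{|S|}{2}$. Your explicit identification of $\max_\tau\deg A$ and $\max_\tau\deg B$ with the two quantities in the hypothesis makes the final bookkeeping slightly more transparent than the paper's terse ``increased by at least / reduced by at most'' phrasing, but the content is identical.
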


\begin{proof} We have \[\epsilon_\chi^{-\rm}\prod_{i=1}^{\rp+\rm} L(1/2- \alpha_i, \chi)  =\epsilon_\chi^{ \rp+\rm - |S| - \rm} \prod_{i\in S} L(1/2 -\alpha_i, \chi) \prod_{i \notin S} q^{ (n-1) \alpha_i} L (1/2 + \alpha_i, \overline{\chi})  .\] Because $\rp-|S|$ is divisible by $m$, $\epsilon_\chi^{\rp -|S|} = \mu^{ \frac{ \rp-|S|}{m}}$.  Thus 

\[\sum_{\chi\in S_{n,q}} \epsilon_\chi^{-\rm}\prod_{i=1}^{\rp+\rm} L(1/2- \alpha_i, \chi)  \] \[  =\mu^{ \frac{ \rp-|S|}{m}} \prod_{i\notin S} q^{ (n-1) \alpha_i } \sum_{ \substack{ f_1,\dots, f_{\rp+\rm} \in \mathbb F_q[T]\\ \textrm{monic}  }}  \sum_{\chi\in S_{n,q}} \chi\left( \prod_{i \in S} f_i\right) \overline{\chi} \left(\prod_{i \notin S} f_i\right)  \prod_{i \in S} q^{ (-1/2 + \alpha_i)  \deg f_i }  \prod_{i\notin S} q^{( -1/2 - \alpha_i ) \deg f_i  }.\]

Now \[  \sum_{\chi\in S_{n,q}} \chi \left( \prod_{i \in S} f_i\right) \overline{\chi} \left(\prod_{i \notin S} f_i\right)  \] is equal to the sum over all even characters of $\left(\mathbb F_q[x]/x^{n+1}\right)^\times$ minus the sum over all even characters of $\left(\mathbb F_q[x]/x^n \right)^\times$. The characters modulo $x^n$ depend on the leading $n$ coefficients of the polynomial. Hence both sums vanish unless the leading $n$ coefficients are equal.  Thus if $\deg\prod_{i\in S} f_i, \deg \prod_{i \notin S} f_i \leq n-1$, there are only $n$ coefficients, and so both sums cancel unless $\prod_{i \in S} f_i / \prod_{i \notin S} f_i \in T^{\mathbb Z}$, in which case the sum over characters is $q^n - q^{n-1}$.

This occurs precisely in the coefficients $q^ {\sum_i \alpha_i d_i}$ where $\sum_{i \in S} d_i,  \sum_{i \notin S} (n-1-d_i) \leq n-1$.  Hence for $d_i$ satisfying those inequalities, the coefficients of $q^ {\sum_i \alpha_i d_i}$  in $\sum_{\chi\in S_{n,q}} \epsilon_\chi^{-\rm} \prod_{i=1}^{\rp+\rm} L(1/2- \alpha_i, \chi) $ and  \[ (q^n- q^{n-1}) \mu^{ \frac{ \rp-|S|}{m}} \prod_{i\notin S} q^{ (n-1) \alpha_i } \sum_{ \substack{ f_1,\dots, f_{\rp+\rm} \in \mathbb F_q[T]\\ \textrm{monic}\\ \prod_{i\in S} f_i / \prod_{i\notin S} f_i \in T^{\mathbb Z}  }}  \prod_{i \in S} q^{ (-1/2+ \alpha_i) \deg f_i} \prod_{i\notin S} q^{( -1/2 - \alpha_i ) \deg f_i }\] are equal.

Multiplying the monomial $q^ {\sum_i \alpha_i d_i}$  by the Vandermonde determinant produces a sum of monomials. In each monomial,  $\sum_{i \in S} d_i$ is increased by at least $ { |S| \choose 2}$ and $\sum_{i \notin S} (n-1-d_i ) $ is reduced by at most ${ \rp+\rm \choose 2} - {|S| \choose 2}$.  Hence the identity in the multiplied terms is satisfied as long as $\sum_{i \in S } d_i - { |S|\choose 2} \leq n-1$ and $\sum_{i \notin S} (n-1-d_i) + { \rp+\rm \choose 2} - {|S| \choose 2} \leq n-1$.  \end{proof}

\subsection{ Additional results}

Here we prove some additional results that are not necessary to prove Theorem~\ref{main} but may be helpful to interpret it. We describe the general behavior of \eqref{eq-main-right} and compute it in a special case.

 \begin{lemma}\label{leading-term}  \begin{enumerate} 
  
 \item The holomorphic function \eqref{eq-main-right}  evaluated at $\alpha_1 = \dots = \alpha_{\rp+\rm} =0$ (where it is defined by analytic continuation, see Remark~\ref{vandermonde-remark}) is a polynomial in $n$ of degree $\rp\rm$ whose leading term is \begin{equation}\label{CFKRS-polynomial-leading}   a_{\rp,\rm}g_{\rp,\rm} n^{\rp\rm}/(\rp\rm)!  \end{equation} where $g_{\rp,\rm}$ is the random matrix factor \[ g_{\rp,\rm} = (\rp\rm)! \prod_{j=0}^{\rp-1}  j!/ (j+\rm)! \] and $a_{\rp,\rm}$ is the Euler product \begin{equation}\label{CFKRS-arithmetic-factor}  (1- q^{-1})^{\rp\rm} (1- q^{-1/2})^{\rp+\rm}  \prod_{ \substack { \pi \in \mathbb F_q[T]  \\ \textrm{monic} \\ \textrm{irreducible} \\ \pi \neq T}} \left( ( 1- |\pi|^{-1} )^{\rp\rm} \sum_{e \in \mathbb N} { e + \rp-1 \choose \rp-1} {e + \rm-1 \choose \rm-1}  |\pi|^{-e} \right).\end{equation}
(In this Euler product, $(1- q^{-1})^{\rp\rm} (1- q^{-1/2})^{\rp+\rm} $ may be viewed as the Euler factor at $T$. )

 \item  On the portion of the imaginary axis where $\alpha_1,\dots,\alpha_{\rp+\rm}$ are distinct modulo $2\pi i /\log q$, the individual terms \[ \sum_{ \substack{ f_1,\dots, f_{\rp+\rm} \in \mathbb F_q[T] \\ \textrm{monic} \\ \prod_{i \in S} f_i / \prod_{i\notin S} f_i \in T^{\mathbb Z} }}  \prod_{i\in S} |f_i|^{ -\frac{1}{2} +\alpha_i} \prod_{i \notin S} |f_i|^{ - \frac{1}{2} - \alpha_i}\] of \eqref{eq-main-right} are holomorphic so the sum \eqref{eq-main-right} is simply a linear combination of terms $\prod_{i \notin S} q^{ \alpha_i (n-1)} $, i.e. a quasiperiodic function of $n$. \end{enumerate}
 \end{lemma}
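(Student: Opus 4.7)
Both claims flow from the explicit Euler product factorization of the inner sum
\[
I_S(\alpha) := \sum_{\substack{f_1,\dots,f_{\rp+\rm}\ \text{monic}\\ \prod_{i\in S}f_i/\prod_{i\notin S}f_i\in T^{\mathbb Z}}}\prod_{i\in S}|f_i|^{-\frac12+\alpha_i}\prod_{i\notin S}|f_i|^{-\frac12-\alpha_i}
\]
already derived in the proof of Lemma~\ref{Euler-bounds}. Since the constraint imposes nothing at $\pi=T$ and imposes equality of $\pi$-valuations at every $\pi\neq T$, and since $\prod_{\pi\neq T}(1-|\pi|^{-s})^{-1} = (1-q^{-s})/(1-q^{1-s})$, the Euler product of $I_S$ factors as
\[
I_S(\alpha) \;=\; \frac{E_S(\alpha)}{\prod_{(i_1,i_2)\in S\times S^c}(1-q^{\alpha_{i_1}-\alpha_{i_2}})},
\]
with $E_S$ a convergent Euler product, holomorphic and non-zero near $\alpha=\mathbf 0$, and with $E_S(\mathbf 0)$ independent of $S$. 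A direct Euler-product comparison identifies $E_S(\mathbf 0)$ with the arithmetic factor $a_{\rp,\rm}$ of \eqref{CFKRS-arithmetic-factor} after collecting the $T$-local factors.

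For part (2), on the locus where all $\alpha_i$ are imaginary and pairwise distinct modulo $2\pi i/\log q$, none of the denominators $1-q^{\alpha_{i_1}-\alpha_{i_2}}$ vanishes, so each $I_S(\alpha)$ is individually holomorphic and independent of $n$. The only $n$-dependence in \eqref{eq-main-right} is therefore the prefactor $\prod_{i\notin S}q^{\alpha_i(n-1)}$, and the sum is a $\mathbb C$-linear combination of at most $\binom{\rp+\rm}{\rm}$ such exponentials with $\alpha$-dependent coefficients, i.e.\ a quasi-periodic function of $n$.

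For part (1), substitute $x_i=q^{\alpha_i}$ and plug in the factorization to write the main term (up to subleading corrections) as
\[
a_{\rp,\rm}\cdot R_n(x) + O(n^{\rp\rm-1}),\qquad R_n(x):=\sum_{|S|=\rm}\Bigl(\prod_{i\notin S}x_i^{n-1}\Bigr)\prod_{(i_1,i_2)\in S\times S^c}\frac{1}{1-x_{i_1}/x_{i_2}},
\]
where the error absorbs the vanishing of $E_S(x)-E_S(\mathbf 1)$ at $x=\mathbf 1$ to first order (which cuts one power of $n$ from the maximum pole order $\rp\rm$), and the cancellation of poles described in Remark~\ref{vandermonde-remark} ensures that $\lim_{x\to\mathbf 1}R_n(x)$ exists. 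The evaluation of this limit is the main computational obstacle. Writing $x_i^{n-1} = \exp((n-1)\log x_i)$ and performing an $(\rp+\rm)$-fold contour integration (or, equivalently, identifying $\lim_{x\to\mathbf 1}R_n(x)$ with the well-known CFKRS unitary matrix average) shows that it is a polynomial in $n-1$ of degree exactly $\rp\rm$, with leading coefficient the Selberg / Barnes-$G$ constant $g_{\rp,\rm}/(\rp\rm)!$, where $g_{\rp,\rm}=(\rp\rm)!\prod_{j=0}^{\rp-1}j!/(j+\rm)!$. This identity can be proved by induction on $\rp+\rm$ using the Weyl denominator formula, or by directly localizing the residue via $\Delta(z)^2=\Delta(z_S)^2\Delta(z_{S^c})^2\prod_{(i,j)\in S\times S^c}(z_i-z_j)^2$. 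Combined with $E_S(\mathbf 0)=a_{\rp,\rm}$, this yields the leading term \eqref{CFKRS-polynomial-leading}.
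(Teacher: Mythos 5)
Your proposal follows the same path as the paper's proof: factor out the singular product $\prod_{i_1\in S,\,i_2\notin S}(1-q^{\alpha_{i_1}-\alpha_{i_2}})$ to expose a convergent, holomorphic Euler product $E_S$, use the order-$\rp\rm$ vanishing of the denominator together with the fact that Taylor coefficients of $\prod_{i\notin S}q^{\alpha_i(n-1)}$ have controlled degree in $n$ to prove polynomiality of degree at most $\rp\rm$, identify $a_{\rp,\rm}=E_S(\mathbf 0)$, and evaluate the resulting Weyl-type sum via the CFKRS/Weyl dimension formula to extract the leading coefficient (part (2) is treated the same way). The only difference is that the paper carries out explicitly the two computations you cite as known -- the Euler factor of $a_{\rp,\rm}$ at each prime (counting solutions to $\sum_{i\in S}e_i=\sum_{i\notin S}e_i=e$) and the Weyl dimension formula for the representation of $GL_{\rp+\rm}$ with highest weight $(n-1,\dots,n-1,0,\dots,0)$ -- so no gap.
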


\begin{proof} It follows from Lemma~\ref{Euler-bounds}  that \begin{equation}\label{holomorphic-mt-factor} \left( \prod_{i_1 \in S} \prod_{i_2 \notin S}\left(1 - q^{\alpha_{i_1} - \alpha_{i_2}}\right)\right) \sum_{ \substack{ f_1,\dots, f_{\rp+\rm} \in \mathbb F_q[T] \\ \textrm{monic} \\ \prod_{i \in S} f_i  /  \prod_{i\notin S} f_i \in T^{\mathbb Z}} }  \prod_{i \in S} q^{ (-1/2 + \alpha_i)  \deg f_i }  \prod_{i\notin S} q^{( -1/2 - \alpha_i ) \deg f_i  } ,\end{equation} when written as a power series in $q^{-\alpha_i}$, converges for $\alpha_i$ in a neighborhood of the imaginary axis. Thus  \[ \sum_{ \substack{ f_1,\dots, f_{\rp+\rm} \in \mathbb F_q[T] \\ \textrm{monic} \\ \prod_{i \in S} f_i  /  \prod_{i\notin S} f_i \in T^{\mathbb Z}} }  \prod_{i \in S} q^{ (-1/2 + \alpha_i)  \deg f_i }  \prod_{i\notin S} q^{( -1/2 - \alpha_i ) \deg f_i  } \] is equal to the quotient of a holomorphic function in this neighborhood by \[ \left( \prod_{i_1 \in S} \prod_{i_2 \notin S}\left(1 - q^{\alpha_{i_1} - \alpha_{i_2}}\right)\right) .\] 

We will use that property to prove both parts of this lemma.

To prove part (2), if $\alpha_1,\dots,\alpha_{\rp+\rm}$ are distinct modulo $2\pi i/\log q$ then the denominators $ \prod_{i_1 \in S} \prod_{i_2 \notin S}\left(1 - q^{\alpha_{i_1} - \alpha_{i_2}}\right)$ are nonvanishing and so the terms in the sum over $S$ in Theorem \ref{main} are individually holomorphic in a neighborhood of $\alpha_1,\dots,\alpha_{\rp+\rm}$. This means summing them as meromorphic functions and analytically continuing is the same as summing them normally. Thus \eqref{eq-main-right} is simply a linear combination of the functions $\prod_{i \notin S} q^{ \alpha_i (n-1)} $ for distinct subsets $S$, i.e. it is a quasiperiodic function of $n$.

To prove part (1), it is simplest to view each $\alpha_i$ as a linear function of a distinct variable $t$ and set $t=0$. Because each term $(1 - q^{\alpha_{i_1} - \alpha_{i_2}})$ vanishes to order $1$ at $t=0$, the product $ \prod_{i_1 \in S} \prod_{i_2 \notin S}\left(1 - q^{\alpha_{i_1} - \alpha_{i_2}}\right)$ vanishes to order $|S| (\rp+\rm-|S|) = \rp\rm$  at $t=0$. Thus  \[ \sum_{ \substack{ f_1,\dots, f_{\rp+\rm} \in \mathbb F_q[T] \\ \textrm{monic} \\ \prod_{i \in S} f_i  /  \prod_{i\notin S} f_i \in T^{\mathbb Z}} }  \prod_{i \in S} q^{ (-1/2 + \alpha_i)  \deg f_i }  \prod_{i\notin S} q^{( -1/2 - \alpha_i ) \deg f_i  } \] is a holomorphic function of $t$ divided by $t^{\rp\rm}$. On the other hand, the coefficient of $t^j$ in the Taylor series for $\prod_{i \notin S} q^{ \alpha_i (n-1)}$ is a polynomial in $n$ of degree $\leq j$. Multiplying by a holomorphic function in $t$ preserves this property, as does summing, so all told the main term is a power series in $t$ whose $j$th coefficient is a polynomial in $n$ of degree $\leq j$, all divided by $t^{\rp\rm}$. The value at $t=0$ is the coefficient of $t^{\rp\rm}$ in the numerator, which is a polynomial in $n$ of degree $\leq \rp\rm$. 

Furthermore, we can calculate the leading coefficient of this polynomial in $n$. From the proof of polynomiality, we can see that the holomorphic function \eqref{holomorphic-mt-factor} enters into the calculation only by its value when $\alpha_1,\dots,\alpha_{\rp+\rm}=0$. This value is independent of $S$, by permuting the variables.  Call it $a_{\rp,\rm}$. Because only the value at $\alpha_1,\dots,\alpha_{\rp+\rm}=0$ is significant, we can replace the term  \[ \sum_{ \substack{ f_1,\dots, f_{\rp+\rm} \in \mathbb F_q[T] \\ \textrm{monic} \\ \prod_{i \in S} f_i  /  \prod_{i\notin S} f_i \in T^{\mathbb Z}} } \prod_{i \in S} q^{ (-1/2 + \alpha_i)  \deg f_i }  \prod_{i\notin S} q^{( -1/2 - \alpha_i ) \deg f_i  } \ \] with \[ \frac{ a_{\rp,\rm}}{  \prod_{i_1 \in S} \prod_{i_2 \notin S}\left(1 - q^{\alpha_{i_1} - \alpha_{i_2}}\right)}  \] without affecting the leading coefficient of the polynomial in $n$. After replacing these terms in \eqref{eq-main-right}, we obtain.

\begin{equation}\label{random-matrix-sum}  a_{\rp,\rm} \sum_{\substack{  S \subseteq \{1,\dots,\rp+\rm\} \\ |S|=r}}  \frac{ \prod_{i \notin S} q^{ \alpha_i (n-1)}}{   \prod_{i_1 \in S} \prod_{i_2 \notin S}\left(1 - q^{\alpha_{i_1} - \alpha_{i_2}}\right)}  \end{equation}

The sum in \eqref{random-matrix-sum} is also the Weyl character formula for the representation of $GL_{\rp+\rm}$ with highest weights $0$ repeated $r$ times and $n-1$ repeated $s$ times, evaluated at the diagonal element with eigenvalues $q^{\alpha_i}$. So when $\alpha_i=0$, the sum in \eqref{random-matrix-sum} is simply the dimension of this representation, which by the Weyl dimension formula is a polynomial in $n$ with leading term $ n ^{\rp\rm} \prod_{j=0}^{\rp-1}  j!/ (j+\rm)! $. Multiplying by $a_{\rp,\rm}$, the leading term in \eqref{random-matrix-sum} is exactly \eqref{CFKRS-polynomial-leading}, except that we have to prove the Euler product formula \eqref{CFKRS-arithmetic-factor} for $a_{\rp,\rm}$. 

To do this, we follow the method of Lemma~\ref{Euler-bounds} and use the Euler products \eqref{analysis-L-product} and \eqref{analysis-easy-product}.


When we combine these two Euler products, the factor for every $\pi \neq T$ is \[ \left(  \prod_{i_1 \in S, i_2\notin S} (1 - |\pi|^{-1+ \alpha_{i_1} - \alpha_{i_2}} ) \right)  \sum_{\substack{ d_1,\dots, d_{\rp+\rm} \in \mathbb N \\  \sum_{i\in S} d_i = \sum_{i \notin S} d_i}} |\pi|^{ - \sum_{i\in S} d_i +   \sum_{i \in S} \alpha_i d_i - \sum_{i \notin S} \alpha_i d_i}  = 1 + O ( |\pi|^{-2} ) \] so this Euler product converges. When we specialize $\alpha_1,\dots, \alpha_{\rp+\rm}=0$, the Euler factor for $\pi \neq T$ becomes
 \[ (1 - |\pi|^{-1} )^{\rp\rm}  \sum_{\substack{ d_1,\dots, d_{\rp+\rm} \in \mathbb N \\  \sum_{i\in S} d_i = \sum_{i \notin S} d_i}} |\pi|^{ - \sum_{i\in S} d_i } .\]

For each natural number $e$, there are ${e + \rp-1 \choose \rp-1}$ nonnegative integer solutions to $\sum_{i \in S} d_i = e$ and  ${e + \rm-1 \choose \rm-1}$ nonnegative integer solutions to $\sum_{i \notin S} d_i = e$ and  so the coefficient of $\pi^e$ in $ \sum_{\substack{ d_1,\dots, d_{\rp+\rm} \in \mathbb N \\  \sum_{i\in S} d_i = \sum_{i \notin S} d_i}} |\pi|^{ - \sum_{i\in S} d_i } $ is ${ e + \rp-1 \choose \rp-1} {e + \rm-1 \choose \rm-1}$.

When we specialize $\alpha_1,\dots,\alpha_{\rp+\rm}=0$, the Euler factor for $\pi=T$ becomes \[(1- q^{-1})^{\rp\rm} (1- q^{-1/2})^{\rp+\rm}.\] Combining these Euler factors, we get \eqref{CFKRS-arithmetic-factor}.

 \end{proof}
 
 \begin{lemma}\label{fourth-moment}    When $\rp=\rm=2$, \eqref{eq-main-right} specializes to
 
  \begin{equation}\label{fourth-main-term}\sum_{\substack{  S \subseteq \{1,\dots,4\} \\ |S|=2}} \prod_{i \notin S} q^{ \alpha_i (n-1)} \frac{ 1- q^{ - 1 - \sum_{i \in S} \alpha _i + \sum_{i \not \in S } \alpha_i}}{ 1- q^{  -2-  \sum_{i \in S} \alpha _i + \sum_{i \not \in S } \alpha_i}} \prod_{i \in S} \frac{1}{ 1- q^{-1/2 - \alpha_i}} \prod_{i \not \in S} \frac{1}{ 1- q^{-1/2 + \alpha_i}} \prod_{i_1 \in S, i_2 \not \in S} \frac{ 1- q^{ - 1 -\alpha_{i_1 } + \alpha_{i_2}}}{1 - q^{-\alpha_{i_1} + \alpha_{i_2}}}.\end{equation}

 \end{lemma}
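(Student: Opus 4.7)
The plan is to evaluate the inner sum in \eqref{eq-main-right} for each fixed subset $S\subseteq\{1,2,3,4\}$ with $|S|=2$ as an Euler product over the monic irreducibles $\pi$ of $\mathbb F_q[T]$, and then sum the resulting closed expression over $S$. The constraint $\prod_{i\in S}f_i/\prod_{i\notin S}f_i\in T^{\mathbb Z}$ is automatic at $\pi=T$ (the $T$-valuations of the $f_i$ are unconstrained) but forces $\sum_{i\in S}v_\pi(f_i)=\sum_{i\notin S}v_\pi(f_i)$ at every $\pi\neq T$, so the Euler product splits naturally into these two kinds of factors. The $T$-factor is a product of four unconstrained geometric series, yielding (up to the sign convention of the statement) $\prod_{i\in S}(1-q^{-1/2+\alpha_i})^{-1}\prod_{i\notin S}(1-q^{-1/2-\alpha_i})^{-1}$.

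For $\pi\neq T$, writing $x_i=|\pi|^{-1/2+\alpha_i}$ for $i\in S$ and $y_j=|\pi|^{-1/2-\alpha_j}$ for $j\notin S$, the Euler factor is the diagonal four-variable series $\sum_{a_1+a_2=b_1+b_2}x_1^{a_1}x_2^{a_2}y_1^{b_1}y_2^{b_2}$. The key input, which is the only non-bookkeeping step in the proof, is the closed form
\[
\sum_{\substack{a_1,a_2,b_1,b_2\geq 0\\ a_1+a_2=b_1+b_2}}x_1^{a_1}x_2^{a_2}y_1^{b_1}y_2^{b_2}=\frac{1-x_1x_2y_1y_2}{\prod_{k,l\in\{1,2\}}(1-x_ky_l)}.
\]
This I would prove either by extracting the constant-in-$z$ coefficient of $\prod_{k=1,2}(1-x_kz)^{-1}\prod_{l=1,2}(1-y_lz^{-1})^{-1}$ via residues at $z=y_1,y_2$ (giving a sum of two terms whose common numerator telescopes using $(y_1-y_2)(1-x_1x_2y_1y_2)$), or directly by clearing the denominator $\prod(1-x_ky_l)$ and checking the coefficient identity combinatorially. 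Substituting $x_1x_2y_1y_2=|\pi|^{-2+\sum_{i\in S}\alpha_i-\sum_{j\notin S}\alpha_j}$ and $x_ky_l=|\pi|^{-1+\alpha_k-\alpha_l}$ then converts the Euler factor at $\pi\neq T$ into a ratio of five $(1-|\pi|^{-\ast})$ terms.

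To take the product over $\pi\neq T$, I would use the elementary identity $\prod_\pi(1-|\pi|^{-s})=1-q^{1-s}$ (equivalent to $\zeta_{\mathbb F_q[T]}(s)=(1-q^{1-s})^{-1}$) and divide out the $|T|=q$ Euler factor to obtain $\prod_{\pi\neq T}(1-|\pi|^{-s})=(1-q^{1-s})/(1-q^{-s})$. Applying this once with $s=2-\beta_S$ (where $\beta_S=\sum_{i\in S}\alpha_i-\sum_{j\notin S}\alpha_j$) to the numerator, and four times with $s=1-\alpha_k+\alpha_l$ to the denominator factors, yields a closed rational expression in the $q^{\alpha_i}$ and $q$. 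Convergence of the Euler product in an appropriate region of the $\alpha_i$ is justified exactly as in Lemma~\ref{Euler-bounds}, which already handles the general $\rp,\rm$ case.

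Combining the $T$-factor with the rearranged $\pi\neq T$ product, multiplying by $\prod_{i\notin S}q^{\alpha_i(n-1)}$, and summing over the $\binom{4}{2}=6$ subsets $S$ gives \eqref{fourth-main-term}. The main potential pitfall is not conceptual but notational: careful tracking of the sign conventions in $\beta_S$ and in the $\alpha_k-\alpha_l$ exponents when passing from $\prod(1-|\pi|^{-s})$ to $(1-q^{1-s})/(1-q^{-s})$ for each of the five factors, and keeping track of which of $S$ or $S^c$ each index is labelled to match the form displayed in the lemma.
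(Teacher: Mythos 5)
Your proposal follows essentially the same route as the paper's proof: factor the inner sum as an Euler product, split off the $T$-factor (a product of four unconstrained geometric series), reduce the Euler factor at $\pi\neq T$ to the closed form $(1-x_1x_2y_1y_2)/\prod_{k,l}(1-x_ky_l)$, and convert back to $q$-exponents via $\zeta_{\mathbb F_q[T]}$. The only variation is the derivation of that closed form: the paper counts solutions of the $2\times 2$ transportation-type system $e_1=a+b,\ e_2=c+d,\ e_3=a+c,\ e_4=b+d$ to get $\min(e_i)+1$ and then telescopes, whereas you extract the constant-in-$z$ coefficient by residues at $z=y_1,y_2$; both give the same numerator $(y_1-y_2)(1-x_1x_2y_1y_2)$ and are equivalent elementary arguments. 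Your caution about sign conventions is warranted (note also that the paper's own displayed $\eqref{fourth-polynomial-sum}$ uses the opposite sign assignment to $S$ from $\eqref{eq-main-right}$; this is harmless since the sum over $|S|=2$ is $S\leftrightarrow S^c$ symmetric), but the plan is sound and would yield $\eqref{fourth-main-term}$.
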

 
 \begin{proof} Let us consider first the term where $S = \{1,2\}$.
 
 We have 
 
 \begin{equation}\label{fourth-polynomial-sum} \sum_{ \substack{ f_1,f_1,f_3,f_4 \in \mathbb F_q[T] \\ \textrm{monic} \\ f_1f_2/f_3f_4 \in T^{\mathbb Z} }}  |f_1|^{-\frac{1}{2} - \alpha_1} |f_1|^{ - \frac{1}{2} - \alpha_2} |f_3|^{-\frac{1}{2} + \alpha_3 } |f_4| ^{ -\frac{1}{2} + \alpha_4} \end{equation}
 \[ = \frac{\sum_{ \substack{ f_1,f_1,f_3,f_4 \in \mathbb F_q[T] \\ \textrm{monic} \\ \textrm{prime to } T\\ f_1f_2=f_3f_4}}  |f_1|^{-\frac{1}{2} - \alpha_1} |f_1|^{ - \frac{1}{2} - \alpha_2} |f_3|^{-\frac{1}{2} + \alpha_3 } |f_4| ^{ -\frac{1}{2} + \alpha_4}} { (1 - q^{-\frac{1}{2} - \alpha_1}) (1 - q^{-\frac{1}{2} - \alpha_2}) (1 - q^{-\frac{1}{2} +\alpha_3})(1 - q^{-\frac{1}{2} + \alpha_4 })}  \]
 
 Now

 \begin{equation}\label{fourth-noT-sum}   \hspace*{-1cm} \sum_{ \substack{ f_1,f_1,f_3,f_4 \in \mathbb F_q[T] \\ \textrm{monic} \\ \textrm{prime to } T\\ f_1f_2=f_3f_4}  } |f_1|^{-\frac{1}{2} - \alpha_1}|f_2|^{ - \frac{1}{2} - \alpha_2} |f_3|^{-\frac{1}{2} + \alpha_3 } |f_4| ^{ -\frac{1}{2} + \alpha_4} = \prod_{ \substack { \pi \in \mathbb F_q[T] \\ \textrm{monic} \\ \textrm{irreducible} \\ \pi \neq T} }  \sum_{\substack{ e_1,e_2,e_3,e_4 \in \mathbb N \\ e_1 + e_2 = e_3 + e_4}} |\pi| ^{ - \frac{e_1+e_2+e_3+e_4}{2} - \alpha_1 e_1 - \alpha_2 e_2 + \alpha_3 e_3 + \alpha_4 e_4}  \end{equation}
 
 Given $e_1,e_2,e_3,e_4 \in \mathbb N$ with $e_1+ e_2 = e_3 +e_4$, the number of ways of writing $e_1 = a + b$, $e_2 =c+d$, $e_3 = a+c$, $e_4 = b+d$ with $a,b,c,d \in \mathbb N$ is $\min(e_1,e_2,e_3,e_4) + 1$ as we must have $b= e_1 -a $, $c = e_3-a$, $d = e_2-e_3 +a = e_4 - e_1 + a$ and the valid $a$ are in the interval $[ \max(e_3-e_2,0), \min(e_1,e_3)]$ whose length is $\min(e_1,e_2,e_3,e_4)$.
 
 Hence the number of solutions for $e_1,e_2,e_3,e_4$ minus the number of solutions for $e_1-1,e_2-1,e_3-1,e_4-1$ is exactly $1$ if $e_1,e_2,e_3,e_4$ are nonnegative with $e_1 + e_2 = e_3 + e_4$ and zero otherwise. This gives 
 
 \[ \sum_{\substack{ e_1,e_2,e_3,e_4 \in \mathbb N \\ e_1 + e_2 = e_3 + e_4}} |\pi| ^{ - \frac{e_1+e_2+e_3+e_4}{2} - \alpha_1 e_1 - \alpha_2 e_2 + \alpha_3 e_3 + \alpha_4 e_4} \] \[ = \frac{ 1 - |\pi|^{ -2 - \alpha_1 - \alpha_2 + \alpha_3 + \alpha_4 }}{ (1 - |\pi|^{-1- \alpha_1 +\alpha_3}) (1- |\pi|^{-1- \alpha_1 + \alpha_4}) (1- |\pi|^{-1-\alpha_2 + \alpha_3}) ( 1- |\pi|^{ -1-\alpha_2 +\alpha_4})}.\]
 
 Hence using the zeta function of $\mathbb F_q[T]$, \eqref{fourth-noT-sum} is 
 \[\frac{ 1 - q^{ -1 - \alpha_1 - \alpha_2 + \alpha_3 + \alpha_4}}  { 1 - q^{ -2 - \alpha_1 - \alpha_2 + \alpha_3 + \alpha_4}}\prod_{i_1 \in \{1,2\}, i_2 \in \{3,4\} } \frac{ 1- q^{ - 1 -\alpha_{i_1 } + \alpha_{i_2}}}{1 - q^{-\alpha_{i_1} + \alpha_{i_2}}}\]

 and thus \eqref{fourth-polynomial-sum} is  
 \[ \frac{1} { (1 - q^{-\frac{1}{2} - \alpha_1}) (1 - q^{-\frac{1}{2} - \alpha_2}) (1 - q^{-\frac{1}{2} +\alpha_3})(1 - q^{-\frac{1}{2} + \alpha_4 })}   \frac{ 1 - q^{ -1 - \alpha_1 - \alpha_2 + \alpha_3 + \alpha_4}}  { 1 - q^{ -2 - \alpha_1 - \alpha_2 + \alpha_3 + \alpha_4}}\prod_{i_1 \in \{1,2\}, i_2 \in \{3,4\} } \frac{ 1- q^{ - 1 -\alpha_{i_1 } + \alpha_{i_2}}}{1 - q^{-\alpha_{i_1} + \alpha_{i_2}}}.\]
 
 Now if $S \neq \{1,2\}$, we get the same formula, except with the variables permuted by some fixed permutation $\sigma \in S_4$ sending $\{1,2\}$ to $S$. Summing over the possible choices of $S$, we obtain \eqref{fourth-main-term}.

  \end{proof}

\section{Conclusion}

%
%
%
%

We prove here a slightly more general version of the main theorem.

\begin{prop}\label{main1} Assume $n\geq 3$, if $n=3$ that the characteristic of $\mathbb F_q$ is not $2$ or $5$, and if $n=4$ or $5$ that the characteristic of  $\mathbb F_q$ is not $2$.

Assume Hypothesis $\operatorname{H}(n,\rp,\rm,w)$. Let $\alpha_{1},\dots,\alpha_{\rp+\rm}$ be imaginary. Let $C_{\rp,\rm}=\left(\max(\rp,\rm)+2\right)^{\max(\rp,\rm)+1}$  .  Then

\begin{equation}\label{main1-left} \prod_{1 \leq i_1< i_2 \leq \rp+\rm} (q^{\alpha_{i_1}} - q^{\alpha_{i_2}}) \sum_{\chi\in S_{n,q}} \epsilon_\chi^{-\rm}  \prod_{i=1}^{\rp+\rm} L(1/2- \alpha_i, \chi)  \end{equation}

\begin{equation}\label{main1-right} =  \sum_{\substack{  S \subseteq \{1,\dots,\rp+\rm\} \\ m | \rp-|S|}}(q^{n}- q^{n-1}) \mu^{ \frac{r- |S|}{m}}  \prod_{i \notin S} q^{ \alpha_i (n-1)} M_S(\alpha_1,\dots, \alpha_{\rp+\rm})  \end{equation}

\begin{equation} \label{main1-error}  + O \left(  \left( \prod_{1 \leq i_1< i_2 \leq \rp+\rm} \left| q^{\alpha_{i_1}} - q^{\alpha_{i_2}} \right|\right)    n^{\rp+\rm}  C_{\rp+\rm}^{n-1} q^{\frac{n+w}{2}}\right). \end{equation}

 \end{prop}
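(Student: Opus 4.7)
The plan is to compare the two sides coefficient-by-coefficient as Laurent polynomials in $q^{\alpha_1}, \dots, q^{\alpha_{\rp+\rm}}$. First, by Lemma~\ref{L-representation-identity}, rewrite \eqref{main1-left} as
\[ \sum_{\sigma \in S_{\rp+\rm}} \sum_{0 \leq d_1 \leq \dots \leq d_{\rp+\rm} \leq n-1} \sign(\sigma)\, q^{\sum_i (d_i+i-1)\alpha_{\sigma(i)}}\, (-1)^{\sum d_i}\, F\bigl(V_{d_1,\dots,d_\rp|d_{\rp+1},\dots,d_{\rp+\rm}}\bigr). \]
Call a tuple $(d_1,\dots,d_{\rp+\rm})$ \emph{good} if there exists some $k$ with $k \equiv \rp \pmod{m}$, $\sum_{i=1}^k d_i \leq n-1$, and $\sum_{i=k+1}^{\rp+\rm}(n-1-d_i) \leq n-1$, and \emph{bad} otherwise. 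By Lemma~\ref{easy-rep-characterization}, the bad tuples are precisely those whose corresponding sheaf fails to appear as a summand of any $L_{\univ}^{\otimes a} \otimes L_{\univ}^{\vee \otimes b}$ with $0 \leq a,b \leq n-1$; these are the tuples to which Hypothesis $\operatorname{H}(n,\rp,\rm,w)$ applies.

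For the good contributions, apply Lemma~\ref{cancellation-agreement} to identify the LHS coefficient of each good monomial with the corresponding $S$-contribution to \eqref{main1-right}. Specifically, for a good tuple with witness $k$, set $S = \sigma(\{1,\dots,k\})$, so $|S| = k$ and $m \mid \rp - |S|$. Rewriting the goodness conditions in terms of the shifted exponents $d_i' = d_{\sigma^{-1}(i)} + \sigma^{-1}(i) - 1$ --- which, via the telescoping identities $\sum_{i \in S} d_i' - \binom{|S|}{2} = \sum_{j=1}^k d_j$ and $\sum_{i \notin S}(n-1-d_i') + \binom{\rp+\rm}{2} - \binom{|S|}{2} = \sum_{j=k+1}^{\rp+\rm}(n-1-d_j)$ --- recovers exactly the inequalities of Lemma~\ref{cancellation-agreement}, so the Lemma applies and the coefficients agree.

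For the bad tuples, the hypothesis forces $H^j_c(\Prim_{n,\overline{\mathbb F}_q}, V_{d_1,\dots,d_\rp|d_{\rp+1},\dots,d_{\rp+\rm}}(L_{\univ}(1/2))) = 0$ for $j > n+w$, and Artin's affine vanishing kills degrees $j < n$. Deligne's weight bound from Weil~II yields $|\tr(\Frob_q, H^j_c)| \leq q^{j/2} \dim H^j_c$, which combined with the Betti-number estimate of Lemma~\ref{Betti-number-bound} gives
\[ \bigl|F\bigl(V_{d_1,\dots,d_\rp|d_{\rp+1},\dots,d_{\rp+\rm}}\bigr)\bigr| = O\bigl(q^{(n+w)/2}\, (2 + \max(\rp,\rm))^{n+\sum d_i}\bigr). \]
Summing over the $O(n^{\rp+\rm})$ bad tuples, with $\sum d_i$ bounded and the $\sigma$-sum contributing only a bounded permutation factor per monomial, produces the error term \eqref{main1-error}. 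Separately, the coefficients of \eqref{main1-right} corresponding to monomials violating Lemma~\ref{cancellation-agreement}'s inequalities for every valid $S$ are bounded using Lemma~\ref{shifted-Euler-bounds}, whose exponential decay summed over the polyhedral region of unmatched indices gives a geometric series of the same order as \eqref{main1-error}.

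The main obstacle is the final step: the polyhedral region of unmatched indices in $M_S$ is not rectangular, and the four alternatives in the minimum of Lemma~\ref{shifted-Euler-bounds} are each sharpest in a different subregion (e.g., the $-\max_{i\in S} d_i + \min_{i\notin S} d_i$ alternative is best when the $d_i$ are spread across the boundary, while the $|\sum d_i - \binom{\rp+\rm}{2}|$ alternative dominates when $\sum d_i$ deviates symmetrically). Decomposing the tail sum into subregions where one alternative dominates, and summing the resulting decays, is the delicate bookkeeping needed to keep the total contribution within $C_{\rp,\rm}^{n-1} q^{(n+w)/2}$. The Vandermonde prefactor in \eqref{main1-left} contributes only a bounded multiplicative factor on the imaginary axis, since $|q^{\alpha_{i_1}} - q^{\alpha_{i_2}}| \leq 2$ there, so no additional estimate on it is needed.
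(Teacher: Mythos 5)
Your overall strategy — compare coefficients, match the "good" tuples via Lemma~\ref{cancellation-agreement}, bound the "bad" tuples via Hypothesis $\operatorname{H}$ plus the Betti bound, bound the unmatched main-term coefficients via Lemma~\ref{shifted-Euler-bounds} — is the same as the paper's, and your check that the goodness conditions on the unshifted $d_i$ translate, under $d_i' = d_{\sigma^{-1}(i)}+\sigma^{-1}(i)-1$, into the inequalities of Lemma~\ref{cancellation-agreement} is correct. However, your proposal has a genuine gap in how it treats the "good" tuples, and it is not a mere bookkeeping issue.

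When a tuple is good, you match the LHS coefficient $T$ with $R_{S_0}$ for \emph{one} witness $S_0$. But $R=\sum_{S:\, m\mid \rp-|S|} R_S$, so $T-R = -\sum_{S\neq S_0}R_S$, and all these leftover $R_S$ still need to be controlled. Your "separately" clause only addresses monomials that violate the inequalities for \emph{every} $S$ (the bad tuples), so for good tuples you never bound the remaining $R_S$. Two sub-cases arise. First, for $S$ that violate the inequalities (while $S_0$ satisfies them), Lemma~\ref{shifted-Euler-bounds} does give a usable bound, but you have to actually invoke it for these $S$ over all good tuples, not just over bad tuples. Second, and more seriously, a good tuple can have \emph{multiple} witnesses $S_0\neq S_0'$ both satisfying the inequalities; then the leftover $R_{S_0'}$ satisfies the inequalities, and the four alternatives in Lemma~\ref{shifted-Euler-bounds} do not, on their own, give a small bound there. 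The paper needs a separate argument (Lemmas~\ref{third-type-range}, \ref{third-type-individual}, \ref{third-type-bound}): when two distinct $S,S'$ both satisfy the cancellation range, one uses $R_S=T=R_{S'}$ and the combinatorial rigidity of the two simultaneous constraints to show that, for some choice of the two sets and some choice among the four alternatives, the coefficient is $O(n^{O(1)} q^{n/2})$. Without this step the good tuples with several witnesses leave an uncontrolled contribution.

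Two smaller inaccuracies: the assertion that "the $\sigma$-sum contribut[es] only a bounded permutation factor per monomial" is false — the paper bounds $\sum_\sigma \sign(\sigma) q^{\sum_i \alpha_i d_{\sigma(i)}}$ via the Weyl character/dimension formula as the Vandermonde times $\prod_{i_1<i_2}|d_{i_1}-d_{i_2}|/\prod_i(i-1)!$, a quantity that grows polynomially in $n$ and must be absorbed into the $n^{\rp+\rm}$ factor. And in the Betti bound for the bad-tuple sum, you must first rewrite $\det^{-\rm}\otimes\bigotimes\wedge^{d_i}$ as $\bigotimes_{i\le\rp}\wedge^{e_i}(L_{\univ})\otimes\bigotimes_{i>\rp}\wedge^{n-1-e_i}(L_{\univ}^\vee)$ so that the effective total degree is $\le (n-1)\max(\rp,\rm)$; using the raw $\sum d_i$ up to $(\rp+\rm)(n-1)$ overshoots $C_{\rp,\rm}^{n}$ unless $\min(\rp,\rm)\le 1$.
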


\begin{proof} For $d_1,\dots, d_{\rp+\rm} \in \mathbb Z$, let $T(d_1,\dots, d_{\rp+\rm})$ be the coefficient of $\prod_{i=1}^{\rp+\rm} q^{ \alpha_i  d_i}$ in \eqref{main1-left} and let $R( d_1,\dots, d_{\rp+\rm})$ be the coefficient of $\prod_{i=1}^{\rp+\rm}  q^{ \alpha_i  d_i}$ in \eqref{main1-right}.

The two sides \eqref{main1-left} and \eqref{main1-right} are antisymmetric in the variables $\alpha_1,\dots,\alpha_{\rp+\rm}$. For \eqref{main1-left} this is clear and for \eqref{main1-right}, this follows from Lemma~\ref{M-support}.  Thus we can write \eqref{main1-left} as
\[ \sum_{d_1< \dots <  d_{\rp+\rm} }  \sum_{\sigma \in S_{\rp+\rm} }  \sign(\sigma) T (d_1,\dots, d_{\rp+\rm}) q^{ \sum_{i=1}^{\rp+\rm} \alpha_i d_{\sigma(i)}}\]
and \eqref{main1-right} as 
\[ \sum_{d_1< \dots <  d_{\rp+\rm} }  \sum_{\sigma \in S_{\rp+\rm} }  \sign(\sigma) R (d_1,\dots, d_{\rp+\rm}) q^{ \sum_{i=1}^{\rp+\rm} \alpha_i d_{\sigma(i)}}\]
so the difference of \eqref{main1-left} and \eqref{main1-right} is
\[ \sum_{d_1< \dots <  d_{\rp+\rm} }  \left( T (d_1,\dots, d_{\rp+\rm}) - R (d_1,\dots, d_{\rp+\rm})\right)  \sum_{\sigma \in S_{\rp+\rm} }  \sign(\sigma)q^{ \sum_{i=1}^{\rp+\rm} \alpha_i d_{\sigma(i)}}. \]

By the Weyl character formula, \[ \frac{  \sum_{\sigma \in S_{\rp+\rm} }\sign(\sigma) q^{ \sum_{i=1}^{\rp+\rm} \alpha_i d_{\sigma(i)}}} {q^{\alpha_{i_1}} - q^{\alpha_{i_2}}  } \] is trace of a diagonal matrix with entries $q^{\alpha_1},\dots, q^{\alpha_{\rp+\rm}}$ on the irreducible representation of $GL_{\rp+\rm}$ with highest weight vector $(d_{\rp+\rm}+1-\rp-\rm, \dots, d_2-1,d_1 )$ \cite[Theorem 24.2]{FH}. Because this diagonal matrix has eigenvalues of norm $1$, the absolute value of its trace is at most the dimension of that highest weight representation, which is $ \prod_{1 \leq i_1< i_2 \leq \rp+\rm} (d_{i_2}-d_{i_1} )$ \cite[Corollary 24.6]{FH}. This implies the estimate 

\begin{equation} \label{weyl-estimate} \left| \sum_{\sigma \in S_{\rp+\rm} }\sign(\sigma) q^{ \sum_{i=1}^{\rp+\rm} \alpha_i d_{\sigma(i)}} \right|   \leq  \prod_{1\leq i_1< i_2 \leq \rp+\rm} | q^{\alpha_{i_1}} - q^{\alpha_{i_2}} |\frac{  \prod_{1 \leq i_1< i_2 \leq \rp+\rm} |d_{i_1}-d_{i_2} | }{ \prod_{i=1}^{\rp+\rm} (i-1)!} \end{equation}
 
Using \eqref{weyl-estimate}, it suffices to prove that \[ \sum_{d_1< \dots <  d_{\rp+\rm} } \left|  T (d_1,\dots, d_{\rp+\rm}) - R (d_1,\dots, d_{\rp+\rm}) \right| \frac{  \prod_{1 \leq i_1< i_2 \leq \rp+\rm} |d_{i_1}-d_{i_2} | }{ \prod_{i=1}^{\rp+\rm} (i-1)!}   \] \begin{equation}\label{main1-error-modified} = O \left(  n^{\rp+\rm}  C_{\rp+\rm}^{n-1} q^{\frac{n+w}{2}}\right) \end{equation}

Let $R_S (d_1,\dots, d_{\rp+\rm}) $ be the coefficient of $\prod_{i=1}^{\rp+\rm} q^{ \alpha_i  d_i}$ in \[(q^{n}- q^{n-1}) \ \mu^{ \frac{r- |S|}{m}}  \prod_{i \notin S} q^{ \alpha_i (n-1)} M_S(\alpha_1,\dots, \alpha_{\rp+\rm}) \] so that \begin{equation}\label{R-decomposition} R (d_1,\dots, d_{\rp+\rm}) = \sum_{\substack{  S \subseteq \{1,\dots,\rp+\rm\} \\ m | \rp-|S|}} R_S(d_1,\dots, d_{\rp+\rm}) .\end{equation}

By Lemma \ref{cancellation-agreement},  \begin{equation}\label{cancellation-equation} T( d_1,\dots,d_{\rp+\rm} ) = R_S( d_1,\dots, d_{\rp+\rm} )\end{equation} as long as \begin{equation}\label{S-cancellation-range} 0 \leq \sum_{i \in S } d_i  - { |S| \choose 2} , \hspace{15pt} \sum_{i \notin S}  (n-1-d_i) + { \rp+\rm \choose 2} - {|S| \choose 2} \leq n-1. \end{equation} 

We can distinguish three types of tuples $d_1,\dots, d_{\rp+\rm}$. The first is where \eqref{S-cancellation-range} is not satisfied for any $S$, the second where  \eqref{S-cancellation-range} is satisfied for a unique $S$, and the third where  \eqref{S-cancellation-range} is satisfied for more than one $S$.  In the first case, by \eqref{R-decomposition}, we have
\[ \left| T ( d_1,\dots,d_{\rp+\rm} ) - R ( d_1,\dots,d_{\rp+\rm} ) )  \right|  \leq \left| T ( d_1,\dots,d_{\rp+\rm} ) \right|  + \sum_{\substack{  S \subseteq \{1,\dots,\rp+\rm\} \\ m | \rp-|S|}} \left|   R_S ( d_1,\dots,d_{\rp+\rm} ) \right| .\] 
In the second case, by \eqref{R-decomposition} and \eqref{cancellation-equation}, we have
\[ \left| T ( d_1,\dots,d_{\rp+\rm} ) - R ( d_1,\dots,d_{\rp+\rm} ) )  \right|  \leq  \sum_{\substack{  S \subseteq \{1,\dots,\rp+\rm\} \\ m | \rp-|S| \\ \textrm{\eqref{S-cancellation-range} does not hold}}} \left|   R_S ( d_1,\dots,d_{\rp+\rm} ) \right| .\] 
In the third case, by \eqref{R-decomposition} and \eqref{cancellation-equation}, we have
 \[ \left| T ( d_1,\dots,d_{\rp+\rm} ) - R ( d_1,\dots,d_{\rp+\rm} ) )  \right|  \leq  \sum_{\substack{  S \subseteq \{1,\dots,\rp+\rm\} \\ m | \rp-|S| }} \left|   R_S ( d_1,\dots,d_{\rp+\rm} ) \right| \] where we have added the canceled $R_S$ term on the right side back in to simplify the expression, without affecting the validity of the inequality.
 
Combining all these, we have

 \[ \sum_{d_1< \dots <  d_{\rp+\rm} }  \left| T ( d_1,\dots,d_{\rp+\rm} ) - R ( d_1,\dots,d_{\rp+\rm} ) )  \right|  \frac{  \prod_{1 \leq i_1< i_2 \leq \rp+\rm} |d_{i_1}-d_{i_2} | }{ \prod_{i=1}^{\rp+\rm} (i-1)!}  \] 
 \begin{equation}\label{first-type-of-terms}  \leq \sum_{\substack{ d_1< \dots <  d_{\rp+\rm} \\ \textrm{\eqref{S-cancellation-range} does not hold for any }S } } \left| T ( d_1,\dots,d_{\rp+\rm} ) \right| \frac{  \prod_{1 \leq i_1< i_2 \leq \rp+\rm} |d_{i_1}-d_{i_2} | }{ \prod_{i=1}^{\rp+\rm} (i-1)!}   \end{equation}
\begin{equation}\label{second-type-of-terms} +  \sum_{\substack{  S \subseteq \{1,\dots,\rp+\rm\} \\ m | \rp-|S| }}(q^{n}- q^{n-1})  \sum_{\substack{ d_1< \dots <  d_{\rp+\rm} \\ \textrm{\eqref{S-cancellation-range} does not hold} } }   \left|   R_S ( d_1,\dots,d_{\rp+\rm} )\right|  \frac{  \prod_{1 \leq i_1< i_2 \leq \rp+\rm} |d_{i_1}-d_{i_2} | }{ \prod_{i=1}^{\rp+\rm} (i-1)!} \end{equation}
\begin{equation}\label{third-type-of-terms}  +  \sum_{\substack{  S \subseteq \{1,\dots,\rp+\rm\} \\ m | \rp-|S| }}(q^{n}- q^{n-1})  \sum_{\substack{ d_1< \dots <  d_{\rp+\rm} \\ \textrm{\eqref{S-cancellation-range} holds for }S\\ \textrm{\eqref{S-cancellation-range} holds for some }S' \neq S  }}   \left|   R_S ( d_1,\dots,d_{\rp+\rm} ) \right|  \frac{  \prod_{1 \leq i_1< i_2 \leq \rp+\rm} |d_{i_1}-d_{i_2} | }{ \prod_{i=1}^{\rp+\rm} (i-1)!} \end{equation} 

We next will prove in Lemmas \ref{first-type-bound}, \ref{second-type-bound}, and \ref{third-type-bound}, bounds for \eqref{first-type-of-terms}, \eqref{second-type-of-terms}, and \eqref{third-type-of-terms} respectively.  This gives
\[ \eqref{first-type-of-terms}+ \eqref{second-type-of-terms}+ \eqref{third-type-of-terms}\leq  O( n^{O(1)} q^{n/2} ) + O (n^{O(1)} q^{n/2}) + O \left(  n^{\rp+\rm}  C_{\rp+\rm}^{n-1} q^{\frac{n+w}{2}}\right) = O \left(  n^{\rp+\rm}  C_{\rp+\rm}^{n-1} q^{\frac{n+w}{2}}\right)\] 
since $n^{O(1)} = O(C_{\rp,\rm}^{n-1})$, which is the desired bound \eqref{main1-error-modified}.

\end{proof}

\begin{lemma}\label{third-type-range} For every $S \subseteq \{1,\dots, \rp+\rm\}$ with $m| \rp-|S|$, for each tuple $d_1< \dots < d_{\rp+\rm} \in \mathbb Z$ that satisfies \eqref{S-cancellation-range}, if $R_S(d_1,\dots, d_{\rp+\rm})\neq 0$, then $0 \leq d_1 < \dots < d_{\rp+\rm} \leq n+ \rp + \rm-2$. \end{lemma}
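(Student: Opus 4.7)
The plan is to unpack the nonvanishing condition $R_S(d_1,\dots,d_{\rp+\rm}) \neq 0$ using the support description in Lemma~\ref{M-support}, combine the resulting inequalities with the strict monotonicity $d_1 < \dots < d_{\rp+\rm}$, and match them against the two sum bounds packaged in \eqref{S-cancellation-range}.

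First I would write $S = \{s_1 < \dots < s_{|S|}\}$ and $S^c = \{t_1 < \dots < t_{\rp+\rm-|S|}\}$. Since the factor $\prod_{i\notin S} q^{\alpha_i(n-1)}$ merely shifts exponents at indices outside $S$, the nonvanishing of $R_S(d_1,\dots,d_{\rp+\rm})$ is equivalent to nonvanishing of the coefficient of $\prod_i q^{\alpha_i d_i'}$ in $M_S$, where $d_i' = d_i$ if $i \in S$ and $d_i' = d_i - (n-1)$ if $i \notin S$. Applying Lemma~\ref{M-support} to this shifted tuple, a straightforward greedy/Hall's-marriage argument (assign the smallest $d'$-value to the smallest constraint, the largest to the largest) shows that the existence of a suitable $\sigma$ is equivalent to the sorted bounds
\[ d_{s_i} \geq i-1 \quad (1 \leq i \leq |S|), \qquad d_{t_j} \leq n+|S|+j-2 \quad (1 \leq j \leq \rp+\rm-|S|). \]

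Next I would introduce slack variables $b_i := d_{s_i} - (i-1)$ and $a_j := n+|S|+j-2 - d_{t_j}$, both non-negative. The strict inequalities $d_{s_i} < d_{s_{i+1}}$ and $d_{t_j} < d_{t_{j+1}}$ force $0 \leq b_1 \leq b_2 \leq \dots \leq b_{|S|}$ and $0 \leq a_{\rp+\rm-|S|} \leq \dots \leq a_1$. A short arithmetic check, using ${\rp+\rm \choose 2} - {|S| \choose 2} = |S|(\rp+\rm-|S|) + {\rp+\rm-|S| \choose 2}$, gives the identities
\[ \sum_{i=1}^{|S|} d_{s_i} - {|S| \choose 2} = \sum_{i=1}^{|S|} b_i, \qquad \sum_{j=1}^{\rp+\rm-|S|} (n-1-d_{t_j}) + {\rp+\rm \choose 2} - {|S| \choose 2} = \sum_{j=1}^{\rp+\rm-|S|} a_j, \]
so condition \eqref{S-cancellation-range} is exactly $\sum_i b_i \leq n-1$ and $\sum_j a_j \leq n-1$. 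Monotonicity and non-negativity then give $b_{|S|} \leq \sum_i b_i \leq n-1$ and $a_1 \leq \sum_j a_j \leq n-1$.

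Finally, I would conclude by cases. For the lower bound: if $1 \in S$, then $d_1 = d_{s_1} = b_1 \geq 0$; if $1 \notin S$, then $d_1 = d_{t_1} = n+|S|-1 - a_1 \geq n+|S|-1-(n-1) = |S| \geq 0$. For the upper bound: if $\rp+\rm \in S$, then $d_{\rp+\rm} = d_{s_{|S|}} = |S|-1+b_{|S|} \leq n+|S|-2 \leq n+\rp+\rm-2$; if $\rp+\rm \notin S$, then $d_{\rp+\rm} = d_{t_{\rp+\rm-|S|}} \leq n+\rp+\rm-2$ directly from the support bound. There is no real obstacle here; once the shifted support condition from Lemma~\ref{M-support} is read off and the combinatorial identity for $\sum_j a_j$ is verified, the rest is bookkeeping.
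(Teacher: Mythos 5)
Your proof is correct, but it takes a genuinely different route from the paper's. The paper's proof is short and conceptual: since \eqref{S-cancellation-range} holds, the cancellation equation \eqref{cancellation-equation} gives $T(d_1,\dots,d_{\rp+\rm}) = R_S(d_1,\dots,d_{\rp+\rm}) \neq 0$, and $T$ is by definition the coefficient of $q^{\sum_i d_i\alpha_i}$ in the product of $\prod_{i_1<i_2}(q^{\alpha_{i_1}}-q^{\alpha_{i_2}})$ (degree $\leq \rp+\rm-1$ in each variable) with the moment sum (degree $\leq n-1$ in each variable), so the product is a genuine polynomial of degree $\leq n+\rp+\rm-2$ in each $q^{\alpha_i}$ and its coefficients are supported in the box $[0,n+\rp+\rm-2]^{\rp+\rm}$. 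Your argument instead works directly from the explicit support description of $M_S$ in Lemma~\ref{M-support} (after the $(n-1)$-shift coming from $\prod_{i\notin S}q^{\alpha_i(n-1)}$), then translates the sorted support bounds into the slack variables $b_i, a_j$, verifies the binomial identity matching $\sum_i b_i$ and $\sum_j a_j$ to the two sides of \eqref{S-cancellation-range}, and concludes by the monotonicity forced by $d_1 < \dots < d_{\rp+\rm}$. Both arguments yield the same bound; the paper's is shorter and exploits the already-established identity between $T$ and $R_S$, while yours is more elementary and self-contained, relying only on the combinatorics of $M_S$'s support and never invoking $T$ or the polynomiality of the $L$-functions. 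One small remark: you only need the one-directional (necessity) implication of the greedy argument --- existence of any valid $\sigma$ forces the sorted bounds --- not the full equivalence you assert, though that equivalence is of course also true.
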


\begin{proof} By  \eqref{cancellation-equation}, the assumptions imply that $T(d_1,\dots,d_{\rp+\rm})$ is not zero. By definition $T(d_1,\dots,d_{\rp+\rm})$ is the coefficient of  $q^{\sum_i d_i \alpha_i}$ in the product of $ \prod_{1 \leq i_1< i_2 \leq \rp+\rm} (q^{\alpha_{i_1}} - q^{\alpha_{i_2}})$, which is a polynomial in the $q^{\alpha_i}$ of degree at most $\rp+\rm-1$ in each variable, with $ \sum_{\chi\in S_{n,q}} \epsilon_\chi^{-s}  \prod_{i=1}^{\rp+\rm} L(1/2- \alpha_i, \chi)$, which is a polynomial in the $q^{\alpha_i} $ of degree at most $n-1$ in each variable. Hence this product is a polynomial of degree $\leq n + \rp + \rm-2$ in each variable, and thus the coefficient  $T(d_1,\dots,d_{\rp+\rm})$  can only be nonzero if $0 \leq d_1,\dots, d_{\rp+\rm} \leq n+\rp+\rm-2$.  \end{proof}

\begin{lemma}\label{third-type-individual} For every pair $S,S' \subseteq \{1,\dots, \rp+\rm\}$ with $m| \rp-|S|,$  $m|\rp-|S'|,$ and $S \neq S',$ for each tuple $d_1< \dots < d_{\rp+\rm} \in \mathbb Z$ that satisfies \eqref{S-cancellation-range}, we have $R_S(d_1,\dots, d_{\rp+\rm}) =O \left( n^{O(1)} q^{ n/2} \right) $ \end{lemma}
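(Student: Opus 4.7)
The plan is to unpack $R_S(d_1,\dots,d_{\rp+\rm})$ as $(q^n-q^{n-1})\,\mu^{(\rp-|S|)/m}$ times the coefficient of $\prod_i q^{\alpha_i c_i}$ in $M_S$, where $c_i = d_i - (n-1)\mathbf{1}_{i \notin S}$, so the task reduces to bounding this coefficient by $n^{O(1)}q^{-n/2}$. I will apply Lemma~\ref{shifted-Euler-bounds}, whose polynomial prefactor is $n^{O(1)}$ since $|c_i|=O(n)$ by Lemma~\ref{third-type-range}; thus it suffices to exhibit one of its four exponents that is at most $-n$. I will also use Lemma~\ref{cancellation-agreement}: since \eqref{S-cancellation-range} holds for both $S$ and $S'$, we have $R_S(d)=T(d)=R_{S'}(d)$, which lets me bound $R_S$ through $R_{S'}$ when that is more convenient.

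The central case is $|S|=|S'|$, which is in fact the only case arising under the hypotheses of Theorem~\ref{main}: by Lemma~\ref{m-estimate} we have $m>\max(\rp,\rm)$, and then the constraints $m\mid\rp-|S|$ and $m\mid\rp-|S'|$ force $|S|=|S'|=\rp$. Since $S\neq S'$ while $|S|=|S'|$, not both sets can equal the initial segment $\{1,\dots,|S|\}$; I pick $S^\star\in\{S,S'\}$ that is not an initial segment. Then $\min(S^{\star c})<\max(S^\star)$, and strict monotonicity of the $d_i$ yields $\min_{i\notin S^\star}d_i\leq\max_{i\in S^\star}d_i-1$, so term (a) of Lemma~\ref{shifted-Euler-bounds} applied to $M_{S^\star}$ has exponent $(-\max_{i\in S^\star}d_i+\min_{i\notin S^\star}d_i-n)/2\leq-(n+1)/2$, producing $R_{S^\star}=O(n^{O(1)}q^{(n-1)/2})$ and hence $R_S=R_{S^\star}$ satisfying the same bound.

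For the residual cases $|S|\neq|S'|$ (which can only occur when $m$ is small relative to $\rp+\rm$), the divisibility $m\mid|S|-|S'|$ yields $||S|-|S'||\geq m\geq 2$ whenever $n\geq 4$. Assuming for definiteness $|S|>|S'|$, I will invoke term (d) of Lemma~\ref{shifted-Euler-bounds} applied to $M_S$: its exponent is $-|A|/2$ with $A=\sum_i d_i-(n-1)(\rp+\rm-|S|)-{\rp+\rm\choose 2}$. Summing the two inequalities of \eqref{S-cancellation-range} for $S'$ gives $\sum_i d_i\geq(n-1)(\rp+\rm-|S'|-1)+{\rp+\rm\choose 2}$, so $A\geq(n-1)(|S|-|S'|-1)\geq n-1$, delivering $R_S=O(n^{O(1)}q^{(n+1)/2})$. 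The case $|S|<|S'|$ follows symmetrically via $R_S=R_{S'}$.

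The main obstacle is the case $|S|=|S'|$, because terms (b), (c), (d) of Lemma~\ref{shifted-Euler-bounds} are all insensitive to swapping $S$ with $S'$ and so cannot exploit the hypothesis $S\neq S'$; the clean resolution is the asymmetric term (a) combined with the pigeonhole observation that two distinct same-size subsets cannot both be initial segments, which lets the strict inequalities $d_1<\dots<d_{\rp+\rm}$ supply the gap that makes (a) decay as $q^{-n/2}$.
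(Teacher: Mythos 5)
Your treatment of the main case $|S|=|S'|$ is correct and takes a slightly different route than the paper. The paper picks $i_1 \in S \setminus S'$ and $i_2 \in S' \setminus S$, observes that $\bigl(d_{i_1} + (n-1-d_{i_2})\bigr) + \bigl((n-1-d_{i_1}) + d_{i_2}\bigr) = 2(n-1)$ so that one summand is at least $n-1$, and applies the first exponent of Lemma~\ref{shifted-Euler-bounds} to whichever of $M_S$, $M_{S'}$ that summand belongs to. You instead use the pigeonhole that at most one of two distinct equal-size subsets can be the initial segment $\{1,\dots,|S|\}$, and then strict monotonicity of the $d_i$ gives $\min_{i\notin S^\star} d_i \le \max_{i\in S^\star} d_i - 1$; this is a clean and correct alternative, and both proofs rest on the first exponent of Lemma~\ref{shifted-Euler-bounds}.

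There is, however, a genuine gap in your handling of the residual case $|S|\neq|S'|$. You invoke only $m\ge 2$ (valid for $n\ge 4$), obtain $A \ge (n-1)\cdot(|S|-|S'|-1) \ge n-1$, and conclude $R_S = O\bigl(n^{O(1)} q^{(n+1)/2}\bigr)$, which is larger than the claimed $O\bigl(n^{O(1)} q^{n/2}\bigr)$ by a factor $\sqrt q$; as written your proof does not establish the lemma's bound. The paper closes this by citing Lemma~\ref{m-estimate} to get $m\ge 3$ under the hypotheses of Proposition~\ref{main1} (note this lemma feeds Proposition~\ref{main1}, not Theorem~\ref{main}, so the extra hypothesis $n>2\max(\rp,\rm)+1$ that would make this case vacuous is not available). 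With $m\ge 3$ one has $|S|-|S'|\ge 3$ and so $A_{S'}-A_S \ge 3(n-1)>2n-2$, forcing one of $|A_S|,|A_{S'}|$ to be at least $n$; applying the fourth part of Lemma~\ref{shifted-Euler-bounds} to that one (and passing through $R_S = T = R_{S'}$ as needed) then yields the stated $O\bigl(n^{O(1)} q^{n/2}\bigr)$. Your own scheme would also close the gap if you strengthened $m\ge 2$ to $m\ge 3$, since then $A \ge 2(n-1)\ge n$.
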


\begin{proof} We may assume without loss of generality that $R_S(d_1,\dots, d_{\rp+\rm}) \neq 0$. We will apply Lemma~\ref{shifted-Euler-bounds} to bound this term, noting by Lemma. \ref{third-type-range} that the factor $( O(1) + \sum_{i\in S} d_i - \sum_{i\notin S } d_i)^{O(1)}$ appearing in Lemma~\ref{shifted-Euler-bounds} is $n^{O(1)}$. 

We can freely use the consequence of \eqref{cancellation-equation} that \[ R_S ( d_1,\dots,d_{\rp+\rm})= T( d_1,\dots,d_{\rp+\rm})= R_{S'} ( d_1,\dots,d_{\rp+\rm}) \] to pass between $S$ and $S'$.

We will split into two cases depending on if $|S'|=|S|$ or not.

If $|S'|=|S|$, we choose $i_1$ in $S$ but not $S'$ and $i_2$ in $S'$ but not $S$. We have \[ \left( d_{i_1}  + (n-1- d_{i_2} )  \right) + \left( (n-1-d_{i_1}  ) + d_{i_2} \right)  \geq 2(n-1),\] so one is at least $n-1$. Applying the first part of Lemma~\ref{shifted-Euler-bounds} for $S$ if the first one is smaller and $S'$ if the second one is smaller, we see that \[R_S( d_1,\dots,d_{\rp+\rm})   = O \left( n^{O(1)} q^{ n - n/2} \right) .\] 

If $|S'| \neq |S|$, assume without loss of generality that $|S| < |S'|$. Then by Lemma~\ref{m-estimate}, $|S'|- |S| \geq m \geq 3$. Hence  \[ \left( \sum_{i \in S} d_i  - { \rp+\rm \choose 2}  - (\rp+\rm- |S|) (n-1)  \right) - \left(  \sum_{i \in S'} d_i  - { \rp+\rm \choose 2}  - (\rp+\rm- |S'|) (n-1) \right) \] \[\geq 3 (n-1) > 2n-2\] so one of these two terms must have absolute value at least $n$. Without loss of generality, it is the term associated to $S$. Then by the fourth part of Lemma~\ref{shifted-Euler-bounds}, $R_S( d_1,\dots,d_{\rp+\rm}) = O \left( n^{O(1)} q^{n-n/2} \right)$. \end{proof}

\begin{lemma}\label{third-type-bound} The sum \eqref{third-type-of-terms} is $O \left( n^{O(1)} q^{ n /2} \right) $. \end{lemma}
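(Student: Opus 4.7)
The plan is to combine the pointwise estimate of Lemma \ref{third-type-individual} with the bounded summation range given by Lemma \ref{third-type-range}, observing that all remaining factors in \eqref{third-type-of-terms} contribute only polynomially in $n$.

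First, I would fix $S \subseteq \{1,\dots,\rp+\rm\}$ with $m \mid \rp - |S|$. By Lemma \ref{third-type-range}, every tuple $d_1 < \dots < d_{\rp+\rm}$ contributing a nonzero term to the inner sum of \eqref{third-type-of-terms} satisfies $0 \leq d_1 < \dots < d_{\rp+\rm} \leq n + \rp + \rm - 2$, so the number of relevant tuples is at most $\binom{n+\rp+\rm-1}{\rp+\rm} = O(n^{\rp+\rm})$. On this range the Vandermonde-like numerator $\prod_{1 \le i_1 < i_2 \le \rp+\rm} |d_{i_1} - d_{i_2}|$ is trivially bounded by $(n+\rp+\rm-2)^{\binom{\rp+\rm}{2}} = O(n^{\binom{\rp+\rm}{2}})$, while the denominator $\prod_{i=1}^{\rp+\rm}(i-1)!$ is an absolute constant.

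Next, I would invoke Lemma \ref{third-type-individual}, which provides the uniform pointwise estimate $|R_S(d_1,\dots,d_{\rp+\rm})| = O(n^{O(1)} q^{n/2})$ on every tuple in the double-cancellation range, i.e., those for which \eqref{S-cancellation-range} holds simultaneously for both $S$ and some distinct $S' \neq S$. Since the outer sum over $S$ runs over only $O(1)$ subsets, multiplying the polynomial-in-$n$ factors above by the pointwise bound and absorbing everything into $n^{O(1)}$ yields
\[
\eqref{third-type-of-terms} \;=\; O\!\left(n^{O(1)}\, q^{n/2}\right),
\]
as required.

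I do not anticipate any substantive obstacle here: the analytic heart of the argument is already isolated in Lemma \ref{third-type-individual}, where the presence of two distinct sets $S, S'$ in the cancellation range was used, via the first or fourth clause of Lemma \ref{shifted-Euler-bounds}, to extract the crucial extra factor of $q^{n/2}$ of decay from the $M_S$ coefficient. The present lemma is then just a routine uniform summation of those pointwise bounds over the polynomially-many admissible tuples.
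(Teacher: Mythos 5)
Your proof is correct and follows essentially the same argument as the paper's: by Lemma \ref{third-type-range} the nonzero terms are confined to $0 \le d_1 < \dots < d_{\rp+\rm} \le n + \rp + \rm - 2$, so both the number of tuples and the Vandermonde-like factor $\prod_{1 \le i_1 < i_2 \le \rp+\rm}|d_{i_1}-d_{i_2}|$ are $n^{O(1)}$, and Lemma \ref{third-type-individual} supplies the pointwise bound $|R_S(d_1,\dots,d_{\rp+\rm})| = O(n^{O(1)}q^{n/2})$, yielding the result after multiplying. The only cosmetic difference is that you make the count of tuples explicit with a binomial coefficient, which the paper absorbs into $n^{O(1)}$.
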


\begin{proof} By Lemma \ref{third-type-range}, the number of nonzero terms in \eqref{third-type-of-terms} is $n^{O(1)}$, and the factor $\prod_{1 \leq i_1< i_2 \leq \rp+\rm} |d_{i_1}-d_{i_2} | $ appearing in each term is $n^{O(1)}$. By Lemma \ref{third-type-individual}, the factor $R_s(d_1,\dots,d_{\rp+\rm})$ appearing in each term is $O \left( n^{O(1)} q^{ n /2} \right)$. So the sum over all terms is $O \left( n^{O(1)} q^{ n /2} \right)$. \end{proof}

Next we handle \eqref{second-type-of-terms}.

\begin{lemma}\label{second-type-bound} The sum \eqref{second-type-of-terms} is $O ( n^{O(1)} q^{n/2})$. \end{lemma}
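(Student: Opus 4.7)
The strategy is to bound each coefficient $R_S(d)$ via Lemma~\ref{shifted-Euler-bounds}, exploit the fact that failure of \eqref{S-cancellation-range} forces a large ``deficit'' in one of the inequalities appearing there, and then sum a geometric series with polynomial prefactors.

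First, I would set $\hat d_i = d_i$ for $i \in S$ and $\hat d_i = d_i - (n-1)$ for $i \notin S$. Because $|\mu| = 1$ and $\prod_{i \notin S} q^{\alpha_i (n-1)}$ only shifts exponents,
\[ |R_S(d_1, \dots, d_{\rp+\rm})| = (q^n - q^{n-1}) \cdot \left| [q^{\sum_i \alpha_i \hat d_i}]\, M_S(\alpha_1, \dots, \alpha_{\rp+\rm}) \right|. \]
Define $A := \sum_{i \in S} \hat d_i - \binom{|S|}{2}$ and $B := \binom{\rp+\rm}{2} - \binom{|S|}{2} - \sum_{i \notin S} \hat d_i$. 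By Lemma~\ref{M-support} the coefficient vanishes unless $A, B \geq 0$ and the $\hat d_i$ are distinct within each of the index sets $S$ and its complement; in particular the first inequality of \eqref{S-cancellation-range} holds automatically on the support, so failure of \eqref{S-cancellation-range} is equivalent to failure of the second inequality, which rearranges to $B \geq n$.

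Next, I would apply the second and third bounds from Lemma~\ref{shifted-Euler-bounds}, whose minimum yields $q^{-\max(A, B)/2}$, so that
\[ |R_S(d)| = O\!\left( q^n \cdot (A + B + n)^{O(1)} \cdot q^{-\max(A, B)/2} \right). \]
The Vandermonde-like weight satisfies $\prod_{1 \leq i_1 < i_2 \leq \rp+\rm} |d_{i_1} - d_{i_2}| = O\!\left( (A + B + n)^{O(1)} \right)$, because each $\hat d_i$ lies in an interval of length $O(A + B)$ and the shift by $n-1$ enlarges the range of the $d_i$'s by at most $n$. Moreover, the number of ordered $d$-tuples corresponding to prescribed $(A, B)$ is $O\!\left( (A + 1)^{|S|-1}(B + 1)^{\rp+\rm-|S|-1} \right)$, since the $\hat d_i$ for $i \in S$ form a tuple of distinct nonnegative integers of prescribed sum, and similarly for $i \notin S$ with $A$ replaced by $B$.

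Grouping by $t := \max(A, B) \geq n$ (there are only $O(t)$ pairs $(A, B)$ with $\max(A, B) = t$, $A \geq 0$, and $B \geq n$), the total contribution of a given $S$ to \eqref{second-type-of-terms} is bounded by
\[ O\!\left( q^n \sum_{t \geq n} t^{O(1)} q^{-t/2} \right) = O\!\left( q^n \cdot n^{O(1)} q^{-n/2} \right) = O\!\left( n^{O(1)} q^{n/2} \right), \]
since a geometric series with polynomial prefactor is dominated by its first term $t = n$. The sum over the finitely many admissible $S$ introduces only a bounded factor, completing the bound. The main obstacle, if any, is bookkeeping the various polynomial factors (from Lemma~\ref{shifted-Euler-bounds}, from the Vandermonde weight, and from counting $\hat d$-tuples with fixed $(A, B)$) and verifying they are cleanly absorbed by the geometric decay $q^{-\max(A, B)/2}$; this works precisely because the failure of \eqref{S-cancellation-range} forces $\max(A, B) \geq n$, which gives a free factor of $q^{-n/2}$ to soak up all polynomial losses.
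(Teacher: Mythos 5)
Your strategy matches the paper's: use Lemma~\ref{M-support} to show $A,B \geq 0$ on the support, use Lemma~\ref{shifted-Euler-bounds} to get the factor $q^{-\max(A,B)/2}$, note violation of \eqref{S-cancellation-range} forces $\max(A,B)\geq n$, and sum the geometric tail. The bookkeeping of the Vandermonde weight and tuple count is more explicit than the paper's ``$k^{O(1)}$'' shorthand, but the content is the same.

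However, there is one slip. You read \eqref{S-cancellation-range} as two separate inequalities --- ``$0 \leq A$'' and ``$B \leq n-1$'' --- and conclude that, since $A\geq 0$ holds automatically by Lemma~\ref{M-support}, failure of \eqref{S-cancellation-range} \emph{is equivalent to} $B \geq n$. But the intended reading (the chained form $0 \leq A, B \leq n-1$, which is also what Lemma~\ref{cancellation-agreement} actually requires) includes the upper bound $A \leq n-1$. So violation together with nonvanishing yields ``$A \geq n$ \emph{or} $B \geq n$'', not just ``$B \geq n$''. As written, your enumeration (``pairs $(A,B)$ with $\max(A,B)=t$, $A\geq 0$, and $B \geq n$'') therefore omits the tuples with $A\geq n$ but $B < n$, so the sum you bound is not the full sum \eqref{second-type-of-terms}. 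This is easily patched --- those omitted tuples satisfy $\max(A,B)=A\geq n$, so the same bound $|R_S|=O(q^n t^{O(1)} q^{-t/2})$ applies via the second part of Lemma~\ref{shifted-Euler-bounds}, and the pair count $O(t)$ for $\max(A,B)=t$, $A,B\geq 0$ is unchanged --- but the statement ``equivalent to $B\geq n$'' should be replaced by ``implies $\max(A,B)\geq n$'' to make the enumeration cover all contributing tuples.
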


\begin{proof} Fix $d_1<\dots< d_{\rp+\rm}$ that contribute nontrivially to \eqref{second-type-of-terms}, i.e. \eqref{S-cancellation-range} is violated and $R_S(d_1,\dots, d_{\rp+\rm})$ is nonzero.  By Lemma~\ref{M-support}, the nonvanishing implies that \[ 0 \leq \sum_{i \in S } d_i  - { |S| \choose 2} ,  0 \leq  \sum_{i \notin S}  (n-1-d_i) + { \rp+\rm \choose 2} - {|S| \choose 2},\] so because \eqref{S-cancellation-range} is violated, we must have either  \[ \sum_{i \in S } d_i  - { |S| \choose 2}  \geq n\]  or \[ \sum_{i \notin S}  (n-1-d_i) + { \rp+\rm \choose 2} - {|S| \choose 2}  \geq n .\]  Let $k = \max( \sum_{i \in S } d_i  - { |S| \choose 2} ,  \sum_{i \notin S}  (n-1-d_i) + { \rp+\rm \choose 2} - {|S| \choose 2}  )$. Then there are $k^{O(1)}$ tuples  $d_1,\dots, d_{\rp+\rm}$ leading to a given $k$, each of which has coefficients $O( k^{O(1)} q^{n - k/2} )$ by the second and third parts of Lemma~\ref{shifted-Euler-bounds}, and $ \prod_{1 \leq i_1< i_2 \leq \rp+\rm} |d_{i_1}-d_{i_2} |  = k^{O(1) }$, so in total \eqref{second-type-of-terms} is $\sum_{k \geq n} k^{O(1)} q^{n- k/2} = O ( n^{O(1)} q^{n/2})$. \end{proof}

\begin{lemma}\label{first-type-bound} The sum \eqref{first-type-of-terms} is  $O \left(  n^{\rp+\rm}  C_{\rp+\rm}^{n-1} q^{\frac{n+w}{2}}\right)$. \end{lemma}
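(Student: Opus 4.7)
The plan is as follows. For each strictly-increasing first-type tuple $d_1 < \dots < d_{\rp+\rm}$, Lemma~\ref{L-representation-identity} reduces $T(d_\bullet)$ to $\pm F(V_{e_1,\dots,e_\rp\mid e_{\rp+1},\dots,e_{\rp+\rm}})$, where $e_i = d_i - (i-1)$ is nondecreasing in $[0,n-1]^{\rp+\rm}$ (for strictly-increasing $d_\bullet$, only the trivial permutation contributes in the identity). In the $e_i$ variables, the condition \eqref{S-cancellation-range} for the prefix $S = \{1,\dots,k\}$ simplifies to $\sum_{i\le k} e_i \leq n-1$ together with $\sum_{i>k}(n-1-e_i) \leq n-1$. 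Because $e_\bullet$ is nondecreasing, the prefix subsets are extremal among all $S$ of a given size for both sides of \eqref{S-cancellation-range}, so the first-type hypothesis---failure of \eqref{S-cancellation-range} for every $S$ with $m\mid \rp-|S|$---is equivalent, via Lemmas~\ref{representation-theory-characterization} and~\ref{easy-rep-characterization}, to the assertion that no geometric irreducible constituent of $V(L_{\univ}(1/2))$ appears as a summand of any $L_{\univ}^{\otimes a}\otimes L_{\univ}^{\vee\otimes b}$ with $0\le a,b\le n-1$.

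Granted that translation, decompose $V(L_{\univ}(1/2))$ geometrically into irreducibles; Hypothesis $\operatorname{H}(n,\rp,\rm,w)$ then applies to each summand, and combined with Artin's affine-vanishing theorem on $\Prim_n$, restricts the nonvanishing of $H^j_c(\Prim_{n,\overline{\mathbb{F}}_q},V(L_{\univ}(1/2)))$ to $j \in [n,n+w]$. Deligne's purity for the pure weight-zero sheaf $V(L_{\univ}(1/2))$ then gives
\[|T(d_\bullet)| = |F(V)| \leq q^{(n+w)/2}\sum_j \dim H^j_c\bigl(\Prim_{n,\overline{\mathbb{F}}_q},V(L_{\univ}(1/2))\bigr).\]
To bound the total Betti number, use the highest-weight embedding $V_{e_1,\dots,e_\rp\mid e_{\rp+1},\dots,e_{\rp+\rm}} \hookrightarrow \bigotimes_{i=1}^{\rp}\wedge^{e_i}\operatorname{std} \otimes \bigotimes_{i=\rp+1}^{\rp+\rm}\wedge^{n-1-e_i}\operatorname{std}^\vee$ (from the proof of Lemma~\ref{representation-theory-characterization}) together with Lemma~\ref{Betti-number-bound}.

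Finally, substitute this estimate into \eqref{first-type-of-terms} and sum over all first-type tuples. There are $O(n^{\rp+\rm})$ of them by Lemma~\ref{third-type-range}, and the Weyl weight $\prod_{i_1<i_2}|d_{i_1}-d_{i_2}|/\prod_i(i-1)!$ is polynomial in $n$. The remaining work is a geometric sum $\sum_{e_\bullet}(2+\max(\rp,\rm))^{\sum_{i\le\rp}e_i+\sum_{i>\rp}(n-1-e_i)}$ weighted by this polynomial factor. The principal obstacle is to obtain the sharp exponent $(\max(\rp,\rm)+1)(n-1)$ rather than the cruder $(\rp+\rm+1)n$ that naive application of Lemma~\ref{Betti-number-bound} to the fixed embedding above would produce: the resolution is either to adaptively choose, for each index $i$, whether to represent the corresponding factor as $\wedge^{e_i}\operatorname{std}$ or $\wedge^{n-1-e_i}\operatorname{std}^\vee$ (picking whichever has smaller wedge degree, subject to the constraint that the total number of $\operatorname{std}$- and $\operatorname{std}^\vee$-factors matches the count needed to absorb the $\det^{-\rm}$ twist), or else to exploit the Jacobi-Trudi alternating-sum structure from Lemma~\ref{representation-generating-identity} to cut down the effective wedge degrees before applying the Betti-number estimate. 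Either route, once executed, yields the claimed bound $O(n^{\rp+\rm} C_{\rp,\rm}^{n-1} q^{(n+w)/2})$.
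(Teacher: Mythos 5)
Your framework — pass to $F(V_{e_\bullet})$ via Lemma~\ref{L-representation-identity}, translate the first-type condition via Lemma~\ref{easy-rep-characterization}, invoke Hypothesis~$\operatorname{H}(n,\rp,\rm,w)$ plus Deligne purity to get $|F(V)| \le q^{(n+w)/2}\sum_j \dim H^j_c$ — is exactly the paper's. The gap is that you stop before the Betti-number endgame and flag a ``principal obstacle'' that you leave unresolved; the two remedies you sketch (adaptive choice of $\operatorname{std}$ vs.\ $\operatorname{std}^\vee$, Jacobi--Trudi cancellation) are not carried out, so as written the proof is incomplete.

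The obstacle is in fact illusory, and you already have the tool to close it: $e_\bullet$ is nondecreasing, which you noted but did not use here. With $e_1 \le \dots \le e_{\rp+\rm}$, the fixed embedding into $\bigotimes_{i\le\rp}\wedge^{e_i}\operatorname{std}\otimes\bigotimes_{i>\rp}\wedge^{n-1-e_i}\operatorname{std}^\vee$ has total wedge degree
\[
\sum_{i\le\rp}e_i + \sum_{i>\rp}(n-1-e_i) \le \rp\, e_\rp + \rm\,(n-1-e_{\rp+1}) \le \rp\, e_\rp + \rm\,(n-1-e_\rp) \le \max(\rp,\rm)(n-1),
\]
so Lemma~\ref{Betti-number-bound} already delivers $O(C_{\rp,\rm}^{n-1})$ per tuple. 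Your ``adaptive'' proposal, once constrained to absorb $\det^{-\rm}$, collapses to this same fixed embedding, so it adds nothing. Beyond this, your strategy of bounding each $|F(V_{e_\bullet})|$ separately and then multiplying by the Weyl weight $\prod_{i_1<i_2}|d_{i_2}-d_{i_1}|/\prod_i(i-1)!$ and summing still loses a polynomial factor $n^{\binom{\rp+\rm}{2}}$ relative to the stated $n^{\rp+\rm}$. The paper sidesteps this via Lemma~\ref{representation-multiplicity-identity}: the Weyl weight is precisely the multiplicity of $V_{d_\bullet}$ in $\bigoplus_{0\le e_1,\dots,e_{\rp+\rm}\le n-1}\det^{-\rm}\otimes\bigotimes_i\wedge^{e_i}$, so the entire Weyl-weighted sum of Betti numbers is absorbed into $\sum_{e_\bullet}\sum_j\dim H^j_c(\Prim_n,\det^{-\rm}\otimes\bigotimes_i\wedge^{e_i}(L_{\univ}))$, which is $n^{\rp+\rm}$ terms each $O(C_{\rp,\rm}^{n-1})$ — yielding the bound exactly.
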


\begin{proof} By Lemma~\ref{L-representation-identity}, \[ T ( d_1,d_2+1,\dots, d_{\rp+\rm}+\rp+\rm-1 ) = \pm F\left(  V_{d_1,\dots,d_\rp | d_{\rp+1},\dots, d_{\rp+\rm } }\right ) \] With this renormalized set of coefficients, \eqref{first-type-of-terms} is a sum over tuples $0 \leq d_1 \leq \dots \leq d_{\rp+\rm} \leq n-1$ such that for no $S$ of cardinality congruent to $r$ mod $m$ do we have $0 \leq \sum_{i \in S} d_i, \sum_{i \not \in S} (n-1-d_i) \leq n-1$.  Hence by Lemma~\ref{easy-rep-characterization}, for these tuples $V_{d_1,\dots,d_\rp | d_{\rp+1},\dots, d_{\rp+\rm } }(L_{\univ} )$ does not appear as a summand of $L_{\univ}^{\otimes a} \otimes L_{\univ}^{\vee \otimes b}$ for $0\leq a,b\leq n-1$.

By definition, we have
\[ F\left(  V_{d_1,\dots,d_\rp | d_{\rp+1},\dots, d_{\rp+\rm } }\right ) = \sum_{j \in \mathbb Z} (-1)^j \ \tr(\Frob_q, H^j_c( \Prim_{n,\overline{\mathbb F}_q}, V_{d_1,\dots,d_\rp| d_{\rp+1},\dots,r_{\rp+\rm}}(L_{\univ}(1/2)))) .\]

Because $V_{d_1,\dots,d_\rp | d_{\rp+1},\dots, d_{\rp+\rm } }(L_{\univ} )$ is irreducible, appears as a summand of $\det^{-s} (L_{\univ}) \otimes \bigotimes_{i=1}^{\rp+\rm} \wedge^{d_i}( L_{\univ})$, and does not appear as a summand of $L_{\univ}^{\otimes a} \otimes L_{\univ}^{\vee \otimes b}$ for $0\leq a,b\leq n-1$, we may apply Hypothesis $\operatorname{H}(n,\rp,\rm,w)$ to get $H^j_c( \Prim_{n,\overline{\mathbb F}_q}, V_{d_1,\dots,d_\rp| d_{\rp+1},\dots,r_{\rp+\rm}}(L_{\univ}(1/2)) )=0 $ for $j> n +w$.

Because the $L$-functions $L(s,\chi)$ satisfy the Riemann hypothesis, the eigenvalues of Frobenius on $L_{\univ}$ have size $\sqrt{q}$ (see Definition \ref{L-univ}), and so $L_{\univ}$ is pure of weight $1$. This implies that $L_{\univ}(1/2)$ is pure of weight $0$ and  thus $V_{d_1,\dots,d_\rp| d_{\rp+1},\dots,r_{\rp+\rm}}(L_{\univ}(1/2)) $ is also pure of weight $0$. By Deligne's Riemann hypothesis \cite[Theorem 1]{weil-ii}, all eigenvalues of $\Frob_q$ on $H^j_c( \Prim_{n,\overline{\mathbb F}_q}, V_{d_1,\dots,d_\rp| d_{\rp+1},\dots,r_{\rp+\rm}}(L_{\univ}(1/2)) )$ are $\leq q^{j/2} \leq q^{ \frac{n+w}{2}}$. Thus because the trace of an endomorphism of a vector space is at most the dimension times the size of the greatest eigenvalue,
\[ \left| F\left(  V_{d_1,\dots,d_\rp | d_{\rp+1},\dots, d_{\rp+\rm } }\right )  \right| \leq q^{\frac{n+w}{2}}  \sum_{j \in \mathbb Z} \dim H^j_c( \Prim_{n,\overline{\mathbb F}_q}, V_{d_1,\dots,d_\rp| d_{\rp+1},\dots,r_{\rp+\rm}}(L_{\univ})).\]

Hence \eqref{first-type-of-terms} is at most 
%
%
%
%
\[   \sum_{ 0 \leq d_1 \leq \dots \leq d_{\rp+\rm} \leq n-1}\frac{ \prod_{1 \leq i_1 < i_2 \leq \rp+\rm} | d_{i_2}+i_2 - d_{i_1} -i_1|}{ \prod_{i=1}^{\rp+\rm} (i-1)!}  q^{\frac{n+w}{2}}  \sum_{j \in \mathbb Z} \dim H^j_c( \Prim_{n,\overline{\mathbb F}_q}, V_{d_1,\dots,d_\rp| d_{\rp+1},\dots,r_{\rp+\rm}}(L_{\univ})) .\]

%

By the Weyl character formula and Weyl dimension formula, \[\frac{ \prod_{1 \leq i_1< i_2 \leq \rp+\rm} |d_{i_2}+i_2-d_{i_1}-i_1 |}{ \prod_{i=1}^{\rp+\rm} (i-1)!} = \lim_{\alpha_1,\dots,\alpha_r \to 1}  \frac{ \sum_{\sigma \in S_{\rp+\rm}} \sign(\sigma) q^{( d_i +i-1)\alpha_\sigma(i)}}{ \prod_{1\leq i_1< i_2 \leq \rp+\rm} (q^{\alpha_{i_2}}- q^{\alpha_{i_1}})} \] which, by Lemma~\ref{representation-multiplicity-identity}, is the multiplicity that $V_{d_1,\dots,d_\rp  | d_{\rp+1},\dots, d_{\rp+\rm} }$ appears in \[\bigoplus_{ 0\leq e_1, \dots, e_{\rp+\rm} \leq n-1} \det^{-\rm} \otimes \bigotimes_{i=1}^{\rp+\rm} \wedge^{e_i}.\] Hence \[  \sum_{0 \leq d_1 \leq d_2 \leq \dots \leq d_{\rp+\rm}\leq n-1}   \frac{ \prod_{1 \leq i_1< i_2 \leq \rp+\rm} |d_{i_2}+i_2-d_{i_1} -i_1|}{ \prod_{i=1}^{\rp+\rm} (i-1)!} \sum_j  \dim H^j_c ( \Prim_{n, \overline{\mathbb F}_q},V _{d_1,\dots,d_\rp | d_{\rp+1},\dots, d_{\rp+\rm}} ( L_{\univ})) \] \[\leq \sum_{0 \leq e_1,\dots, e_{\rp+\rm} \leq n-1 } \dim H^j_c ( \Prim_{n, \overline{\mathbb F}_q} , \det(L_{\univ})^{-\rm} \otimes \bigotimes_{i=1}^{\rp+\rm} \wedge^{e_i} (L_{\univ})).\]

The total number of terms here is $n^{\rp+\rm}$. For each term, we apply Lemma~\ref{Betti-number-bound} and use the fact that \[ \det(L_{\univ})^{-\rm} \otimes \bigotimes_{i=1}^{\rp+\rm} \wedge^{e_i} (L_{\univ}) = \bigotimes_{i=1}^\rp \wedge^{e_i}(L_{\univ} )  \otimes \bigotimes_{i=\rm+1}^{\rp+\rm} \wedge^{n-1-e_1} (L_{\univ}^\vee).\] We can assume in each term that $e_1 \leq e_2 \dots \leq e_{\rp+\rm}$, so that $\sum_{i=1}^{\rp} e_i  + \sum_{i=r+1}^{\rp+\rm} (n-1-e_i)  \leq \rp  e_\rp+\rm (n-1-e_{r+1}) \leq \rp e_\rp+\rm(n-1-e_r) \leq (n-1) \max(\rp,\rm).$ This gives exactly the bound $O( C_{\rp+\rm}^{n-1} q^{\frac{n+w}{2}})$ for each term and thus $O( n^{\rp+\rm}C_{\rp+\rm}^{n-1} q^{\frac{n+w}{2}})$ in total.

\end{proof}

\begin{remark} It is unsurprising that, in this proof, dimensions of irreducible representations of $GL_{\rp+\rm}$ appear as multiplicities of irreducible representations of $GL_{n-1}$ in \[\sum_{ 0\leq e_1, \dots, e_{\rp+\rm} \leq n-1} \det^{-\rm} \otimes \bigotimes_{i=1}^{\rp+\rm} \wedge^{e_i},\] because $\otimes \bigotimes_{i=1}^{\rp+\rm} \wedge^{e_i}$ admits a natural action of $GL_{n-1} \times GL_{\rp+\rm}$ as it is the exterior algebra of the tensor product of the standard representations of $GL_{n-1}$ and $GL_{\rp+\rm}$, and this action is preserved after tensoring with the $-\tilde{r}$ power of the determinant of $GL_{n-1}$. This is part of the approach of Bump and Gamburd \cite{BumpGamburd} to moments of the characteristic polynomial of random matrices.\end{remark}

\begin{cor} Assume $n\geq 3$, if $n=3$ that the characteristic of $\mathbb F_q$ is not $2$ or $5$, and if $n=4$ or $5$ that the characteristic of  $\mathbb F_q$ is not $2$.

Assume Hypothesis $\operatorname{H}(n,\rp,\rm,w)$. Let $\alpha_{1},\dots,\alpha_{\rp+\rm}$ be imaginary. Let $C_{\rp,\rm}=\left(\max(\rp,\rm)+2\right)^{\max(\rp,\rm)+1}$  

\[ \frac{1}{ q^{n}- q^{n-1} } \sum_{\chi\in S_{n,q}} \epsilon_\chi^{-\rm} \prod_{i=1}^{\rp+\rm} L(1/2- \alpha_i, \chi) \] \[ =    \sum_{\substack{  S \subseteq \{1,\dots, \rp+\rm\}  \\ m | \rp-|S| }}\mu^{ \frac{\rp- |S|}{m}}  \prod_{i \notin S} q^{ \alpha_i (n-1)} \sum_{ \substack{ f_1,\dots, f_{\rp+\rm} \in \mathbb F_q[T] \\ \textrm{monic} \\ \prod_{i \in S} f_i / \prod_{i\notin S} f_i \in T^{\mathbb Z} }}  \prod_{i\in S} |f_i|^{ -\frac{1}{2} +\alpha_i} \prod_{i \notin S} |f_i|^{ - \frac{1}{2} - \alpha_i} \] \[ + O \left(  q^{\frac{w-n}{2}}  C_{\rp,\rm} ^n \right) . \]

If $n> 2 \max(\rp,\rm)+1$ then we need only the terms where $\rp=|S|$. 

\end{cor}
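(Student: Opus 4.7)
The plan is to deduce this Corollary directly from Proposition~\ref{main1} by dividing through by the Vandermonde $V(\alpha):=\prod_{1\leq i_1<i_2\leq \rp+\rm}(q^{\alpha_{i_1}}-q^{\alpha_{i_2}})$ and by $q^n-q^{n-1}$. Proposition~\ref{main1} asserts
\[
V(\alpha)\sum_{\chi\in S_{n,q}}\epsilon_\chi^{-\rm}\prod_{i=1}^{\rp+\rm}L(1/2-\alpha_i,\chi)=\sum_{\substack{S\subseteq\{1,\dots,\rp+\rm\}\\ m\mid \rp-|S|}}(q^n-q^{n-1})\mu^{(\rp-|S|)/m}\prod_{i\notin S}q^{\alpha_i(n-1)}M_S(\alpha)+E(\alpha),
\]
with $|E(\alpha)|\leq |V(\alpha)|\,n^{\rp+\rm}C_{\rp,\rm}^{n-1}q^{(n+w)/2}$. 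Since $M_S(\alpha)$ is by definition $V(\alpha)$ times the constrained diagonal sum over $(f_1,\dots,f_{\rp+\rm})$ appearing in the Corollary, dividing both sides of this identity by $V(\alpha)(q^n-q^{n-1})$ produces exactly the claimed identity; the resulting error is $O(n^{\rp+\rm}C_{\rp,\rm}^{n-1}q^{(w-n)/2})$, which is absorbed in the stated bound $O(q^{(w-n)/2}C_{\rp,\rm}^n)$.

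The one subtlety is that $V(\alpha)$ vanishes when two of the $q^{\alpha_i}$ coincide, which can occur on the imaginary locus. I would first establish the identity on the open dense set where $V(\alpha)\neq 0$, where the division is unambiguous, and then extend by continuity. The LHS is a Laurent polynomial in the $q^{\alpha_i}$, and the main term on the right is holomorphic in a neighborhood of the imaginary locus by Remark~\ref{vandermonde-remark} together with part (2) of Lemma~\ref{leading-term}. The pointwise absolute-value error bound is manifestly continuous, so the identity extends cleanly.

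For the final sentence, I would invoke Lemma~\ref{m-estimate}: when $n>2\max(\rp,\rm)+1$ we have $(n-1)/2>\max(\rp,\rm)$, so the largest power of $p$ at least $(n-1)/2$ strictly exceeds $\max(\rp,\rm)$, and hence $m>\max(\rp,\rm)$. Because $|S|\in\{0,\dots,\rp+\rm\}$ forces $|\rp-|S||\leq\max(\rp,\rm)<m$, the divisibility condition $m\mid\rp-|S|$ admits only $|S|=\rp$, so the right side collapses to its single $|S|=\rp$ summand.

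There is no serious obstacle here: the substantive work — the cancellation analysis of Lemma~\ref{cancellation-agreement}, the contour estimates of Lemma~\ref{shifted-Euler-bounds}, and the cohomological Hypothesis~\ref{hypo} input through Lemma~\ref{Betti-number-bound} — has all been done in proving Proposition~\ref{main1}. The only step requiring genuine care is the continuity argument used to legitimize division by $V(\alpha)$ where it vanishes, but this follows from the already-established holomorphicity of the main term.
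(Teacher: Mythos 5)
Your proof takes the same route as the paper: divide Proposition~\ref{main1} by $(q^n-q^{n-1})\prod_{i_1<i_2}(q^{\alpha_{i_1}}-q^{\alpha_{i_2}})$ for the main identity, and apply Lemma~\ref{m-estimate} to get $m>\max(\rp,\rm)$ for the final reduction to $|S|=\rp$. Both steps match the paper exactly, and your extra care about extending across the zero locus of the Vandermonde by holomorphicity of the main term is a reasonable elaboration of a point the paper leaves implicit (justified by Remark~\ref{vandermonde-remark} and Lemma~\ref{leading-term}).

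One caveat you should not pass over so lightly: dividing the error term of Proposition~\ref{main1} gives $O\bigl(n^{\rp+\rm}C_{\rp,\rm}^{n-1}q^{(w-n)/2}\bigr)$, and this is \emph{not} absorbed by the bound $O\bigl(q^{(w-n)/2}C_{\rp,\rm}^n\bigr)$ as you claim, since $n^{\rp+\rm}C_{\rp,\rm}^{n-1}/C_{\rp,\rm}^n = n^{\rp+\rm}/C_{\rp,\rm}$ is unbounded in $n$. The Corollary's stated error is inconsistent with Theorem~\ref{main} (which keeps the $n^{\rp+\rm}$ factor) and is evidently a typo; the bound you actually obtain from Proposition~\ref{main1} is $O\bigl(q^{(w-n)/2}C_{\rp,\rm}^{n}n^{\rp+\rm}\bigr)$, which is the form that propagates correctly to Theorem~\ref{main}. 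Your deduction is otherwise correct; just state the error term you actually get rather than asserting a false absorption.
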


\begin{proof} The first claim follows from Proposition~\ref{main1} after dividing both sides by $(q^n-q^{n-1}) \prod_{1 \leq i_1< i_2 \leq \rp+\rm} (q^{\alpha_{i_1}} - q^{\alpha_{i_2}}) $.

The second claim follows from Lemma~\ref{m-estimate}, because then $m> \max(\rp,\rm)$.

\end{proof} 

In particular, the second claim is Theorem \ref{main}.

\section{Verification of the hypothesis in special cases}

\begin{lemma}\label{key-verification}

Let $\mathcal F$ be an irreducible lisse $\mathbb Q_\ell$-sheaf on $\Prim_{n, \overline{\mathbb F}_q}$ that appears as a summand of \[L_{\univ} ^{\otimes a} \otimes L_{\univ}^{\vee \otimes b} \] for some $a \geq n > b$.
Then \[H^j_c( \Prim_n, \mathcal F)=0\] for $j> n+b + \left\lfloor \frac{a}{p} \right\rfloor -\left  \lfloor \frac{n}{p}\right \rfloor +1 $.  \end{lemma}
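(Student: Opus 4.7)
The plan is to reduce the cohomology vanishing on $\Prim_n$ to a cohomological estimate on the ``matching coefficients'' varieties $Z_{n',a,b}$ using the machinery of Section~2, and then to invoke a characteristic-$p$ bound imported from \cite{me}.

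Since $\mathcal F$ is a direct summand of $L_{\univ}^{\otimes a} \otimes L_{\univ}^{\vee \otimes b}$, it suffices to prove the vanishing for the full tensor product. First I would extend Lemma~\ref{witt-excision} from the wedge-power to the tensor-power setting; the proof is formally identical once one omits the passage to $S_{d_1}\times\cdots$-invariants. Combined with the excision sequence for $\Prim_n \hookrightarrow \Witt_n$ with closed complement $\Witt_{n-1}$, and using $R(pr_2)_!\mathcal L_{\univ} \cong L_{\univ}[-1]$ on $\Prim_n$, this produces a long exact sequence connecting $H^*_c(\Prim_n, L_{\univ}^{\otimes a} \otimes L_{\univ}^{\vee \otimes b}[-a-b])$ to the tensor-power Witt-vector cohomologies. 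Applying Lemma~\ref{Fourier-comparison} with $m_1 = a$ and $m_2 = b$ then identifies the latter, up to a $[-2n']$-shift and Tate twist $(-n')$, with $H^*_c(Z_{n',a,b}, \mathbb Q_\ell)$ for $n' \in \{n-1, n\}$. Tracking the shifts, the desired vanishing reduces to a cohomological bound of the form $H^k_c(Z_{n',a,b}, \mathbb Q_\ell) = 0$ for $k > (a-n') + 2b + \lfloor a/p \rfloor - \lfloor n'/p \rfloor + 1$, for both $n' \in \{n-1, n\}$.

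When $a$ is close to $n$ this bound is already implied by the naive estimate $H^k_c = 0$ for $k > 2\dim Z_{n',a,b} = 2(a+b-n')$, but a strict improvement is needed when $a$ is substantially larger than $n$. This improvement would be imported from \cite{me}, whose geometric source is the characteristic-$p$ identity $(1-cx)^p = 1 - c^p x^p$: a tuple $(a_1, \dots, a_a)$ with many entries repeating in multiples of $p$ maps under $(a_i) \mapsto \prod_i(1-a_i x) \bmod x^{n+1}$ to a polynomial in $x^p$, hence lies in a subvariety of the coefficient space $\mathbb A^n$ of dimension $\lfloor n/p \rfloor$. A stratification of $Z_{n,a,b}$ by repetition type, together with Deligne's Riemann hypothesis to control weights on each stratum, then produces the sharper vanishing.

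The main obstacle is precisely this characteristic-$p$ cohomological estimate on $Z_{n,a,b}$. Once it is granted from \cite{me}, the lemma follows by routine bookkeeping through the two-step reduction above; the slightly weaker contribution of $Z_{n-1,a,b}$ in the excision sequence (off by one in cohomological degree) is exactly what accounts for the ``$+1$'' in the stated bound.
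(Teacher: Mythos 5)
Your reduction from $\Prim_n$ through $\Witt_n$ and $\Witt_{n-1}$ to the varieties $Z_{n',a,b}$ via the tensor-power analogue of Lemma~\ref{witt-excision} and Lemma~\ref{Fourier-comparison} is exactly the right skeleton, and it matches the paper. However, the cohomological input you then want from \cite{me} is stated incorrectly, and this is not a bookkeeping slip — it is the crux of the argument. You claim that $H^k_c(Z_{n',a,b},\mathbb Q_\ell)$ \emph{vanishes} for $k$ above roughly $(a-n')+2b+\lfloor a/p\rfloor-\lfloor n'/p\rfloor+1$. That is false when $a$ is substantially larger than $n$: for instance, Lemma~\ref{hm-sheaf-calculation} (with $m>n$) shows that $H^{2(2m-n)}_c(Z_{n,m,m},\mathbb Q_\ell)\cong\mathbb Q_\ell(-2m)$ is nonzero at the top degree $2\dim Z_{n,m,m}$, which exceeds your claimed vanishing threshold. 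What \cite[Proposition~2.5]{me} actually provides, and what the paper uses, is not vanishing but \emph{triviality of the $S_a$-action} on $H^k_c(Z_{n',a,b},\mathbb Q_\ell)$ above that degree (the nonzero classes that survive carry the trivial $S_a$-representation).

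The missing step — and the real idea of the lemma — is how to turn ``$S_a$ acts trivially'' into ``the cohomology of $L_{\univ}^{\otimes a}\otimes L_{\univ}^{\vee\otimes b}$ vanishes.'' Each factor $L_{\univ}[-1]=R(pr_2)_!\mathcal L_{\univ}$ contributes an odd shift, so after unshifting by $[a+b]$ the $S_a$-equivariant identification picks up a sign: a trivial $S_a$-action on the Witt-vector cohomology corresponds to the action factoring through the \emph{sign} character on $H^j_c(\Prim_n, L_{\univ}^{\otimes a}\otimes L_{\univ}^{\vee\otimes b})$. The sign-isotypic component of $L_{\univ}^{\otimes a}$ is $\wedge^a L_{\univ}$, which vanishes because $a\geq n>n-1=\operatorname{rank} L_{\univ}$. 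It is this interaction between the Koszul sign of the derived shift and the rank bound (which is exactly where the hypothesis $a\geq n$ is used) that forces the cohomology to vanish; without it, the argument does not close. So your proposal has the correct reduction and correctly identifies the characteristic-$p$ phenomenon behind the bound, but it mislocates the input from \cite{me} as a raw vanishing statement and thereby skips the sign-twist argument that the proof actually hinges on.
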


\begin{proof}To do this, we will first examine the cohomology of the space $Z_{n,a,b}$ defined in Subsection~\ref{betti-bounds}. We relate them to the spaces $X_{a,n, (c_1,\dots,c_n)}$ of \cite{me}, defined as the subspace of $\mathbb A^a$ with variables $x_1,\dots,x_a$ satisfying the system of $n$ equations $\prod_{i=1}^a (1-T a_i) = 1+ c_1 T + \dots + c_n T^n \mod T^{n+1}$. 

The key fact that we will derive from \cite{me} is that the $S_a$ action on the cohomology of $X_{a,n, (c_1,\dots,c_n)}$  is trivial in degrees greater than $a-n +  \left\lfloor \frac{a}{p} \right\rfloor - \left\lfloor \frac{n}{p} \right\rfloor  +1 $. To see this, first note that the cohomology in degree $>2(a-n)$ vanishes for dimension reasons. Second, the cohomology in degree $<2(a-n)$ is handled by the first part of \cite[Proposition 2.5]{me}.  Finally the cohomology in degree is exactly $2(a-n)$ is handled by the second part of \cite[Proposition 2.5]{me}, because our condition that the degree is greater than  $a-n +  \left\lfloor \frac{a}{p} \right\rfloor - \left\lfloor \frac{n}{p} \right\rfloor  +1 $ is then equivalent to the condition  $a-n >   \left\lfloor \frac{a}{p} \right\rfloor - \left\lfloor \frac{n}{p} \right\rfloor  +1 $ of \cite[Proposition 2.5]{me}.

The space $Z_{n,a,b}$ admits a map to $\mathbb A^b$ by projection onto the last $b$ coordinates, and the fibers of this map are the spaces $X_{a,n, (c_1,\dots,c_n)}$, where $c_1,\dots,c_m$ are the coefficients to the product of the last $b$ linear factors. So by the proper base change theorem, the fact that the cohomological dimension of $\mathbb A^b$ is $2b$, and our key fact, the action of $S_a$ on the cohomology of $Z_{n,a,b}$ is trivial in degrees greater than $ a + 2b-n \left\lfloor \frac{a}{p} \right \rfloor - \left\lfloor \frac{n}{p} \right\rfloor  +1 $. 

Hence by Lemma \ref{Fourier-comparison}, the  $S_a$ action on \[H^{j}_c \left(\Witt_{n,\overline{\mathbb F}_q}, \left( R (pr_2)_! \mathcal L_{\univ} \right)^{ \otimes a} \otimes  \left( R (pr_2)_! \mathcal L_{\univ}^\vee \right)^{\otimes b}\right) \]  is trivial whenever $j>a + 2b-n \left\lfloor \frac{a}{p}\right \rfloor -\left \lfloor \frac{n}{p} \right\rfloor  +1 $. Applying this to $n-1$, we can see that the $S_a$ action on  \[H^{j-1}_c \left(\Witt_{n-1,\overline{\mathbb F}_q}, \left( R (pr_2)_! \mathcal L_{\univ} \right)^{ \otimes a} \otimes  \left( R (pr_2)_! \mathcal L_{\univ}^\vee \right)^{\otimes b}\right) \] is trivial whenever \[j-1>  a+2b + n-1  + \left\lfloor \frac{a}{p} \right\rfloor - \left \lfloor \frac{n-1 }{p}\right \rfloor +1  \geq \left(  a+2b + n + \left\lfloor \frac{a}{p} \right\rfloor -  \left\lfloor \frac{n }{p}\right \rfloor +1 \right)-1 .\]

By the long exact sequence of Lemma~\ref{witt-excision}, the action of $S_a$ on \[ H^j_c \left(\Prim_{n, \overline{\mathbb F}_q},  \bigotimes_{i=1}^{a}  L_{\univ} [-1]  \otimes \bigotimes_{i=1}^{b} L_{\univ} ^\vee [-1]  \right) \] is trivial for $j>a +2b +n+ \lfloor \frac{a}{p} \rfloor -  \lfloor \frac{n}{p} \rfloor +1 $. Hence by shifting, the action of $S_a$ on \[ H^j_c \left(\Prim_{n, \overline{\mathbb F}_q},  \bigotimes_{i=1}^{a}  L_{\univ}  \otimes \bigotimes_{i=1}^{b} L_{\univ} ^\vee \right)  \] factors through the sign character  if $j> n+b + \lfloor \frac{a}{p} \rfloor -  \lfloor \frac{n}{p} \rfloor +1 $. But the sign-equivariant part of $\bigotimes_{i=1}^{a}  L_{\univ}  $ is $\wedge^a L_{\univ}$, which vanishes, so the sign-equivariant part of the cohomology vanishes as well. Thus in fact \[ H^j_c \left(\Prim_{n, \overline{\mathbb F}_q},  \bigotimes_{i=1}^{a}  L_{\univ}  \otimes \bigotimes_{i=1}^{b} L_{\univ} ^\vee \right) =0 \]  if $j> n+b + \lfloor \frac{a}{p} \rfloor -  \lfloor \frac{n}{p} \rfloor +1 $. Hence the same is true for any summand of $\bigotimes_{i=1}^{a}  L_{\univ}  \otimes \bigotimes_{i=1}^{b} L_{\univ} ^\vee$, such as $\mathcal F$. \end{proof}

\begin{lemma} Hypothesis $\operatorname{H}(n,\rp, 0 ,\left \lfloor \frac{ (n-1)\rp }{p} \right \rfloor-\left \lfloor \frac{n}{p} \right\rfloor +1)$ is satisfied for any $n,\rp$.  \end{lemma}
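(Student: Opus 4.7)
The plan is to derive this essentially immediately from Lemma~\ref{key-verification}, by bookkeeping on which tensor powers of $L_\univ$ contain $\mathcal F$.  The setup: let $\mathcal F$ be an irreducible lisse sheaf satisfying the hypotheses of $\operatorname{H}(n,\rp,0,w)$.  By definition this means that there exist $0 \le d_1, \dots, d_\rp \le n-1$ with $\mathcal F$ appearing as a summand of $\bigotimes_{i=1}^{\rp} \wedge^{d_i}(L_\univ)$ (the factor $\det(L_\univ)^{-\rm}$ disappears since $\rm = 0$), and $\mathcal F$ does not appear as a summand of $L_\univ^{\otimes a'} \otimes L_\univ^{\vee \otimes b'}$ for any $0 \le a', b' \le n-1$.

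Set $a = \sum_{i=1}^\rp d_i$.  Since wedge powers are summands of tensor powers (we are in characteristic zero on the sheaf side, as $\ell \ne p$), $\bigotimes_{i=1}^\rp \wedge^{d_i}(L_\univ)$ is a summand of $L_\univ^{\otimes a}$, so $\mathcal F$ appears as a summand of $L_\univ^{\otimes a} \otimes L_\univ^{\vee \otimes 0}$.  The non-appearance hypothesis (applied with the pair $(a, 0)$) thus forces $a > n-1$, i.e. $a \ge n$.  On the other hand, since each $d_i \le n-1$, trivially $a \le (n-1)\rp$.

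Now apply Lemma~\ref{key-verification} with these values $a = \sum_{i=1}^\rp d_i \ge n$ and $b = 0 < n$.  It gives
\[
H^j_c(\Prim_{n,\overline{\mathbb F}_q}, \mathcal F) = 0 \quad \text{for all } j > n + 0 + \left\lfloor \frac{a}{p}\right\rfloor - \left\lfloor \frac{n}{p}\right\rfloor + 1.
\]
Using $a \le (n-1)\rp$, the right-hand threshold is at most $n + \lfloor (n-1)\rp/p \rfloor - \lfloor n/p \rfloor + 1$, so the vanishing holds in particular for all $j > n + \lfloor (n-1)\rp/p\rfloor - \lfloor n/p\rfloor + 1$.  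This is precisely Hypothesis $\operatorname{H}(n,\rp,0, \lfloor (n-1)\rp/p\rfloor - \lfloor n/p\rfloor + 1)$.

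There is no real obstacle here beyond making sure the non-appearance condition is used correctly: the only role it plays is to guarantee $a \ge n$, which is exactly the hypothesis needed to invoke Lemma~\ref{key-verification}.  Everything else is just substituting the crude but sharp bound $a \le (n-1)\rp$ into the estimate provided by that lemma.
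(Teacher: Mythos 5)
Your proof is correct and matches the paper's argument: both reduce to Lemma~\ref{key-verification} with $b=0$, using the observation that $\mathcal F$ is a summand of $L_{\univ}^{\otimes \sum d_i}$ and that the non-appearance condition forces $\sum d_i \geq n$ (which the paper phrases as the hypothesis being vacuously true when $\sum d_i < n$). The substitution $a \le (n-1)\rp$ at the end is also the same.
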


\begin{proof} Any sheaf $\mathcal F$ that is a summand of $  \bigotimes_{i=1}^{\rp} \wedge^{d_i} (L_{\univ} )$ for some $0 \leq d_1,\dots,d_{\rp}\leq n-1$ is a summand of $L_{\univ}^{\otimes (\sum_{i=1}^r d_\rp)}$. If $\sum_{i=1}^\rp d_i < n$ then the condition of Hypothesis~\ref{hypo} is not satisfied, so it is vacuously true. Otherwise, we apply Lemma~\ref{key-verification} with $a =\sum_{i=1}^\rp{d_i} \leq \rp (n-1)$ and $b=0$. \end{proof}

However, combining this with Theorem~\ref{main1} would simply recover \cite[Theorems 1.2 and 1.3]{me}.

\begin{lemma}\label{power-savings} Hypothesis $\operatorname{H}(n,\rp, 1, w= n +1 - \frac{p-2\rp}{p\rp} n)$ is satisfied for any $n,\rp$. \end{lemma}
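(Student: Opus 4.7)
The plan is to deduce this directly from Lemma~\ref{key-verification}, via a representation-theoretic repackaging followed by an elementary floor-function inequality; all the geometric content is already contained in Lemma~\ref{key-verification}.

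I would begin by fixing $\mathcal F$ satisfying Hypothesis~\ref{hypo} with $\rm=1$: $\mathcal F$ is an irreducible summand of $\det(L_{\univ})^{-1}\otimes\bigotimes_{i=1}^{\rp+1}\wedge^{d_i}(L_{\univ})$ for some $0\leq d_1\leq\cdots\leq d_{\rp+1}\leq n-1$ (we may reorder the $d_i$ freely since the tensor product is symmetric in them), but not a summand of $L_{\univ}^{\otimes a}\otimes L_{\univ}^{\vee\otimes b}$ for any $0\leq a,b\leq n-1$. The next step is to recast the former as an embedding into some $L_{\univ}^{\otimes a_0}\otimes L_{\univ}^{\vee\otimes b_0}$ so that Lemma~\ref{key-verification} applies. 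Because $L_{\univ}$ has rank $n-1$, the standard identity $\det(V)^{-1}\otimes\wedge^d V\cong \wedge^{n-1-d}(V^\vee)$ lets me combine the $\det^{-1}$ with the wedge of largest index; this choice minimizes $b_0$ and maximizes $d_{\rp+1}$ below, producing the sharpest bound. The result is that $\mathcal F$ is a summand of $L_{\univ}^{\otimes a_0}\otimes L_{\univ}^{\vee\otimes b_0}$ with $a_0=\sum_{i=1}^{\rp}d_i$ and $b_0=n-1-d_{\rp+1}$.

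The crucial observation is that $0\leq b_0\leq n-1$, so if we also had $a_0\leq n-1$ then $\mathcal F$ would be a summand of $L_{\univ}^{\otimes a_0}\otimes L_{\univ}^{\vee\otimes b_0}$ with both exponents in $[0,n-1]$, contradicting the second clause of the hypothesis. Hence $a_0\geq n>b_0$, and Lemma~\ref{key-verification} gives
\[ H^j_c(\Prim_{n,\overline{\mathbb F}_q},\mathcal F)=0 \text{ for all } j>n+b_0+\lfloor a_0/p\rfloor-\lfloor n/p\rfloor+1. \]

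Finally, I would verify $n+b_0+\lfloor a_0/p\rfloor-\lfloor n/p\rfloor+1\leq n+w$ by direct arithmetic. When $p\leq 2\rp$ one has $w\geq n+1$, so $n+w\geq 2n+1$, and the vanishing is automatic from $\dim\Prim_n=n$. When $p>2\rp$, substituting $b_0=n-1-d_{\rp+1}$ and using $a_0\leq \rp d_{\rp+1}$ (from $d_i\leq d_{\rp+1}$) together with $d_{\rp+1}\geq a_0/\rp\geq n/\rp$ yields
\[ n+b_0+\lfloor a_0/p\rfloor-\lfloor n/p\rfloor+1 \leq 2n-\frac{d_{\rp+1}(p-\rp)}{p}-\lfloor n/p\rfloor \leq 2n+1-\frac{n}{\rp} \leq 2n+1-\frac{(p-2\rp)n}{p\rp}=n+w. \]
The only informative step is extracting $a_0\geq n$ from the ``not a summand'' hypothesis, which then unlocks Lemma~\ref{key-verification}; everything else is bookkeeping, so there is no serious obstacle.
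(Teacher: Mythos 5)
Your proof is correct and follows essentially the same route as the paper: sort the $d_i$ in increasing order, absorb $\det^{-1}$ into the largest wedge power via $\det^{-1}\otimes\wedge^{d_{\rp+1}}\cong\wedge^{n-1-d_{\rp+1}}$ of the dual, observe that the ``not a summand'' clause forces $\sum_{i=1}^{\rp}d_i\geq n$, apply Lemma~\ref{key-verification}, and finish with the floor-function arithmetic. The only cosmetic difference is that you phrase the reduction to $a_0\geq n$ as a contradiction while the paper phrases it as vacuity, and your chain of inequalities is organized slightly differently, but the bounds used ($\lfloor a_0/p\rfloor\leq a_0/p\leq\rp d_{\rp+1}/p$, $d_{\rp+1}\geq n/\rp$, $n/p-\lfloor n/p\rfloor<1$) match the paper's.
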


 \begin{proof} We may assume $p>2\rp$ as if $ p \leq 2\rp $ then the claim follows immediately from the fact that $H^j ( \Prim_{n, \overline{\mathbb F}_q}, \mathcal F)=0$ for $j> 2n$ because $\dim \Prim_n = n$.
 
 Let $\mathcal F$ be a summand of $\det^{-1} (L_{\univ})\otimes \bigotimes_{i=1}^{\rp+1} \wedge^{d_i} (L_{\univ} )$ for some $0 \leq d_1,\dots,d_{\rp+1} \leq n-1$. Without loss of generality, $0 \leq d_1 \leq \dots \leq d_{\rp}\leq d_{\rp+1} \leq n-1$. Then because $\det^{-1} (L_{\univ})\otimes \wedge^{d_{\rp+1}} (L_{\univ}) = \wedge^{n-1-\rp} (L_{\univ}^\vee)$, $\mathcal F$ is a summand of $L_{\univ} ^{\otimes (\sum_{i=1}^\rp  d_i)} \otimes L_{\univ}^{\vee \otimes (n-1-d_{\rp+1}) }$. If $\sum_{i=1}^\rp d_i < n$ then Hypothesis $\operatorname{H}(n,\rp,1,n +1 - \frac{p-2\rp}{p\rp} n)$ is vacuously true. Otherwise, $d_{\rp+1} \geq d_\rp \geq \frac{n}{\rp} $ and we apply Lemma~\ref{key-verification} to see that the cohomology groups vanish for \[ j> n + (n-1-d_{\rp+1}) + \left \lfloor \frac{ \sum_{i=1}^{\rp} d_\rp }{p} \right\rfloor  - \left \lfloor \frac{n}{p}\right \rfloor + 1,\] and we have \[(n-1-d_{\rp+1}) +\left \lfloor \frac{ \sum_{i=1}^r d_\rp }{p}\right \rfloor  -\left \lfloor \frac{n}{p} \right\rfloor + 1 \leq  n -1 - d_{\rp+1} + \frac{\sum_{i=1}^r d_\rp}{p} - \frac{n}{p} + 2 \] \[ \leq n - 1 - \frac{p-\rp}{p}  d_{\rp+1} - \frac{n}{p} + 2 \leq  n -1 - \frac{p-\rp}{p\rp} n - \frac{n}{p}  +2 = n  +1 - \frac{p - 2\rp}{p\rp} n\] as desired.

\end{proof} 

We could apply the same technique with $\rp,\rm \geq 2$ but we would not obtain a nontrivial bound this way.

\begin{cor}\label{combined-result} Let $n,\rp$ be natural numbers and $\mathbb F_q$ a finite field of characteristic $p$. Assume also that $n> 2 \rp+1$ and if $n=4$ or $5$ that the characteristic of $\mathbb F_q$ is not $2$. Let $C_{\rp,1} = (2 +\rp)^{\rp+1}$.  Let $\alpha_1,\dots,\alpha_{\rp+1}$ be imaginary numbers. Let $\epsilon_\chi$ be the $\epsilon$-factor of $L(\chi)$.   Then

\[ \frac{1}{ (q^{n}- q^{n-1}) } \sum_{\chi \in S_{n,q}}  \epsilon_\chi^{-1} \prod_{i=1}^{\rp+1} L(1/2- \alpha_i, \chi)\]   \begin{equation}\label{combined-result-formula} =    \sum_{j=1}^{\rp+1} q^{\alpha_j (n-1)} \left(\frac{1}{1- q^{-\frac{1}{2} - \alpha_j}  }\prod_{i \neq j}  \frac{ 1- q^{-1 + \alpha_i - \alpha_j}}{ (1-q^{-\frac{1}{2}+ \alpha_i}) (1-q^{\alpha_i-\alpha_j})}\right) + O \left(  \sqrt{q}  \left(  q^{ - \frac{p-2\rp}{2p\rp} } C_{\rp,1} \right) ^n  n^{\rp+1} \right) . \end{equation}

\end{cor}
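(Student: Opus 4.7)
The argument is simply to combine Lemma~\ref{power-savings} with the asymptotic identity established in the Corollary immediately preceding the statement, and then evaluate the resulting main term as an explicit Euler product. By Lemma~\ref{power-savings}, Hypothesis $\operatorname{H}(n,\rp,1,w)$ holds with $w = n+1-\frac{p-2\rp}{p\rp}n$. The hypothesis $n > 2\rp+1 = 2\max(\rp,1)+1$ is assumed, so that preceding Corollary applies; moreover, by Lemma~\ref{m-estimate} this forces $m>\rp$, so the only subsets $S \subseteq \{1,\dots,\rp+1\}$ with $m \mid \rp-|S|$ are those with $|S|=\rp$. Writing $S^c=\{j\}$ and indexing by $j \in \{1,\dots,\rp+1\}$, the prefactor $\prod_{i\notin S} q^{\alpha_i(n-1)}$ collapses to $q^{\alpha_j(n-1)}$, matching the outer factor in each summand of \eqref{combined-result-formula}.

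The plan is then to evaluate, for each $j$, the inner polynomial sum
\[ \sum_{\substack{f_1,\dots,f_{\rp+1}\textrm{ monic}\\ \prod_{i\neq j}f_i/f_j \in T^{\mathbb Z}}} \Bigl(\prod_{i\neq j}|f_i|^{-1/2+\alpha_i}\Bigr)\,|f_j|^{-1/2-\alpha_j} \]
as an Euler product, in direct parallel with the computations in the proofs of Lemma~\ref{Euler-bounds} and Lemma~\ref{fourth-moment}. The factor at the prime $T$ is unconstrained and equals $(1-q^{-1/2-\alpha_j})^{-1}\prod_{i\neq j}(1-q^{-1/2+\alpha_i})^{-1}$. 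At each prime $\pi\neq T$, the constraint $v_\pi(f_j)=\sum_{i\neq j}v_\pi(f_i)$ lets one eliminate $v_\pi(f_j)$, leaving an unrestricted product of geometric series in the remaining variables, which gives $\prod_{i\neq j}(1-|\pi|^{-1+\alpha_i-\alpha_j})^{-1}$. Multiplying over $\pi\neq T$ via the standard identity $\prod_{\pi\neq T}(1-|\pi|^{-s})^{-1} = (1-q^{-s})/(1-q^{1-s})$ yields $\prod_{i\neq j}(1-q^{-1+\alpha_i-\alpha_j})/(1-q^{\alpha_i-\alpha_j})$. Assembling these factors produces precisely the $j$-th summand in the stated main term.

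Finally, I would verify the error term by substituting $w = n+1 - \frac{p-2\rp}{p\rp}n$ into the bound $O(q^{(w-n)/2}C_{\rp,1}^n n^{\rp+1})$ coming from the preceding Corollary. This gives $q^{(w-n)/2} = \sqrt{q}\,q^{-\frac{p-2\rp}{2p\rp}n}$, which combines with $C_{\rp,1}^n$ to exactly the stated $\sqrt{q}\bigl(q^{-\frac{p-2\rp}{2p\rp}}C_{\rp,1}\bigr)^n n^{\rp+1}$. There is no substantive obstacle here: all the deep work lies in Theorem~\ref{main} and Lemma~\ref{power-savings}, and what remains is the routine Euler-product evaluation of the main term, a direct generalization of the $\rp=\rm=2$ case worked out in Lemma~\ref{fourth-moment}, together with the bookkeeping needed to identify the singleton $S^c=\{j\}$ as the index of the surviving summand.
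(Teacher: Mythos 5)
Your proposal is correct and follows essentially the same path as the paper's proof: invoke Lemma~\ref{power-savings} to verify Hypothesis $\operatorname{H}(n,\rp,1,w)$ with $w = n+1-\tfrac{p-2\rp}{p\rp}n$, feed this into Theorem~\ref{main} (which applies since $n>2\rp+1 = 2\max(\rp,1)+1$ forces $m>\max(\rp,1)$ by Lemma~\ref{m-estimate}, leaving only the $|S|=\rp$ terms), and then evaluate the diagonal sum. The only cosmetic difference is in the last step: the paper first pulls out the $T$-power parts of the $f_i$ to reduce to polynomials prime to $T$ with $\prod_{i\neq j}f_i = f_j$ and then observes $f_j$ is determined, whereas you compute the Euler product prime by prime, eliminating $v_\pi(f_j)$ at each $\pi\neq T$ and then summing via the identity $\prod_{\pi\neq T}(1-|\pi|^{-s})^{-1}=(1-q^{-s})/(1-q^{1-s})$. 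These are the same calculation in slightly different packaging. Your error-term substitution $q^{(w-n)/2} = \sqrt{q}\,q^{-\frac{p-2\rp}{2p\rp}n}$ is also correct, matching the paper (note the $n^{\rp+1}$ factor comes from the error bound in Theorem~\ref{main} itself, not the Section~4 Corollary, which has a typographical omission of this factor).
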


\begin{proof} By plugging Lemma~\ref{power-savings} into Theorem~\ref{main}, we obtain an identical formula to \eqref{combined-result-formula}, except that the main term is 

\[    \sum_{j=1}^{\rp+1}  q^{ \alpha_j (n-1)} \sum_{ \substack{ f_1,\dots, f_{\rp+1} \in \mathbb F_q[T] \\ \textrm{monic} \\ (\prod_{i\neq j}  f_i)  /f_j \in T^{\mathbb Z} }}  ( \prod_{i\neq j } |f_i|^{ -\frac{1}{2} +\alpha_i})   |f_j|^{ - \frac{1}{2} - \alpha_j} .\]

But by factoring out powers of $T$, and then noting that $f_j$ is uniquely determined by the other variables, we obtain  \[ \sum_{ \substack{ f_1,\dots, f_{\rp+1} \in \mathbb F_q[T] \\ \textrm{monic} \\ (\prod_{i\neq j}  f_i)  /f_j \in T^{\mathbb Z} }} ( \prod_{i\neq j } |f_i|^{ -\frac{1}{2} +\alpha_i})   |f_j|^{ - \frac{1}{2} - \alpha_j}  \]

\[ =\left(\prod_{i \neq j} \frac{1}{ 1-q^{-\frac{1}{2}+ \alpha_i}}\right)  \frac{1}{1- q^{-\frac{1}{2} - \alpha_j}  }\sum_{ \substack{ f_1,\dots, f_{\rp+1} \in \mathbb F_q[T] \\ \textrm{monic} \\ (\prod_{i\neq j}  f_i)  = f_j \\ f_1,\dots, f_{\rp+1} \textrm{ prime to } T}  }( \prod_{i\neq j } |f_i|^{ -\frac{1}{2} +\alpha_i})   |f_j|^{ - \frac{1}{2} - \alpha_j} \]

\[ =\left(\prod_{i \neq j} \frac{1}{ 1-q^{-\frac{1}{2}+ \alpha_i}}\right)  \frac{1}{1- q^{-\frac{1}{2} - \alpha_j}  }\sum_{ \substack{ f_1,\dots, f_{j-1}, f_{j+1}, \dots  f_{\rp+1} \in \mathbb F_q[T] \\ \textrm{monic} \\ f_1,\dots, f_{j-1},f_{j+1},\dots f_{\rp+1} \textrm{ prime to } T} } ( \prod_{i\neq j } |f_i|^{ -1  +\alpha_i - \alpha_j })\]

\[ =\left(\prod_{i \neq j} \frac{1}{ 1-q^{-\frac{1}{2}+ \alpha_i}}\right)  \frac{1}{1- q^{-\frac{1}{2} - \alpha_j}  }\prod_{i \neq j}  \frac{ 1- q^{-1 + \alpha_i - \alpha_j}}{ 1-q^{\alpha_i-\alpha_j}} .\]

Plugging this in, we get \eqref{combined-result-formula}. 
 
\end{proof}

\section{Spaces defined by Hast and Matei}

The results in this section are not directly related to the main results of this paper, but use similar techniques. In it, we recover by a more direct geometric argument calculations by Hast and Matei of certain cohomology groups of certain spaces. We also sketch how more cohomology groups might be computed conditionally on Hypothesis \ref{hypo}.

Using Katz's equidistribution results for the $L$-functions of Dirichlet characters \cite{WVQKR}, and performing a Fourier transform over the group of Dirichlet characters, Rodgers calculated the variance of certain arithmetic functions in the $q \to \infty$ limit \cite{Rodgers}. Hast and Matei defined schemes in a natural way so that their cohomology would control these variances, and then used Rodgers' calculations to control the top nonvanishing cohomology groups \cite{HastMatei}. This has the effect of starting with geometric information (the monodromy calculations of Katz), proceeding to numerical information, and deriving geometric information again. Using the Witt vector Fourier transform calculation in Lemma~\ref{Fourier-comparison}, it is possible to avoid the numerical step and reason entirely geometrically. This answers a question asked in \cite{HastMatei}.

\begin{lemma}\label{hm-sheaf-calculation} Let $n$ and $m$ be natural numbers with $m>n$.  Assume that $n\geq 3$, and, if $n=3$, that $p\neq 2,5$.

Then the cohomology group \[H^j_c\left( \Witt_{n, \overline{\mathbb F}_q},  \left( R (pr_2)_! \mathcal L_{\univ} \right)^{ \otimes m} \otimes  \left( R (pr_2)_! \mathcal L_{\univ}^\vee \right)^{\otimes m} \right) \] for $j \geq 2n+2m$ are described as follows: \begin{itemize}

\item If $j> 2n+2m$ and $j\neq 4m$, then this cohomology group vanishes.

\item If $j=4m$, this cohomology group is $\mathbb Q_\ell (-2m)$ with the trivial $S_m \times S_m$ action.

\item If $j=2n+2m$, this cohomology group is $\operatorname{Hom}_{GL_{n-1}} (V^\otimes m, V^{\otimes m}) (-n-m)$ where $V$ is the $n-1$-dimensional standard representation of $GL_{n-1}$ over $\mathbb Q_\ell$, with $S_m \times S_m$ acting by permuting the factors. \end{itemize}\end{lemma}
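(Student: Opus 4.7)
The strategy is to combine the Witt-vector Fourier identification of Lemma~\ref{Fourier-comparison} with an open-closed excision on $\Witt_n = \Prim_n \sqcup \Witt_{n-1}$, inducting on $n$. Lemma~\ref{Fourier-comparison} translates the statement to a computation of $H^k_c(Z_{n,m,m,\overline{\mathbb F}_q},\mathbb Q_\ell)$ for $k = j-2n$; here $Z_{n,m,m} = \mathbb A^m \times_{\mathbb A^n} \mathbb A^m$ is, for $m>n$, an irreducible local complete intersection of dimension $2m-n$, since the map $\pi:\mathbb A^m \to \mathbb A^n$ sending a tuple to its first $n$ (signed) elementary symmetric polynomials is surjective with smooth irreducible fibers of dimension $m-n$.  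This already handles $j > 4m$ by dimensional vanishing, and yields $j=4m$: the top compactly supported cohomology of an irreducible variety is $\mathbb Q_\ell(-(2m-n))$ with trivial $S_m \times S_m$ action, which becomes $\mathbb Q_\ell(-2m)$ after the Tate twist from Fourier.

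For the remaining degrees I would analyze the excision sequence for $\Witt_n = \Prim_n \sqcup \Witt_{n-1}$ on $\Witt_n$ directly.  On $\Prim_n$ the complex $R(pr_2)_!\mathcal L_{\univ}$ is concentrated in cohomological degree $1$ where it equals $L_{\univ}$, so $\mathcal G|_{\Prim_n} \cong L_{\univ}^{\otimes m} \otimes L_{\univ}^{\vee\otimes m}[-2m]$.  Since $\dim\Prim_n = n$, $H^j_c(\Prim_n,\mathcal G)$ vanishes for $j > 2n+2m$, and at $j = 2n+2m$ equals $H^{2n}_c(\Prim_n, L_{\univ}^{\otimes m} \otimes L_{\univ}^{\vee\otimes m})$.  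By Lemma~\ref{easy-rep-characterization} the geometric monodromy of $L_{\univ}$ contains $SL_{n-1}$, so Schur--Weyl duality plus Poincar\'e duality on the smooth affine $\Prim_n$ identifies this top cohomology with $\operatorname{Hom}_{GL_{n-1}}(V^{\otimes m}, V^{\otimes m})$ (with the required Tate twist) as an $S_m \times S_m$-module, the $S_m \times S_m$ action arising from permuting the tensor factors.

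On the closed stratum $\Witt_{n-1}$, the restricted complex is canonically the analogous sheaf for $n-1$; the inductive hypothesis (applicable since $m > n-1$) ensures that in the range $j \geq 2n+2m$ the only nonzero contribution comes at $j = 4m$, namely $\mathbb Q_\ell(-2m)$.  Plugging into the excision long exact sequence completes the case analysis: at $j=4m$ the $\Prim_n$ side vanishes (as $4m-2m > 2n$) and $\mathbb Q_\ell(-2m)$ is inherited from $\Witt_{n-1}$; in the open range $2n+2m < j < 4m$ both sides vanish; and at $j = 2n+2m$ the neighboring boundary terms from $\Witt_{n-1}$ vanish by the inductive hypothesis (since neither $2n+2m-1$ nor $2n+2m$ equals $4m$ or $2(n-1)+2m$ when $m > n$), so $H^{2n+2m}_c(\Witt_n,\mathcal G) \cong H^{2n+2m}_c(\Prim_n,\mathcal G)$, yielding the claimed Hom-space.

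The main obstacle will be the careful bookkeeping of Tate twists in the $\Prim_n$ step, matching the twist arising from Poincar\'e duality on $\Prim_n$ against the twist predicted on the $Z_{n,m,m}$ side by the Fourier identification.  The base case $n=3$ of the induction also requires separate verification, since the inductive hypothesis at $n=2$ is outside the range of this lemma; one can either analyze $Z_{2,m,m}$ directly or set up the excision with a rank-one analogue of the $L_{\univ}$ argument on $\Prim_2$.
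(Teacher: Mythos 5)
Your proposal is structurally close to the paper's but deviates in two places, and one of them contains a real gap.

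The paper decomposes $\Witt_n$ into \emph{three} strata — $\Prim_n$, $\Witt_{n-1}\setminus\{0\}$, and the single point $\{0\}$ — and handles each directly, with no induction. The key observation is that $R(pr_2)_!\mathcal L_{\univ}$ and its dual are concentrated in degree $1$ on all of $\Witt_n\setminus\{0\}$ (not just $\Prim_n$, as in your proposal), because a nontrivial character sheaf on $\mathbb A^1$ has compactly supported cohomology only in degree $1$; this bounds the cohomology of $\Witt_{n-1}\setminus\{0\}$ by degree $2(n-1)+2m$, and excision then reduces to $\Prim_n$ and $\{0\}$. The $j=4m$ contribution is read off directly from $\{0\}$: there $\mathcal L_{\univ}$ is the trivial sheaf on $\mathbb A^1$, so each factor $R(pr_2)_!\mathcal L_{\univ}$ sits in degree $2$ and the whole complex at $\{0\}$ is $\mathbb Q_\ell[4m](-2m)$. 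This sidesteps any geometric analysis of $Z_{n,m,m}$.

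The gap in your proposal is the claim that $\pi:\mathbb A^m\to\mathbb A^n$ is ``surjective with smooth irreducible fibers of dimension $m-n$,'' from which you deduce that $Z_{n,m,m}$ is irreducible and hence that its top compactly supported cohomology is one-dimensional, giving the $j=4m$ case. The fibers of $\pi$ are not smooth: the fiber over $0\in\mathbb A^n$ is a cone with a singularity at the origin, and the fibers over other non-generic points are likewise singular. Irreducibility of $Z_{n,m,m}$ is also not obvious from these considerations. As a result the $j=4m$ case is not actually established by the argument you give, and this is precisely the case the paper handles most cleanly via the $\{0\}$ stratum. Your second departure — replacing the closed-stratum calculation by induction on $n$ with a base case you acknowledge is unresolved — compounds the issue, since the degenerate behavior at $\{0\}$ eventually has to surface at the bottom of the induction. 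Adding a third stratum $\{0\}$ to your excision and computing the stalk there (as the paper does) would close both gaps at once and eliminate the need for induction.
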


\begin{proof} We stratify $\Witt_n$ into, first, the open subset $\Prim_n = \Witt_n \setminus \Witt_{n-1}$, second, $\Witt_{n-1} \setminus \{0\}$, and third, the point $\{0\}$. We will calculate the cohomology independently on each of the three sets, then combine the information.

On $\Witt_n \setminus \{0\}$, $R (pr_2)_! \mathcal L_{\univ} $ and $R (pr_2)_! \mathcal L_{\univ}^\vee$ are supported  in degree one, so $\left( R (pr_2)_! \mathcal L_{\univ} \right)^{ \otimes m} \otimes  \left( R (pr_2)_! \mathcal L_{\univ}^\vee \right)^{\otimes m}$ is supported in degree $2m$. Thus the cohomology of $\Witt_{n-1}$ with that complex is supported in degree $\leq 2(n-1) + 2m$. Hence by excision, the natural map \[ H^j _c \left( \Prim_{n,\overline{\mathbb F}_q }, \left( R (pr_2)_! \mathcal L_{\univ} \right)^{ \otimes m} \otimes  \left( R (pr_2)_! \mathcal L_{\univ}^\vee \right)^{\otimes m}\right)\] \[\to  H^j _c \left( \Witt_{n,\overline{\mathbb F}_q} \setminus \{0\}), \left( R (pr_2)_! \mathcal L_{\univ} \right)^{ \otimes m} \otimes  \left( R (pr_2)_! \mathcal L_{\univ}^\vee \right)^{\otimes m}\right)\]  is an isomorphism in degrees $> 2(n-1) + 2m+1$. 

Again applying the cohomological dimension bound,  $H^j _c \left( \Prim{n,\overline{\mathbb F}_q} , \left( R (pr_2)_! \mathcal L_{\univ} \right)^{ \otimes m} \otimes  \left( R (pr_2)_! \mathcal L_{\univ}^\vee \right)^{\otimes m}\right)$ is supported in degrees $\leq 2n+2m$. Furthermore

\[ 
H^{2m+2n} _c \left( \Prim{n,\overline{\mathbb F}_q} , \left( R (pr_2)_! \mathcal L_{\univ} \right)^{ \otimes m} \otimes  \left( R (pr_2)_! \mathcal L_{\univ}^\vee \right)^{\otimes m}\right) = H^{2n} \left( \Prim_{n,\overline{\mathbb F}_q},  L_{\univ}^{\otimes m}   \otimes L_{\univ}^{\vee \otimes m } (-m) \right)\] 
with the Tate twist because $R^1 (pr_2)_! \mathcal L_{\univ}^\vee = L_{\univ}^\vee(-1))$.  Applying Poincare duality, we have
\[  H^{2n} \left( \Prim_{n,\overline{\mathbb F}_q},  L_{\univ}^{\otimes m}   \otimes L_{\univ}^{\vee \otimes m } (-m) \right) = H^0 \left(\Prim_{n,\overline{\mathbb F}_q},  L_{\univ}^{\vee \otimes m}   \otimes L_{\univ}^{ \otimes m }  \right)^\vee  (-n-m)\] \[  = \Hom( L_{\univ}^{\otimes m}, L_{\univ}^{\otimes m} )^\vee  (-n-m) = \Hom (L_{\univ}^{\otimes m}, L_{\univ}^{\otimes m}) (-n-m).\]

By \cite[Theorem 5.1]{WVQKR}, the geometric monodromy group of $L_{\univ}$ is contained between $SL_{n-1}$ and $GL_{n-1}$. Letting $V$ be the standard representation of $GL_{n-1}$, we have \[ \Hom_{GL_{n-1}} ( V^{\otimes m}, V^{\otimes m}) \subseteq  \Hom (L_{\univ}^{\otimes m}, L_{\univ}^{\otimes m})  \subseteq \Hom_{SL_{n-1}}(V^{\otimes m}, V^{\otimes m}).\] Because the center of $GL_{n-1}$ acts by scalars on $V^{\otimes m}$, every $SL_{n-1}$-equivariant endomorphism is also $GL_{n-1}$-equivariant, so all three of these vector spaces are equal, and \[H^{2m+2n} \left( \Witt_{n,\overline{\mathbb F}_q}\setminus \{0\}, \left( R (pr_2)_! \mathcal L_{\univ} \right)^{ \otimes m} \otimes  \left( R (pr_2)_! \mathcal L_{\univ}^\vee \right)^{\otimes m}\right) = \operatorname{Hom}_{GL_{n-1}} (V^\otimes m, V^{\otimes m}) (-n-m).\]

 Observe that at the point $0$,  $R (pr_2)_! \mathcal L_{\univ} $ and $R (pr_2)_! \mathcal L_{\univ}^\vee$ are one-dimensional vector spaces in degree $2$, so \[\left( R (pr_2)_! \mathcal L_{\univ} \right)^{ \otimes m} \otimes  \left( R (pr_2)_! \mathcal L_{\univ}^\vee \right)^{\otimes m}= \mathbb Q_\ell [4m](-2m).\] Thus \[H^j_c\left( \{0\},  \left( R (pr_2)_! \mathcal L_{\univ} \right)^{ \otimes m} \otimes  \left( R (pr_2)_! \mathcal L_{\univ}^\vee \right)^{\otimes m} \right)\] is $\mathbb Q_\ell(-2m)$ if $j=4m$ and $0$ otherwise.
 
 We now apply the excision exact sequence to calculate the cohomology of $\Witt_n$. In degrees at least $2n+2m$, the only nonvanishing terms are in degrees $2n+2m$ and $4m$. As they are both in even degrees the connecting homomorphism between them vanishes, and thus the cohomology of the total space is simply the sum of the contributions from $\{0\}$ and $\Witt_n - \{0\}$, as stated. (In general, it could be an extension, but since $m>n$ they are in different degrees.)
\end{proof}

\begin{cor} Let $n$ and $m$ be natural numbers with $m>n$.  Assume that $n\geq 3$, and, if $n=3$, that $p\neq 2,5$. Then we have

\begin{itemize} 
\item If $j> 2m$ and $j\neq 4m$, then $H^j_c (Z_{n,m,m, \overline{\mathbb F}_q},\mathbb Q_\ell) =0$.

\item If $j=4m-2n$, then $H^j_c (Z_{n,m,m, \overline{\mathbb F}_q},\mathbb Q_\ell)= \mathbb Q_\ell (-2m)$ with the trivial $S_m \times S_m$ action.

\item If $j=2m$, then $H^j_c (Z_{n,m,m, \overline{\mathbb F}_q},\mathbb Q_\ell)= \operatorname{Hom}_{GL_{n-1}} (V^\otimes m, V^{\otimes m}) (-n-m)$ where $V$ is the $n-1$-dimensional standard representation of $GL_{n-1}$ over $\mathbb Q_\ell$, with $S_m \times S_m$ acting by permuting the factors. \end{itemize}\end{cor}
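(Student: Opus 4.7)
The plan is to deduce this corollary as a direct translation of Lemma~\ref{hm-sheaf-calculation} via the Witt-vector Fourier identification of Lemma~\ref{Fourier-comparison}, specialized to $m_1 = m_2 = m$. That specialization yields an $S_m \times S_m$-equivariant isomorphism
\[
H^j_c\bigl(\Witt_{n,\overline{\mathbb F}_q},  (R(pr_2)_! \mathcal L_{\univ})^{\otimes m} \otimes (R(pr_2)_! \mathcal L_{\univ}^\vee)^{\otimes m}\bigr) \cong H^{j-2n}_c\bigl(Z_{n,m,m,\overline{\mathbb F}_q}, \mathbb Q_\ell(-n)\bigr)
\]
for every $j \in \mathbb Z$. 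Setting $j' = j - 2n$ and absorbing the Tate twist $(-n)$ into the coefficient group, the three cases of Lemma~\ref{hm-sheaf-calculation} (vanishing for $j > 2n + 2m$, $j \neq 4m$; the class $\mathbb Q_\ell(-2m)$ in degree $j = 4m$; and the $\Hom_{GL_{n-1}}(V^{\otimes m}, V^{\otimes m})(-n-m)$ class in degree $j = 2n + 2m$) translate respectively into the three bullets of the corollary, concentrated in the exceptional degrees $j' = 4m - 2n$ and $j' = 2m$, with vanishing throughout the remainder of the range $j' > 2m$.

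The $S_m \times S_m$-equivariance of the Fourier identification transfers the group actions directly, so the trivial action on the top class and the permutation action on the $\Hom_{GL_{n-1}}(V^{\otimes m}, V^{\otimes m})$ are inherited from Lemma~\ref{hm-sheaf-calculation} without further work. The only point requiring a little attention is keeping track of Tate twists through the degree-shifted identification, but this is purely formal bookkeeping. Since Lemma~\ref{hm-sheaf-calculation} has already done all of the substantive geometric work, via the stratification of $\Witt_n$ into $\{0\}$, $\Witt_{n-1} \setminus \{0\}$, and $\Prim_n$ together with Poincar\'e duality and Katz's monodromy computation $SL_{n-1} \subseteq G_{\mathrm{geom}}(L_{\univ}) \subseteq GL_{n-1}$, there is no remaining obstacle: the corollary is essentially a one-line consequence. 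The only conceivable subtlety one might want to verify explicitly is that the vanishing for $j' > 2m$, $j' \neq 4m - 2n$ cannot be improved, since Lemma~\ref{hm-sheaf-calculation} provides a genuine nonzero class in degree $4m$ on the Witt side coming from the stalk at $0$, which produces the class in degree $4m - 2n$ on $Z_{n,m,m}$ and cannot be absorbed into any boundary map (the adjacent degree on the Witt side vanishes because both $\Prim_n$ and the boundary strata contribute only in the stated degrees).
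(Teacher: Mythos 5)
Your approach is exactly the paper's: the paper's proof of this corollary is the one-liner ``This follows immediately from Lemmas~\ref{Fourier-comparison} and~\ref{hm-sheaf-calculation},'' specializing the Fourier comparison to $m_1=m_2=m$ and reindexing by the degree shift $j\mapsto j-2n$ just as you do. One small caution: the Tate-twist bookkeeping you wave off as ``purely formal'' is worth carrying out explicitly, since the coefficient $\mathbb{Q}_\ell(-n)$ appearing on the $Z_{n,m,m}$ side of Lemma~\ref{Fourier-comparison} shifts the weights relative to the Witt-side statement of Lemma~\ref{hm-sheaf-calculation}, and one should confirm the twists in the corollary's three bullets line up after untwisting.
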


\begin{proof} This follows immediately from Lemmas~\ref{Fourier-comparison} and~\ref{hm-sheaf-calculation}. \end{proof}

\begin{prop}Let $n$ and $m$ be natural numbers with $m>n$.  

 Let $X_{2, n,m}$ be the subset of $\mathbb P^{2m-1}$ with projective coordinates $(a_{1},\dots,a_{m} ,b_{1},\dots,b_{m})$ such that $\prod_{i=1}^{m_1} ( 1 - a_ix )  \equiv \prod_{i=1}^{m_2} (1-b_i x) \mod x^{n+1}$ (matching the definition of \cite[\S2]{HastMatei}).   
 
 Assume that $n\geq 3$, and, if $n=3$, that $p\neq 2,5$. Then we have
 
 \begin{itemize} 
\item If $j> 4m-2m-2$ or $j$ is odd and $j>2m-2$, then $H^j_c (Z_{n,m,m, \overline{\mathbb F}_q},\mathbb Q_\ell) =0$.

\item If $j>2m-2$ is even, then $H^j_c (Z_{n,m,m, \overline{\mathbb F}_q},\mathbb Q_\ell)= \mathbb Q_\ell \left(-\frac{j}{2}\right)$ with the trivial $S_m \times S_m$ action.

\item If $j=2m-2$, then $H^j_c (Z_{n,m,m, \overline{\mathbb F}_q},\mathbb Q_\ell)$ is an extension of $\mathbb Q_\ell(1-m) $ by $ \operatorname{Hom}_{GL_{n-1}} (V^\otimes m, V^{\otimes m}) (1-n-m)$ where $V$ is the $n-1$-dimensional standard representation of $GL_{n-1}$ over $\mathbb Q_\ell$, with $S_m \times S_m$ acting by permuting the factors. \end{itemize}

\end{prop}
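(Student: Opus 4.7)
The plan is to express $Z := Z_{n,m,m}$ as the affine cone over $X := X_{2,n,m}$ and read off the cohomology of $X$ from the already-computed cohomology of $Z$ via a Gysin long exact sequence. Each defining equation $e_k(a_1,\dots,a_m) = e_k(b_1,\dots,b_m)$ is homogeneous of degree $k$ under the diagonal scaling $(a,b) \mapsto (\lambda a, \lambda b)$, so $Z$ is $\mathbb G_m$-invariant with unique fixed point at the origin, and the punctured cone $U := Z \setminus \{0\}$ realizes the complement of the zero section in the total space of $\mathcal O_X(-1)$. Excision for $\{0\} \hookrightarrow Z$ identifies $H^j_c(U)$ with $H^j_c(Z)$ for $j \geq 2$, and the Gysin long exact sequence for this line bundle reads
\[
\cdots \to H^j_c(U) \to H^{j-2}(X)(-1) \xrightarrow{\cup h} H^j(X) \to H^{j+1}_c(U) \to \cdots,
\]
the middle map being, up to sign, cup product with the hyperplane class restricted to $X$.

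With the values of $H^j_c(Z)$ from the preceding corollary substituted in, the high-degree portion of the argument is a downward induction. For $2m < j < 4m-2n$ both $H^j_c(Z)$ and $H^{j+1}_c(Z)$ vanish, so $\cup h$ is an isomorphism $H^{j-2}(X)(-1) \xrightarrow{\sim} H^j(X)$; and at $j = 4m-2n$, where $H^{4m-2n}(X) = 0$ by the dimension bound $\dim X = 2m-1-n$, the entire class $H^{4m-2n}_c(Z) = \mathbb Q_\ell(-2m)$ maps isomorphically onto $H^{4m-2n-2}(X)(-1)$. Propagating this downward computes $H^j(X) \cong \mathbb Q_\ell(-j/2)$ for every even $j$ in the range $2m-2 < j \leq 4m-2n-2$. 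The same chain of Gysin isomorphisms simultaneously exhibits every odd-degree $H^{2k-1}(X)$ with $m \leq k \leq 2m-n-1$ as a Tate twist of $H^{2m-1}(X)$, so the vanishing claim for odd $j > 2m-2$ reduces to showing that $H^{2m-1}(X) = 0$.

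The main obstacle is this reduced odd-degree vanishing together with the parallel claim that the leftmost map in the Gysin sequence at $j = 2m$ is zero. Both will follow from a weight argument: since $X$ is proper, Deligne's Riemann hypothesis bounds the weights of $H^{2m-1}(X)$ by $2m-1$, whereas $H^{2m}_c(Z)$ is pure of weight $2m$, purity inherited from Lemma~\ref{hm-sheaf-calculation} via the Fourier comparison of Lemma~\ref{Fourier-comparison}; any Frobenius-equivariant morphism must respect weights, so $H^{2m-1}(X) \to H^{2m}_c(Z)$ vanishes. For $H^{2m-1}(X) = 0$ itself I would combine this with weak Lefschetz for $X \subset \mathbb P^{2m-1}$ as a complete intersection, which together with Poincaré duality expresses $H^{2m-1}(X)$ in terms of $H^{2m-1-2n}(\mathbb P^{2m-1})$, an odd-degree cohomology group of projective space that vanishes. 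Once both are in place, the Gysin sequence at $j = 2m$ collapses to
\[
0 \to H^{2m}_c(Z) \to H^{2m-2}(X)(-1) \to H^{2m}(X) \to 0
\]
with $H^{2m}(X) = \mathbb Q_\ell(-m)$ from the induction; twisting by $\mathbb Q_\ell(1)$ exhibits $H^{2m-2}(X)$ as the claimed extension of $\mathbb Q_\ell(1-m)$ by $\Hom_{GL_{n-1}}(V^{\otimes m}, V^{\otimes m})(1-n-m)$, and the vanishing of $H^j(X)$ for $j > 4m-2n-2$ is immediate from the dimension bound.
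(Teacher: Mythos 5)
Your overall framework is the same as the paper's: realize $Z_{n,m,m}$ as the affine cone over $X_{2,n,m}$, pass to the punctured cone, and extract a Gysin-type long exact sequence (which the paper obtains via the Leray spectral sequence for the $\mathbb G_m$-torsor $Z\setminus\{0\}\to X$). The descending induction on even degrees and the cup-product isomorphisms in the range where $H^*_c(Z)$ vanishes are also consistent with the paper's argument. However, there is a genuine gap in your treatment of the base case for odd degrees.

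You propose to prove $H^{2m-1}(X_{2,n,m}) = 0$ by combining weak Lefschetz with Poincar\'e duality, expressing $H^{2m-1}(X)$ in terms of $H^{2m-1-2n}(\mathbb P^{2m-1})$. This requires Poincar\'e duality on $X$, which in turn requires $X$ to be smooth --- and $X$ is not smooth here. Indeed, if $X$ were smooth, the very same Lefschetz--duality argument applied in degree $2m-2$ would identify $H^{2m-2}(X)$ with a Tate twist of $H^{2m-2n}(\mathbb P^{2m-1})$, which is one-dimensional; but the proposition you are trying to prove says $H^{2m-2}(X)$ contains a copy of $\operatorname{Hom}_{GL_{n-1}}(V^{\otimes m}, V^{\otimes m})$, which for $n\geq 3$ and $m > n$ has dimension strictly greater than one. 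So smoothness would contradict the statement itself. The weight argument you give for the vanishing of the map $H^{2m-1}(X)\to H^{2m}_c(Z)$ is sound, but it only gives the vanishing of that one edge map; it does not establish $H^{2m-1}(X) = 0$, which you still need for the odd-degree part of the proposition.

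The correct route, and the one the paper takes, is considerably more direct: apply the same long exact sequence at $j = 2m-1$. It sandwiches $H^{2m-1}_c(X)(-1)$ between $H^{2m+1}_c(Z\setminus\{0\})$ and $H^{2m+1}_c(X)$. The left term vanishes by the preceding corollary (odd degree $> 2m$), and the right term vanishes by the descending induction (it is an odd degree $> j$). Hence $H^{2m-1}_c(X) = 0$, and more generally all odd-degree groups above $2m-2$ vanish degree by degree from the exact sequence, without ever invoking a cup-product chain, weak Lefschetz, or Poincar\'e duality. This also sidesteps the edge cases (such as $m = n+1$) where your proposed chain of Gysin isomorphisms breaks down because $H^{4m-2n}_c(Z)$ does not vanish.
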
 

This matches the description of \cite[Theorem A]{HastMatei} by a calculation in Schur-Weyl duality. 

\begin{proof} By comparing definitions, we see that $Z_{n,m,m}$ is the affine cone on $X_{2,n,m}$ (which is called $Y_{2,n,m}$ in \cite[\S2]{HastMatei}).  By excision, our calculations of the cohomology of $Z_{n,m,m}$ hold without modification for $Z_{n,m,m} \neq 0$.

Applying the Leray spectral sequence to the projection $Y_{2,n,m}- \{0\} \to X_{2,n,m}$, we obtain a long exact sequence (dropping subscripts for compactness) \[H^{j+1}_c ( X, \mathbb Q_\ell) \to H^{j+2}_c (Z-\{0\} , \mathbb Q_\ell) \to H^{j}_c(X, \mathbb Q_\ell(-1)) \to H^{j+2}_c (X, \mathbb Q_\ell) \to H^{j+3}_c(Z-\{0\} ,\mathbb Q_\ell).\] 

We verify our description of $H^j_c( X_{2,n,m},\mathbb Q_\ell)$ by descending induction on $j$, starting from $4m-2n-2$, which is twice the dimension of $X_{2,n,m}$ and beyond which we know its cohomolog vanishes. In particular, we may assume that the cohomologies of $X$ and $Z$ vanish in all odd degrees greater than $j$. Because of this, when $j$ is odd, the exact sequence reduces to \[ 0 \to H^{j}_c(X_{2,n,m}, \mathbb Q_\ell(-1)) \to 0,\] verifying the induction step, and when $j$ is even, it reduces to \[ 0 \to H^{j+2}_c (Z_{n,m,m}-\{0\} , \mathbb Q_\ell)(1) \to H^{j}_c(X_{2,n,m}, \mathbb Q_\ell) \to H^{j+2}_c (X_{2,n,m}, \mathbb Q_\ell)(1) \to 0,\] again quickly verifying the induction step.

\end{proof}

We sketch in addition how Hypothesis \ref{hypo} for $c=0$ and $\rp,\rm,n$ arbitrary could potentially be used to calculate the cohomology of $Z_{n,m,m}$ (and hence $X_{2,n,m})$ in degrees greater than $2m-n$. One uses Lemma~\ref{Fourier-comparison}, and expresses the right side by iterated excision as arising by a spectral sequence from the cohomologies of $\Prim_d$, $0 \leq d \leq n$, with cohomology in $ L_{\univ}^{\otimes m} \otimes L_{\univ}^{\vee \otimes m}$. One then decomposes into irreducible representations of $GL_{d-1}$ and discards those that do not satisfy the criterion of Hypothesis~\ref{hypo}. For the remainder, one can calculate the cohomology using Lemma~\ref{representation-generating-identity} to reduce the calculation to the cohomology of tensor products of wedge powers. Using Lemma~\ref{Fourier-comparison} again, we obtain the cohomology of a moduli space of tuples of polynomials of fixed degrees whose product is equal to another tuple of polynomials of fixed degrees. If the cohomology of these spaces can be calculated explicitly, then the cohomology of the Hast-Matei varieties can be calculated, conditionally on the hypothesis.

\end{document}